\documentclass{article}

\RequirePackage{amsthm,amsmath,amsfonts,amssymb}
\RequirePackage[numbers]{natbib}
\RequirePackage[colorlinks,citecolor=blue,urlcolor=blue]{hyperref}
\RequirePackage{graphicx}
\RequirePackage{enumitem}
\usepackage{mathrsfs}
\usepackage{graphicx}

\theoremstyle{plain}
\newtheorem{theorem}{Theorem}
\newtheorem{lemma}{Lemma}
\newtheorem{proposition}{Proposition}
\theoremstyle{remark}
\newtheorem{remark}{Remark}
\newtheorem{assumption}{Assumption}
\newtheorem{definition}{Definition}

\title{Stochastic Graphon Games:\\ 
II. The Linear-Quadratic Case}
\author{Alexander Aurell, 
Ren\'e Carmona,
Mathieu Lauri\`ere\\
Department of Operations Research and Financial Engineering,\\
  Princeton University, 
  Princeton, NJ 08544,\\
 \texttt{\{aaurell, rcarmona, lauriere\}@princeton.edu}}

\begin{document}
\maketitle

\begin{abstract}
In this paper, we analyze linear-quadratic stochastic differential games with a continuum of players interacting through graphon aggregates, each state being subject to idiosyncratic Brownian shocks. The major technical issue is the joint measurability of the player state trajectories with respect to samples and player labels, which is required to compute for example costs involving the graphon aggregate. To resolve this issue we set the game in a Fubini extension of a product probability space. We provide conditions under which the graphon aggregates are deterministic and the linear state equation is uniquely solvable for all players in the continuum. The Pontryagin maximum principle yields equilibrium conditions for the graphon game in the form of a forward-backward stochastic differential equation, for which we establish existence and uniqueness. We then study how graphon games approximate games with finitely many players over graphs with random weights. We illustrate some of the results with a numerical example.
\end{abstract}

Keywords (Primary): 91A15, 91A0, 60H10, 60H20               

Keywords (secondary): 60G15, 28E05                 

\emph{Keywords: Stochastic differential games, Continuum of players, Graphons, Fubini extensions, Exact law of large numbers}

\section{Introduction} 

Large systems suffer from a growth of complexity with system size that often makes them practically intractable. These challenges arise in control theory, game theory, as well as in many applications in applied mathematics, economics, and engineering. For instance~\cite{geanakoplos2014inflationary,miao2006competitive} consider macroeconomic models with a continuum of infinitesimally small firms or consumers subject to idiosyncratic random shocks. Mean field game (MFG) theory \citep{huang2006large,lasry2006jeux,lasry2006jeux2, carmona2018probabilistic, carmona2018probabilistic2} is a paradigm aimed at the characterization of equilibria in games with a very large number of players. Typically, the game, with a large but finite number of players, is approximated by a limit found by letting the number of players grow to infinity. Much of the MFG theory assumes that players react to one and the same distributional property of the whole population, for example the mean player state. As a resut, it can only approximate games with a high degree of symmetry among the players. 

The theory of graphons provides a framework for the study of very-large systems of agents whose interactions are not necessarily symmetric. See \citep{lovasz2012large} for an expos\'e of the theory. It provides a mathematically rigorous set-up for the analysis of limits of sequences of network games of increasing size. The graphon approximation of a network game is a limit of such a sequence, often referred to as a graphon game. 

Graphon games have recently gained an increasing interest motivated by the study of strategic decision problems on very large networks of non-homogeneous agents. Applications include telecommunications, social networks, electric grids, etc. Static graphon games have been studied in both deterministic and stochastic settings \citep{parise2019graphon, carmona2019stochastic}. 
In the dynamic setting, \citep{delarue2017mean} considers a MFG with an Erd\"os-Renyi graph replacing the standard MFG interaction (a complete graph with uniform weights). The papers \citep{caines2018graphon, caines2019graphon} study decentralized strategies in graphon games. Deterministic linear-quadratic graphon games were studied in \citep{gao2020lqg}. In a setting similar to that of graphon games, a graphon-based model for optimal network interaction was used in \citep{gao2017control, gao2018grapbon, gao2019graphon, gao2019optimal, gao2019spectral} to represent control problems on large networks.
All these works assume that the graph underpinning the network of interactions is dense in the sense that the number of edges (non trivial interactions) is of the same order as the number of vertices (agents). Special cases of sparse networks have been studied, see for example \citep{lacker2019large,feng2020linear,lacker2020case}. Obviously, graphon theory cannot be used in these cases.

In this paper, we study a dynamic, linear-quadratic, stochastic graphon game. The states of a continuum of agents indexed by $I$, a closed bounded interval in $\mathbb{R}$, evolve on the real line while interacting. The agents' state trajectories are given by the following system: for $x\in I$ and $t\in [0,T]$,
\begin{equation}
\label{eq:intro-eq}
    dX^x_t = \Big(a(x)X^x_t + b(x)\alpha^x_t + c(x) \int_I w(x,y)X^y_t \lambda(dy)\Big)dt + dB^x_t,\ X^x_0 = \xi^x,
\end{equation}
where $w(\cdot,\cdot)$ is a bounded graphon, $\lambda$ is a probability measure extending the normalized Lebesgue measure $\lambda_I$ over $I$, $\alpha^x$ is the strategy chosen by player $x$, and $(B^x)_{x\in I}$ are independent standard Brownian motions. The integral $\int_I w(x,y) X^y_t\lambda(dy)$ is the population's influence on player $x$'s state, an aggregate weighted by the graphon. The necessity to integrate with respect to the extending measure $\lambda$ comes from a deep measurability issue connected to the construction of a continuum family of Brownian motions which we would like to be independent of each other. In the macroeconomic literature, it is well known that dealing with a continuum of agents affected by idiosyncratic shocks poses technical challenges related to measurability issues which get in the way of a desirable law of large numbers, see \textit{e.g.},~\cite{judd1985law,bewley1986stationary}. To cope rigorously with these issues, several ways have been investigated. The notion of Fubini extension was introduced to allow for an \emph{Exact Law of Large Numbers} (ELLN) and \citep{sun2006exact} can be viewed as a way to justify the second approach proposed in~\cite{feldman1985expository}. 
At first glance, a natural way to define the aggregate might be $\int_I w(x,y)X^y_t dy$. This is however not well defined in a standard probabilistic setting: in the usual product space carrying a continuum of independent Brownian motions $(B^x)_{x\in I}$ the process $(\omega,x) \mapsto B^x(\omega)$ is not jointly measurable (in the product space of the usual continuum product and the Lebesgue space over the index set), see the references in \citep{sun1998theory}. One way of resolving the measurability issue is to set the model in a Fubini extension of the usual product of the sample probability space and an atomless index probability space $(I,\mathcal{I},\lambda)$ extending the normalized Lebesgue probability space. The theory developed in  \citep{sun2006exact,sun2009individual, podczeck2010existence} grants existence of a probability space $(\Omega\times I, \mathcal{F}\boxtimes \mathcal{I}, \mathbb{P}\boxtimes \lambda)$, called a Fubini extension of the product space, carrying a  collection of essentially pairwise independent (e.p.i.) Brownian motions $(B^x)_{x\in I}$ with sufficient joint measurability (in the extension) for the aggregate to be well defined. By e.p.i., we mean that for $\lambda$-a.e. $x\in I$, $B^x$ is independent of $B^y$ for $\lambda$-a.e. $y\in I$. Moreover, Fubini's theorem for iterated integrals holds in the Fubini extension. Some recent game-theoretical models considering a continuum of agents in a Fubini extension are the rank-based reward models \citep{nutz2018mean, MR4068311, nutz2019mean, yu2020teamwise}, the static graphon game in \citep{carmona2019stochastic}, and the model of \citep{sun2015pure}. 

Our first goal is a rigorous analysis of the system \eqref{eq:intro-eq}. For a strategy profile $(\alpha^x)_{x\in I}$ from an admissible set, essentially defined as the decentralized controls, we show that \eqref{eq:intro-eq} admits a solution well-defined for all $x\in I$ and that the aggregate is a deterministic function of the agent label $x\in I$ and time $t\in[0,T]$. Graphon systems with deterministic aggregates have recently been studied in \citep{bayraktar2020graphon, bayraktar2020stationarity} and are also a feature of the graphon game models cited above. To the best of our knowledge, this paper and \cite{carmona2019stochastic} are the only ones treating stochastic graphon games that do not \textit{a priori} assume the aggregate to be deterministic. As a consequence, our proofs build on the following idea: identify the aggregate as a fixed point argument in an $L^2$-space of random variables on the Fubini extension, and subsequently show that the fixed point must be constant in the sample variable $\omega\in\Omega$. To prove the second step we use the Exact Law of Large Numbers (ELLN) \citep{sun2006exact}. In contrast, the theoretical works that \textit{a priori} assume deterministic aggregates must find them as a fixed point in a space of measure-valued processes.

Next, we solve the linear-quadratic graphon game. In doing so, the deterministic nature of the aggregate turns out to be of paramount importance. In the game, the optimization problem of player $x$ seeking their best response to $\lambda$-a.e. other player following a given strategy profile can be rephrased as a search for their best response to the deterministic aggregate trajectory $\int_I w(x,y)\mathbb{E}[X^y_\cdot]\lambda(dy)$. We derive optimality conditions for the game with a stochastic Pontryagin's type maximum principle. The following forward-backward stochastic differential equation (FBSDE) system, also called the Hamiltonian system, appears in the optimality condition for the equilibrium strategy profile $\underline{\hat{\alpha}} = (\hat{\alpha}^x)_{x\in I}$:
\begin{equation} 
\label{eq:intro2}
\begin{aligned}
&d\hat{X}^x_t 
= 
\partial_p H^x(t,\hat{X}^x_t, \hat{\alpha}^x_t, p_t)dt + dB^x_t,& &\hat{X}_0^x = \xi^x,
\\
&dp^x_t 
= 
-\partial_\chi H^x(t,\hat{X}^x_t, \hat{\alpha}^x_t, p_t)dt + q^x_tdB^x_t,& &p^x_T = \partial_\chi h^x(\hat{X}_T^x, \hat{Z}^x_T),\quad t\in [0,T], x\in I.
\end{aligned}
\end{equation}
We show that it is well-defined: for any finite time horizon $T$ there is a unique solution that solves the Hamiltonian system for all $x\in I$. In \eqref{eq:intro2} $H^x$ and $h^x$ are the Hamiltonian and terminal cost of player $x$, respectively. With a solution of \eqref{eq:intro2} at hand, the players can construct an admissible strategy profile of decentralized controls which is a Nash equilibrium to the graphon game. Under assumptions on the convexity of $H^x$ and $h^x$, $x\in I$, the Nash equilibrium is unique.

Many applications of interest involve models with finitely many players. Nevertheless, the search for Nash equilibria is most often prohibitive for all practical purposes, justifying the quest for approximation results similar to those in the MFG theory. We find a class of $N$-player games over fully connected interaction networks with random weights for which the graphon game provides an approximation and the graphon game equilibrium is the limit behavior. In these finite player games, the aggregate for player $k$ is the sum $\frac{1}{N}\sum_{\ell=1}^N w(i_k,i_\ell)X^{\ell,N}_t$ where $i_1,i_2,\dots$ are randomly sampled from $[0,1]$ (according to some distribution). In general, the search for approximate equilibria for such $N$-player games is outside the scope of classical MFG theory (except for the piecewise constant graphon). More specifically, we show that a graphon game Nash equilibrium is an approximate Nash equilibrium for a $N$-player game as described above. In the other direction, Nash equilibria of a sequence of such finite player games converges to a graphon game equilibrium as $N\rightarrow \infty$. Under a continuity assumption on the graphon, we use the the Law of Iterated Logarithms for Banach space valued random variables to show that the rate of convergence and  the approximation error are both of order almost $1/\sqrt{N}$ with probability $1$ with the Law of Iterated Logarithms for Banach space valued random variables.

The paper is structured as follows. In section~\ref{sec:prels} we introduce the notation and the necessary background on graphons and Fubini extensions. 
We analyze equations \eqref{eq:intro-eq} and \eqref{eq:intro2} in Section~\ref{sec:graphon-games}. The  convergence results can be found in Section~\ref{sec:convergence}. Section~\ref{sec:examples} treats a special case of the game for which the solution can be computed semi-explicitly, and presents some solved examples. Most proofs are deferred to the appendix.

\section{Preliminaries}
\label{sec:prels}

\subsection{The graphon}
Let $I\subset \mathbb{R}$ be a closed bounded interval.  The set $I$ is an index set labeling a continuum of agents. The Lebesgue space over $I$ is denoted by $(I,\mathcal{B}_I, \lambda_I)$ and $L^2(I)$ denotes the space of $\lambda_I$-equivalence classes of functions $X : I \rightarrow \mathbb{R}$ such that  $\int_I|X(x)|^2\lambda_I(dx) < \infty$. 
Hereafter, for a function $X : I \rightarrow \mathbb{R}$, we will often use the notation $X^x = X(x)$ for $x \in I$, and sometimes write $X^\cdot = X(\cdot)$ to stress the fact that $X$ is a function of the index. When the context is clear the dot is omitted.

A graphon is a symmetric, measurable function $w: I\times I \rightarrow [0,1]$. It defines an integral operator $W: L^2(I) \rightarrow L^2(I)$:
$$
[Wf](x) = \langle w(x,\cdot), f\rangle_{\lambda_I} := \int_I w(x,y)f(y)\lambda_I(dy),\quad x\in I,\ f\in L^2(I).
$$
The operator $W$ is a symmetric Hilbert-Schmidt operator: there exists an orthonormal basis in $L^2(I)$ of eigenfunctions $\{\varphi_i\}_{i=1}^\infty$ of $W$ such that the eigenvalues $\lambda_k$ are real and square summable and:
\begin{equation}
\label{eq:graphon_decomp}
    \left[Wf\right](x) 
    = \sum_{k=1}^\infty \lambda_k\varphi_k(x)\langle f, \varphi_k \rangle_{\lambda_I},\quad f\in L^2(I), \;\lambda_I\text{-a.e.}\;\; x\in I,
\end{equation}
and the Hilbert-Schmidt norm of $W$ is given by $\|W\|_2 = \|w\|_{L^2(I\times I)}=[\sum_{k\ge 1}\lambda_k^2]^{1/2}$. A standard reference for graphon theory is Lov\'asz's book \citep{lovasz2012large}.

\subsection{Fubini extensions}
\label{sec:fub}
Much of the analysis in this paper relies on the existence of an uncountable collection of essentially pairwise independent (e.p.i.) random variables jointly measurable in the sample $\omega\in\Omega$ and the label  $x\in I$. The theory of Fubini extensions was developed to facilitate this existence. We recite its formal definition here. 
\begin{definition}
\label{def:fub}
If $(\Omega, \mathcal{F}, \mathbb{P})$ and $(I, \mathcal{I}, \lambda)$ are probability spaces, a probability space $(\Omega\times I, \mathcal{W}, \mathbb{Q})$ extending the usual product space $(\Omega\times I, \mathcal{F}\otimes \mathcal{I}, \mathbb{P}\otimes \lambda)$ is said to be a Fubini extension if for any real-valued $\mathbb{Q}$-integrable function $f$ on $(\Omega\times I, \mathcal{W})$
\begin{enumerate}[label={(\roman*)}]
    \item\label{def:Fubini-i} the two functions $f_x : \omega \mapsto f(\omega,x)$ and $f_\omega : x \mapsto f(\omega,x)$ are integrable, respectively, on $(\Omega, \mathcal{F},\mathbb{P})$ for $\lambda$-a.e. $x\in I$, and on $(I,\mathcal{I},\lambda)$ for $\mathbb{P}$-a.e. $\omega\in \Omega$;
    \item\label{def:Fubini-ii} $\int_\Omega f_x(\omega) d\mathbb{P}$ and $\int_If_\omega(x)d\lambda(x)$ are integrable, respectively, on $(I,\mathcal{I},\lambda)$ and $(\Omega, \mathcal{F},\mathbb{P})$, with $\int_{\Omega\times I}f(\omega,x)d\mathbb{Q}(\omega,x) = \int_I\left(\int_\Omega f_x(\omega)d\mathbb{P}(\omega)\right)d\lambda(x) = \int_\Omega\left(\int_If_\omega(x)d\lambda(x)\right)d\mathbb{P}(\omega)$.
\end{enumerate}
\end{definition}

\begin{remark}
\label{re:Bochner}
Given a Fubini extension as above, if $E$ is a separable Banach space and $f$ is a strongly measurable $E$-valued function on $(\Omega\times I, \mathcal{W})$, then properties \ref{def:Fubini-i} and \ref{def:Fubini-ii} of Definition \ref{def:fub} still hold as long as we interpret measurability as strong measurability and integrability in the sense of Bochner integrals. This claim is an immediate consequence of the fact that an $E$-valued function $\varphi$ on a measure space $(M,\mathcal{M})$ is strongly measurable if and only if for each element $e^*$ of the dual $E^*$ of $E$, the real valued function $M\ni m\mapsto \langle e^*,\varphi(m)\rangle \in\mathbb{R}$ is measurable, and if $\mu$ is a measure on $(M,\mathcal{M})$, the Bochner integral $\int_M\varphi(m)\mu(dm)$ is characterized, whenever it exists as an element of $E$, by
$$
\langle e^*,\int_M\varphi(m)\mu(dm)\rangle=\int_M\langle e^*,\varphi(m) \rangle\mu(dm),\qquad e^*\in E^*.
$$
\end{remark}

In the next theorem we recite some results on the existence of a Fubini extension carrying a continuum of e.p.i. jointly measurable random variables. It introduces a particular sample space, $(\Omega,\mathcal{F}, \mathbb{P})$, an index space $(I, \mathcal{I}, \lambda)$, and a Fubini extension of their product.
See \citep[Prop. 5.6]{sun2006exact} for the construction of the sample space.
The index space is the extension of $(I,\mathcal{B}_I, \lambda_I)$ explicitly constructed in \citep{sun2009individual}, where we note that $\mathcal{I}$ is non-countably generated. 
The following existence result is taken from \citep[Thm. 1]{sun2009individual}.

\begin{theorem}
\label{thm:existence}
Let $I$ be the unit interval and $S$ a Polish space. There exists a probability space $(I,\mathcal{I}, \mathcal{\lambda})$ extending $(I,\mathcal{B}_I,\lambda_I)$, a probability space $(\Omega,\mathcal{F},\mathbb{P})$, and a Fubini extension $(\Omega\times I, \mathcal{F}\boxtimes \mathcal{I},\mathbb{P}\boxtimes \lambda)$ of $(\Omega\times I, \mathcal{F}\otimes \mathcal{I},\mathbb{P}\otimes \lambda)$ such that for any measurable mapping $\varphi$ from $(I,\mathcal{I},\lambda)$ to $\mathcal{P}(S)$, the set of Borel probability measures on $S$, there is an $\mathcal{F}\boxtimes\mathcal{I}$-measurable process $f : \Omega\times I \rightarrow S$ such that the random variables $f_x = f(\cdot,x)$ are e.p.i. and $\mathbb{P}\circ f^{-1}_x = \varphi(x)$ for
$x\in I$.
\end{theorem}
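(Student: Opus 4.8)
The plan is to reduce the theorem to a single \emph{universal} construction and then obtain the prescribed marginals $\varphi(x)$ by a parametrized pushforward. Concretely, I would first secure the existence of a Fubini extension $(\Omega\times I,\mathcal{F}\boxtimes\mathcal{I},\mathbb{P}\boxtimes\lambda)$ carrying a single $\mathcal{F}\boxtimes\mathcal{I}$-measurable process $g:\Omega\times I\to[0,1]$ whose sections $g_x=g(\cdot,x)$ are essentially pairwise independent and uniformly distributed on $[0,1]$. Granting such a $g$, the general claim follows: given a measurable $\varphi:I\to\mathcal{P}(S)$, build a jointly measurable map $\Phi:I\times[0,1]\to S$ with $\Phi(x,\cdot)$ transporting the uniform law to $\varphi(x)$, and set $f(\omega,x):=\Phi(x,g(\omega,x))$. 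Then $f$ is $\mathcal{F}\boxtimes\mathcal{I}$-measurable as a composition, its sections $f_x=\Phi(x,g_x)$ are measurable functions of the $g_x$ and hence remain essentially pairwise independent, and $\mathbb{P}\circ f_x^{-1}=\Phi(x,\cdot)\#\mathrm{Unif}=\varphi(x)$ by construction. Thus the substance of the theorem lies entirely in the universal step.

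For the pushforward step I would use that $S$, being Polish, is Borel isomorphic to a Borel subset of $[0,1]$, which reduces the construction of $\Phi$ to the real-valued case. There the quantile (generalized inverse c.d.f.) transform does the job: writing $F_x$ for the c.d.f. of $\varphi(x)$ pushed to $[0,1]$ and $F_x^{-1}(u)=\inf\{s:F_x(s)\ge u\}$, one has $F_x^{-1}\#\mathrm{Unif}=\varphi(x)$, and the weak-topology measurability of $x\mapsto\varphi(x)$ makes $(x,u)\mapsto F_x^{-1}(u)$ jointly measurable by a measurable-selection argument, using that $\mathcal{P}(S)$ is itself Polish. Composing with the Borel isomorphism back into $S$ yields the desired $\Phi$, and the e.p.i. property is inherited because it is closed under $\lambda$-a.e. measurable transformation of the sections.

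The universal step is the genuine obstacle, and it cannot be carried out on a standard product space: on a countably generated index space an atomless, nondegenerate, jointly measurable, essentially pairwise independent family does not exist, which is precisely the measurability pathology recalled in the introduction \citep{sun1998theory}. My plan is to follow the nonstandard (Loeb-space) route behind \citep{sun2006exact,sun2009individual}. Start from internal hyperfinite probability spaces indexed by $\{1,\dots,M\}$ with $M$ an unlimited hyperinteger, form the associated Loeb spaces $(\Omega,\mathcal{F},\mathbb{P})$ and a Loeb index space, and take the Loeb measure of the \emph{internal} product. By Keisler's Fubini theorem this Loeb product is strictly finer than the product of the Loeb spaces yet still satisfies the iterated-integral identity of Definition~\ref{def:fub}, so it is a Fubini extension. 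An internal i.i.d. family of uniforms on the hyperfinite index set has, after passage to its Loeb-measurable standard part, sections that are essentially pairwise independent and uniform; the hyperfinite law of large numbers transfers to the exact law of large numbers, the measure-theoretic shadow of independence in this setting.

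Finally, to present the index space as an extension of $(I,\mathcal{B}_I,\lambda_I)$ rather than as an abstract Loeb space, I would transport the Loeb index space to $[0,1]$ through a measure isomorphism onto $[0,1]$ equipped with a measure $\lambda$ extending normalized Lebesgue measure. This transport is exactly what produces the non-countably-generated $\sigma$-algebra $\mathcal{I}$ flagged before the statement: the richness that defeats the no-go obstruction survives the identification. The remaining care lies in checking that the composed maps stay jointly measurable with respect to $\mathcal{F}\boxtimes\mathcal{I}$ rather than the coarser product $\sigma$-algebra, and in the null-set bookkeeping when invoking that functions of e.p.i. variables are e.p.i., since essential pairwise independence quantifies only over $\lambda$-a.e. pairs.
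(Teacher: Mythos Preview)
The paper does not prove this theorem. It is stated in Section~\ref{sec:fub} as a preliminary result and explicitly attributed: ``The following existence result is taken from \citep[Thm.~1]{sun2009individual}.'' There is therefore no proof in the paper against which to compare your proposal.

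That said, your sketch is broadly faithful to how the cited literature establishes the result. The reduction to a universal e.p.i.\ uniform family followed by a parametrized quantile pushforward is standard, and the Loeb-space construction you outline is indeed the mechanism behind \citep{sun2006exact}. One point of caution: the specific extension $(I,\mathcal{I},\lambda)$ of the Lebesgue unit interval used in \citep{sun2009individual} is not obtained by transporting a Loeb space via a measure isomorphism as you suggest in your final step; rather, Sun and Zhang give a direct set-theoretic construction on $[0,1]$ (enlarging $\mathcal{B}_I$ by adjoining suitable non-Borel sets) that yields a countably generated extension with the required ``richness'' properties. Your transport argument would produce \emph{some} extension of Lebesgue measure, but verifying that the transported $\sigma$-algebra actually contains $\mathcal{B}_I$ and that the Fubini-extension structure survives the identification is more delicate than you indicate. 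If you intend to write out a full proof rather than cite the result, you should consult \citep{sun2009individual} directly for that construction.
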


Let $T>0$ be a finite time horizon and let $E := C([0,T]; \mathbb{R})$ be the space of real-valued continuous functions on $[0,T]$ equipped with the topology of the uniform convergence. Let $S$ be the Polish space $S := E \times \mathbb{R}$, the Borel $\sigma$-field over $S$ is $\mathcal{B}(S) = \mathcal{B}(E)\otimes \mathcal{B}(\mathbb{R})$. We use the notation $\sigma=([\sigma]_1,[\sigma]_2)$ to emphasize the two components of $\sigma\in S$. For $x\in I$, let $\varphi(x)=\mu(x)\otimes\nu(x)$ where $\mu(x)$ is the Wiener measure on $E$ for each $x\in I$ and 
$\nu : I \to \mathcal{P}(\mathbb{R})$ is an $\mathcal{I}$-measurable function. The probability measure $\nu(x)$ models the initial probability distribution of player $x$'s state. By Theorem~\hyperref[thm:existence]{\ref{thm:existence}}, there exists a $\mathcal{F}\boxtimes \mathcal{I}$-measurable process $\mathbb{B} : \Omega\times I \rightarrow S$ with random variables $\mathbb{B}^x(\cdot) = (B^x(\cdot),\xi^x(\cdot))$ e.p.i. and such that 
the law of $(B^x,\xi^x)$ is $\mu(x) \otimes \nu(x)$ for all $x\in I$. 
We denote by $\mathbb{F}^x$ the filtration generated by $\mathbb{B}^x$.

We write $L^2_{\boxtimes}(\Omega\times I; E)$ for the space of equivalence classes of $(\mathcal{F}\boxtimes\mathcal{I}, \mathcal{B}(E))$-measurable functions which are $\mathbb{P}\boxtimes\lambda$-square integrable 
, \textit{i.e.}, $\varphi \in L^2_\boxtimes(\Omega\times I; E)$ if
$$
    \int_{\Omega \times I} \|\varphi^x(\omega)\|^2_E \mathbb{P} \boxtimes \lambda(d\omega, dx) < +\infty,
$$
and $L^2_\boxtimes(\Omega\times I)$ when $E=\mathbb{R}$.
We write $L^2_\lambda(I; E)$ and $L^2(I; E)$ for the space of equivalence classes of $(\mathcal{I}, \mathcal{B}(E))$ and $(\mathcal{B}(I), \mathcal{B}(E))$ measurable functions which are $\lambda$ and $\lambda_I$-square integrable, respectively. For any $\varphi \in L^2_{\boxtimes}(\Omega\times I)$, we use the notation:
$$
    \mathbb{E}^{\boxtimes}\left[\varphi\right] := \int_{\Omega\times I}\varphi^x(\omega) \mathbb{P}\boxtimes \lambda(d\omega,dx).
$$
Expectation without a superscript, $\mathbb{E}$, refers to the integral taken with respect to $\mathbb{P}$.

We extend the domain of the graphon operator to $L^2_\boxtimes(\Omega\times I)$:
\begin{equation}
\label{eq:w_on_X}
\begin{aligned}
    &W : L^2_\boxtimes(\Omega\times I) \ni X \mapsto [WX] : \Omega\times I \ni (\omega,x) \mapsto \int_I w(x,y)X^y(\omega)\lambda(dy) \in \mathbb{R}.
\end{aligned}
\end{equation}
Naturally, we inquire if $([WX_t])_{t\in[0,T]}$ is measurable when $X\in L^2_\boxtimes(\Omega\times I; E)$ if we understand $X_t$ as $\Omega\times I\ni (\omega,x)\mapsto X_t(\omega,x)=X^x(\omega)_t\in\mathbb{R}$.

\begin{lemma}
\label{lemma:extended-W}
If $X\in L^2_\boxtimes(\Omega\times I; E)$, 
\begin{enumerate}[label=(\roman*)]
    \item\label{lemma:extended-W-i} for $x\in I$ and $\mathbb{P}$-almost every $\omega\in\Omega$, the Bochner integral  $\int_I w(x,y)X^y(\omega)\lambda(dy)$ defines an element $[WX](x,\omega)$ of $E$, 
    \item\label{lemma:extended-W-ii} the mapping $I\times\Omega\ni(x,\omega)\mapsto [WX](x,\omega)$
is measurable with respect to the completion for $\lambda_I\otimes\mathbb{P}$ of the product $\sigma$-field $\mathcal{B}_I\otimes\mathcal{F}$, 
\item\label{lemma:extended-W-iii} $[WX]$ so defined provides an extension of the graphon operator $W$ to a bounded operator on $L^2_\boxtimes(\Omega\times I; E)$ of norm at most $1$.
\end{enumerate}
\end{lemma}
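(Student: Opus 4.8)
The plan is to prove \ref{lemma:extended-W-i} by a fibrewise argument and then obtain \ref{lemma:extended-W-ii} and \ref{lemma:extended-W-iii} together by approximating the graphon kernel by its spectral truncations from \eqref{eq:graphon_decomp}. For \ref{lemma:extended-W-i}, note first that since $\lambda$ is a probability measure, $X\in L^2_\boxtimes(\Omega\times I;E)\subset L^1_\boxtimes(\Omega\times I;E)$, so the Bochner form of the Fubini property (Remark~\ref{re:Bochner}) applies both to $X$ and to the real-valued map $(\omega,x)\mapsto\|X^x(\omega)\|_E^2$. It yields, for $\mathbb{P}$-a.e.\ $\omega$, that $y\mapsto X^y(\omega)$ is strongly $\mathcal{I}$-measurable with $\int_I\|X^y(\omega)\|_E^2\lambda(dy)<\infty$. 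Fixing $x\in I$ and such an $\omega$, the integrand $y\mapsto w(x,y)X^y(\omega)$ is strongly measurable, being the product of the $\mathcal{B}_I$-measurable scalar $w(x,\cdot)$ with a strongly measurable $E$-valued map; and because $0\le w\le 1$ and $\lambda$ is a probability measure, Cauchy--Schwarz gives $\int_I\|w(x,y)X^y(\omega)\|_E\lambda(dy)\le(\int_I\|X^y(\omega)\|_E^2\lambda(dy))^{1/2}<\infty$, so the Bochner integral $[WX](x,\omega)$ exists in $E$.

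For \ref{lemma:extended-W-ii} and \ref{lemma:extended-W-iii} I would exploit the decomposition \eqref{eq:graphon_decomp}. Set $w_n(x,y):=\sum_{k=1}^n\lambda_k\varphi_k(x)\varphi_k(y)$, which converges to $w$ in $L^2(I\times I)$, and put $Z_k(\omega):=\int_I\varphi_k(y)X^y(\omega)\lambda(dy)$. Since $(\omega,y)\mapsto\varphi_k(y)X^y(\omega)$ lies in $L^1_\boxtimes$ (by Cauchy--Schwarz, using $\|\varphi_k\|_{L^2(\lambda)}=\|\varphi_k\|_{L^2(\lambda_I)}=1$), Remark~\ref{re:Bochner} makes each $Z_k$ a well-defined $\mathcal{F}$-measurable $E$-valued map. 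Consequently the truncation
\[
[W_nX](x,\omega)=\int_I w_n(x,y)X^y(\omega)\lambda(dy)=\sum_{k=1}^n\lambda_k\varphi_k(x)Z_k(\omega)
\]
is a finite sum of products of an $\mathcal{I}$-measurable function of $x$ and an $\mathcal{F}$-measurable function of $\omega$; it is therefore both $\mathcal{B}_I\otimes\mathcal{F}$-measurable and $\mathcal{F}\boxtimes\mathcal{I}$-measurable, and the bound of \ref{lemma:extended-W-i} places it in $L^2_\boxtimes(\Omega\times I;E)$.

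The key estimate is that, for a.e.\ $\omega$ and every $x$,
\[
\|[WX](x,\omega)-[W_nX](x,\omega)\|_E\le\Big(\int_I|w-w_n|^2(x,y)\lambda(dy)\Big)^{1/2}\Big(\int_I\|X^y(\omega)\|_E^2\lambda(dy)\Big)^{1/2},
\]
again by Cauchy--Schwarz. Since $|w-w_n|^2$ is $\mathcal{B}_I\otimes\mathcal{B}_I$-measurable, its $\lambda$- and $\lambda_I$-integrals agree, and integrating the square gives a single bound $\le\|w-w_n\|_{L^2(I\times I)}\|X\|_{L^2_\boxtimes}\to0$ that is valid for the $L^2$ norm over $\lambda_I\otimes\mathbb{P}$ and, applied to the differences $[W_mX]-[W_nX]$, over $\mathbb{P}\boxtimes\lambda$ as well. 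Passing to a subsequence along which $[W_nX]\to[WX]$ almost everywhere for $\lambda_I\otimes\mathbb{P}$ exhibits $[WX]$ as measurable with respect to the completion of $\mathcal{B}_I\otimes\mathcal{F}$, proving \ref{lemma:extended-W-ii}. For \ref{lemma:extended-W-iii}, the same estimate shows $([W_nX])_n$ is Cauchy in the complete space $L^2_\boxtimes(\Omega\times I;E)$, and its limit agrees $\mathbb{P}\boxtimes\lambda$-a.e.\ with the pointwise integral $[WX]$ (matching the two limits along a common subsequence), so $[WX]\in L^2_\boxtimes$. The norm bound is then direct: since $0\le w\le 1$, $\|[WX](x,\omega)\|_E\le\int_I\|X^y(\omega)\|_E\lambda(dy)$ is independent of $x$, and Jensen together with the probability normalization of $\lambda$ give $\|[WX]\|_{L^2_\boxtimes}\le\|X\|_{L^2_\boxtimes}$.

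I expect the main obstacle to be the reconciliation of the two measure-theoretic frameworks in play: the ordinary product $(\mathcal{B}_I\otimes\mathcal{F},\lambda_I\otimes\mathbb{P})$ appearing in \ref{lemma:extended-W-ii} and the Fubini extension $(\mathcal{F}\boxtimes\mathcal{I},\mathbb{P}\boxtimes\lambda)$ needed for \ref{lemma:extended-W-iii}. The bridge is that $\mathbb{P}\boxtimes\lambda$ restricts to $\mathbb{P}\otimes\lambda_I$ on $\mathcal{F}\otimes\mathcal{B}_I$ and that the truncation error is measured by $\|w-w_n\|_{L^2(I\times I)}$ on $\mathcal{B}_I\otimes\mathcal{B}_I$, where $\lambda$ and $\lambda_I$ coincide; this lets one $L^2$ estimate serve both topologies and lets a single subsequence deliver almost-everywhere convergence for both measures, so that the pointwise object of \ref{lemma:extended-W-i} is correctly identified with the $L^2_\boxtimes$ limit.
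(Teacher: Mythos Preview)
Your proof is correct and runs parallel to the paper's, with a cleaner execution of step~(ii). For~(i) both arguments use the Fubini extension property together with $|w|\le 1$; you slice in $\omega$ first while the paper slices in $x$, to the same effect. For~(ii) the paper approximates $w$ by generic finite tensor sums $\sum_i\varphi_i(x)\psi_i(y)$ and simultaneously approximates $X$ by $\mathcal{F}\boxtimes\mathcal{I}$-simple functions, then asserts without detail that the Bochner integral inherits $\mathcal{B}_I\otimes\mathcal{F}$-measurability in the limit. Your spectral truncations $w_n$ from~\eqref{eq:graphon_decomp} are a concrete instance of that tensor approximation; the payoff is the explicit Cauchy--Schwarz bound $\|w-w_n\|_{L^2(I\times I)}\|X\|_{L^2_\boxtimes}$, which gives $L^2$ convergence directly and makes the passage to the completion transparent, without needing to approximate $X$ at all. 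The paper's version, in exchange, does not invoke the Hilbert--Schmidt structure and would work for any bounded measurable kernel. For~(iii) both arguments finish with the same norm inequality; your Cauchy-sequence identification is a little more than necessary once~(ii) is established (null sets for $\lambda_I\otimes\mathbb{P}$ on $\mathcal{B}_I\otimes\mathcal{F}$ are already $\mathbb{P}\boxtimes\lambda$-null), but it does no harm. One small slip: the key estimate holds for $\lambda_I$-a.e.\ $x$ rather than \emph{every} $x$, since the eigenfunctions $\varphi_k$ are only $L^2$ representatives; this is immaterial for the integrated bounds you use.
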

Notice that Remark \ref{re:Bochner} implies that $[WX](x,\omega)_t=[WX_t](x,\omega)$ as defined by formula \eqref{eq:w_on_X}.
\begin{proof}
If $x\in I$ is fixed, $\|w(x,y)X^y(\omega)\|_E\le \|X^y(\omega)\|_E$ which is $\mathbb{P}\boxtimes \lambda$-integrable. Moreover, $x$ being fixed, as a function of $(\omega,y)$, $w(x,y)X^y(\omega)$
is $\mathcal{F}\boxtimes\mathcal{I}$-measurable so that the variation of the definition of Fubini extension given in Remark \ref{re:Bochner} implies that the Bochner integral  $\int_I w(x,y)X^y(\omega)\lambda(dy)$ defines an element $[WX](x,\omega)$ of $E$ which happens to be a $\mathcal{F}$-measurable function of $\omega\in\Omega$.

Using a sequence of functions of the form $(x,y)\mapsto\sum_{1\le i\le k}\varphi_i(x)\psi_i(y)$
where $k$ is an integer and $\varphi_i$ and $\psi_i$ are $\mathcal{B}_I$-measurable real valued functions to approximate $w(x,y)$ almost everywhere on $I\times I$ for the Lebesgue measure, and functions of the form $\sum_{1\le i\le k}{\bf 1}_{U_i}(\omega,y)e_i$ for some $U_i\in\mathcal{F}\boxtimes\mathcal{I}$ and $e_i\in E$, to approximate $X^y(\omega)$  $\mathbb{P}\boxtimes\lambda$-a.s., we see that $[WX](x,\omega)$ is in fact almost surely equal to a $\mathcal{B}_I\otimes\mathcal{F}$-measurable function as claimed in \ref{lemma:extended-W-ii}.

As for \ref{lemma:extended-W-iii}, it follows from the following inequalities:
\begin{equation*}
    \begin{split}
        \int_{\Omega\times I}\|[WX](x,\omega)\|^2_E\;\mathbb{P}\boxtimes \lambda(d\omega,dx)
        &=\int_{\Omega\times I}\Bigl\|\int_I w(x,y)X^y(\omega)\lambda(dy)\Bigr\|^2_E\;\mathbb{P}\boxtimes \lambda(d\omega,dx)\\
        &\le \int_{\Omega\times I}\Bigl(\int_I \|X^y(\omega)\|^2_E\lambda(dy)\Bigr)\;\mathbb{P}\boxtimes \lambda(d\omega,dx)\\
        &= \int_{\Omega\times I} \|X^y(\omega)\|^2_E\mathbb{P}\boxtimes \lambda(d\omega,dy)
    \end{split}
\end{equation*}
where we used once more the defining property of a Fubini extension.
\end{proof}

We conclude this section with a note about the Brownian motion in the Fubini extension. As an $E$-valued Gaussian vector, $B$ has exponential moments, so $B \in L^2_{\boxtimes}(\Omega\times I; E)$. Furthermore for each $t\in [0,T]$, the (weak) Exact Law of Large Numbers (ELLN) \citep[Corollary 2.10]{sun2006exact} applies to $B_t$ since $(B_t^x)_{x\in I}$ are e.p.i. and $B_t$ is $\mathbb{P}\boxtimes\lambda$-integrable. By the ELLN, for each $A\in\mathcal{I}$ and $t\in[0,T]$ we have:
\begin{equation*}
\int_A B^x_t (\omega)\lambda(dx) = \int_A \mathbb{E}[B_t^x]\lambda(dx) = 0,\quad \mathbb{P}\text{-a.s}.
\end{equation*}

\section{Linear-quadratic stochastic graphon games}
\label{sec:graphon-games}

 In this section, we introduce a game with a continuum of players indexed by $I$. Each player's goal is to choose the best  admissible strategy. Below we define the notion of admissibility, formalize the players' state dynamics and costs/rewards. We postpone the specification of what is meant by "best strategy", and its computation, to Sections \ref{sec:pmp} and \ref{sec:FBSDE}.

\subsection{Linear graphon dynamics}

A family $(\alpha^x)_{x\in I}$ of $\mathbb{F}^x$-progressively measurable real-valued processes is called a strategy profile.
\begin{definition}
Let $\underline{\mathcal{A}}$ be the set of real-valued, $\mathcal{I}\otimes \mathcal{B}([0,T]) \otimes \mathcal{B}(S)$-measurable functions $\underline\alpha$ on $I\times [0,T]\times S$ satisfying
\begin{itemize}
    \item[(progressive)] for every $(x,t) \in I\times [0,T]$, $\underline\alpha(x,t,\sigma) = \underline\alpha(x,t,\sigma')$ if $[\sigma]_1(s) = [\sigma']_1(s)$, $0\leq s\leq t$ and $[\sigma]_2 = [\sigma']_2$;
    \item[(integrability)] $\mathbb{E}^\boxtimes\left[\int_0^T|\underline{\alpha}(\cdot,t, \mathbb{B}^\cdot)|^2dt\right]<\infty$ and, for all $x\in I$, $\mathbb{E}[\int_0^T |\underline\alpha(x,t,\mathbb{B}^x)|^2dt]<\infty$. 
\end{itemize}
\end{definition}
The strategy profile $(\alpha^x)_{x\in I}$ is called admissible if there is an $\underline\alpha \in \underline{\mathcal{A}}$ such that $\alpha^x_\cdot = \underline\alpha(x, \cdot, \mathbb{B}^x)$ for $x\in I$.
With some abuse of notation, we will write $(\alpha^x)_{x\in I} = \underline\alpha \in \underline{\mathcal{A}}$ when $(\alpha^x)_{x\in I}$ is admissible. 

For each $x\in I$, we define $\mathcal{A}(x)$ as the set of $\mathbb{F}^x$-progressively measurable square-integrable processes $(\alpha_t)_{0\le t\le T}$, \textit{i.e.}, satisfying $\mathbb{E}\int_0^T|\alpha_t|^2dt < \infty$. When the player population plays an admissible strategy profile, the strategy of player $x\in I$ is an element of $\mathcal{A}(x)$. Furthermore, player $x$ can switch their strategy to any $\beta\in \mathcal{A}(x)$ without affecting the overall $\mathcal{I}\otimes\mathcal{B}([0,T])\otimes\mathcal{B}(S)$-measurability of the strategy profile.

The next assumption is in force throughout the rest of the paper.

\begin{assumption}
\label{assump:sec3.1}
\begin{enumerate}[label=(\roman*)]
    \item 
    \label{assump:sec3.1-i}
    $\nu(x)$ has a finite second moment uniformly bounded in $x$
    \item 
    \label{assump:sec3.1-ii}
    The coefficient functions $a,b,c : I \rightarrow \mathbb{R}$ are $\mathcal{I}$-measurable and bounded
\end{enumerate} 
\end{assumption}

The rest of this section is devoted to the existence and uniqueness of a solution to the graphon SDE system
\begin{equation}
\label{eq:linear_dynamics_intro}
    dX_t^{\underline{\alpha},x} = \left(a(x)X_t^{\underline{\alpha},x}+b(x)\alpha^x_t + c(x)Z^{\underline{\alpha},x}_t\right)dt + dB^x_t,\ t\in[0,T], \quad X_0^{\underline{\alpha},x} = \xi^x,
\end{equation}
 defined for all $x\in I$, where $\alpha^x_\cdot = \underline{\alpha}(x, \cdot, \mathbb{B}^x)$ for a given $\underline{\alpha}\in \underline{\mathcal{A}}$. $Z^{\underline{\alpha},x}$ is the graphon-weighted aggregate of the continuum of player states, defined in line with aggregates in economic theory, and for now only formally, as:
\begin{equation}
\label{eq:definition_of_Z}
Z^{\underline{\alpha},x}_t := \int_I w(x,y)X^{\underline{\alpha},y}_t\lambda(dy) = [WX^{\underline{\alpha},\cdot}_t](x),\quad t\in[0,T].
\end{equation}

\begin{lemma}
\label{lemma:well-posedness-of-X}
If $z\in L^2_{\boxtimes}(\Omega\times I; E)$ and $\underline{\alpha}\in\underline{\mathcal{A}}$, the stochastic integral equation
\begin{equation}
\label{eq:lemma2-state-eq}
\mathbb{X}_t^{\underline\alpha,z,\cdot} = \mathbb{X}^{\underline\alpha,z,\cdot}_0 + \int_0^t\left(a(\cdot)\mathbb{X}_s^{\underline\alpha,z,\cdot} + b(\cdot)\alpha^\cdot_s + c(\cdot)z_s(\cdot)\right)ds + B^\cdot_t,\ t\in[0,T], \quad \mathbb{X}^{\underline\alpha,z,\cdot}_0 = \xi^{\cdot}
\end{equation}
has a unique solution $\mathbb{X}^{\underline\alpha,z}_\cdot=(\mathbb{X}_\cdot^{\underline\alpha,z,x}(\omega); (\omega,x)\in \Omega\times I)$ in $L^2_\boxtimes(\Omega\times I; E)$.
\end{lemma}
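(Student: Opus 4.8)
The plan is to exploit the fact that, because the aggregate input $z$ is \emph{frozen} (given), the system \eqref{eq:lemma2-state-eq} decouples across the labels $x\in I$: for each fixed $x$ it is a scalar linear equation with additive noise $B^x$, deterministic drift coefficient $a(x)$, and a given forcing $b(x)\alpha^x + c(x)z^x$. Rather than solving these equations label-by-label and then struggling to reassemble a jointly measurable object, I would treat the whole family at once as a fixed-point problem on the complete space $L^2_\boxtimes(\Omega\times I; E)$. Define the map $\Phi$ by
\[
\Phi(\mathbb{X})^x_t := \xi^x + \int_0^t\bigl(a(x)\mathbb{X}^x_s + b(x)\alpha^x_s + c(x)z^x_s\bigr)\,ds + B^x_t,\qquad t\in[0,T],
\]
so that a fixed point of $\Phi$ in $L^2_\boxtimes(\Omega\times I; E)$ is exactly a solution of \eqref{eq:lemma2-state-eq}, and conversely.

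First I would check that $\Phi$ maps $L^2_\boxtimes(\Omega\times I; E)$ into itself. For the measurability, note that $B=[\mathbb{B}]_1$ and $\xi=[\mathbb{B}]_2$ are $\mathcal{F}\boxtimes\mathcal{I}$-measurable by construction, that $a,b,c$ are $\mathcal{I}$-measurable and bounded by Assumption~\ref{assump:sec3.1}\ref{assump:sec3.1-ii}, and that the time-integration operator $J:f\mapsto\left(t\mapsto\int_0^t f(s)\,ds\right)$ is a bounded linear map $L^2([0,T])\to E$ (and $E\to E$); since composition with a continuous linear operator and multiplication by an $\mathcal{I}$-measurable scalar both preserve strong measurability, each summand of $\Phi(\mathbb{X})$ is strongly measurable into $E$. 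For the integrability I would bound, $\mathbb{P}\boxtimes\lambda$-pointwise, $\|\Phi(\mathbb{X})^x\|_E$ by $\|B^x\|_E + |\xi^x| + |a|_\infty T\|\mathbb{X}^x\|_E + |b|_\infty\sqrt{T}\,(\int_0^T|\alpha^x_s|^2ds)^{1/2} + |c|_\infty T\,\|z^x\|_E$, using boundedness of the coefficients and Cauchy--Schwarz. Taking $\mathbb{E}^\boxtimes[\,\cdot\,]$ and splitting it as $\int_I\mathbb{E}[\,\cdot\,]\lambda(dx)$ via the Fubini property, the five contributions are controlled respectively by $\mathbb{E}^\boxtimes[\|B\|_E^2]<\infty$ (shown at the end of Section~\ref{sec:fub}), Assumption~\ref{assump:sec3.1}\ref{assump:sec3.1-i} for $\xi$, the hypothesis $\mathbb{X}\in L^2_\boxtimes$, the admissibility bound on $\underline\alpha$, and $z\in L^2_\boxtimes(\Omega\times I; E)$.

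Next I would establish the contraction estimate. Since $\Phi(\mathbb{X})-\Phi(\mathbb{X}') = \int_0^\cdot a(\cdot)(\mathbb{X}_s-\mathbb{X}'_s)\,ds$ pathwise, iterating the time integral and using $|a(x)|\le|a|_\infty$ yields, for the $n$-fold composite, the pathwise bound $\|\Phi^n(\mathbb{X})^x - \Phi^n(\mathbb{X}')^x\|_E \le \frac{(|a|_\infty T)^n}{n!}\|\mathbb{X}^x-\mathbb{X}'^x\|_E$, hence $\|\Phi^n(\mathbb{X})-\Phi^n(\mathbb{X}')\|_{L^2_\boxtimes}\le \frac{(|a|_\infty T)^n}{n!}\|\mathbb{X}-\mathbb{X}'\|_{L^2_\boxtimes}$. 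As $(|a|_\infty T)^n/n!\to 0$, some iterate $\Phi^n$ is a strict contraction on the Banach space $L^2_\boxtimes(\Omega\times I; E)$, so the Banach fixed-point theorem provides a unique fixed point $\mathbb{X}^{\underline\alpha,z}$, which is the desired unique solution. (Equivalently, one could equip $E$ with the weighted norm $\sup_t e^{-\beta t}|\cdot|$ and choose $\beta$ large to get a contraction directly.)

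The step I expect to be the main obstacle is not the contraction --- elementary given boundedness of $a$ --- but verifying that $\Phi$ genuinely acts on $L^2_\boxtimes(\Omega\times I; E)$, i.e. that the constructed trajectories are strongly measurable with respect to the non-product $\sigma$-field $\mathcal{F}\boxtimes\mathcal{I}$ of the Fubini extension, and that the interchange $\mathbb{E}^\boxtimes = \int_I\mathbb{E}\,d\lambda$ is licit at each stage. This is precisely where the Fubini-extension formalism of Section~\ref{sec:fub}, together with Remark~\ref{re:Bochner} for the $E$-valued Bochner integrals, does the work, and it is the reason the fixed-point construction in $L^2_\boxtimes$ is preferable to solving label-by-label: the limit of the Picard iterates lies in $L^2_\boxtimes$ by completeness and therefore inherits the required joint measurability automatically. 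As a consistency check, the fixed point coincides $\mathbb{P}\boxtimes\lambda$-a.e. with the explicit variation-of-constants expression $\mathbb{X}^x_t = B^x_t + e^{a(x)t}\xi^x + \int_0^t e^{a(x)(t-s)}\bigl(a(x)B^x_s + b(x)\alpha^x_s + c(x)z^x_s\bigr)\,ds$.
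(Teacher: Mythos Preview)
Your proposal is correct and is exactly the ``adaptation of the standard existence and uniqueness proof for SDEs'' that the paper alludes to (the paper omits the proof of this lemma). The Picard/fixed-point argument on the Banach space $L^2_\boxtimes(\Omega\times I;E)$, with the iterated-mapping contraction $\frac{(|a|_\infty T)^n}{n!}$, mirrors precisely the technique the authors use in the neighboring Proposition~\ref{prop:fp-U}, and your attention to the $\mathcal{F}\boxtimes\mathcal{I}$-measurability of each summand of $\Phi$ is the right place to spend care.
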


A solution to \eqref{eq:lemma2-state-eq} is any $E$-valued process on $\Omega\times I$ that satisfies the equation for $\mathbb{P}\boxtimes\lambda$-a.e $(\omega,x)\in \Omega\times I$. It is unique if $\mathbb{P}\boxtimes\lambda(\mathbb{X}_t = \widetilde{\mathbb{X}}_t,\ t\in[0,T]) = 1$ whenever $\mathbb{X}$ and $\widetilde{\mathbb{X}}$ are solutions. The proof of Lemma~\ref{lemma:well-posedness-of-X} is an adaptation of the standard existence and uniqueness proof for SDEs and is omitted. Next we characterize the aggregate as the unique fixed point to the following map:
\begin{equation}
\label{eq:U}
\begin{aligned}
    &U^{\underline\alpha} : L^2_{\boxtimes}(\Omega\times I; E) 
    \rightarrow 
    L^2_{\boxtimes}(\Omega\times I; E),
    \\
    &z
    \mapsto 
    U^{\underline\alpha}z : (\omega,x) \mapsto
    \Big(\int_I w(x,y)\mathbb{X}_t^{\underline\alpha,y,z}(\omega)\lambda(dy)\Big)_{t\in[0,T]}
\end{aligned}
\end{equation}
where $\underline{\alpha}\in \mathcal{A}$ is fixed and $\mathbb{X}^{\underline\alpha,z}$ is the unique solution to \eqref{eq:lemma2-state-eq} as given by Lemma~\ref{lemma:well-posedness-of-X}. 

\begin{proposition}
\label{prop:fp-U}
For each admissible strategy profile  $\underline{\alpha}\in\underline{\mathcal{A}}$, the mapping $U^{\underline\alpha}$ is well-defined and has a unique fixed point which we shall denote $\mathbb{Z}^{\underline\alpha}$.
\end{proposition}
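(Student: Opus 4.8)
The plan is to realize $U^{\underline\alpha}$ as a self-map of the Banach space $L^2_\boxtimes(\Omega\times I; E)$ and apply the Banach fixed point theorem, after re-norming the space by an equivalent Bielecki-type weighted norm that renders $U^{\underline\alpha}$ a strict contraction. First I would settle well-definedness. For fixed $z\in L^2_\boxtimes(\Omega\times I; E)$, Lemma~\ref{lemma:well-posedness-of-X} supplies a unique solution $\mathbb{X}^{\underline\alpha,z}\in L^2_\boxtimes(\Omega\times I; E)$ to \eqref{eq:lemma2-state-eq}. Since by definition $U^{\underline\alpha}z=[W\mathbb{X}^{\underline\alpha,z}]$, the assertion that $U^{\underline\alpha}z$ again lies in $L^2_\boxtimes(\Omega\times I; E)$, with the required joint measurability in $(\omega,x)$, is exactly the content of Lemma~\ref{lemma:extended-W}\ref{lemma:extended-W-iii}, which moreover gives the nonexpansiveness $\|U^{\underline\alpha}z\|\le\|\mathbb{X}^{\underline\alpha,z}\|$.

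For the contraction estimate, the key structural observation is that the difference of two solutions solves a \emph{pathwise linear ODE} with no noise. Indeed, given inputs $z,\tilde{z}$, the Brownian increments, the initial data $\xi^x$, and the control $\alpha^x$ all coincide and cancel in $\Delta X:=\mathbb{X}^{\underline\alpha,z}-\mathbb{X}^{\underline\alpha,\tilde{z}}$, leaving
\[
\Delta X_t^x=\int_0^t\bigl(a(x)\Delta X_s^x+c(x)(z_s^x-\tilde{z}_s^x)\bigr)\,ds,\qquad \Delta X_0^x=0.
\]
Writing $C_a=\|a\|_\infty$, $C_c=\|c\|_\infty$ (finite by Assumption~\ref{assump:sec3.1}\ref{assump:sec3.1-ii}), Gr\"onwall's inequality yields the pathwise bound $|\Delta X_t^x|\le K\int_0^t|z_s^x-\tilde{z}_s^x|\,ds$ with $K=C_c e^{C_a T}$. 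Inserting the weight $e^{-\beta t}$, applying Cauchy--Schwarz, integrating in $t$ and taking $\mathbb{E}^\boxtimes$ gives
\[
\|U^{\underline\alpha}z-U^{\underline\alpha}\tilde{z}\|_\beta^2\le\|\Delta X\|_\beta^2\le\frac{K^2T}{\beta}\,\|z-\tilde{z}\|_\beta^2,
\]
where $\|z\|_\beta^2=\mathbb{E}^\boxtimes[\int_0^T e^{-\beta t}|z_t|^2\,dt]$ and the first inequality is Lemma~\ref{lemma:extended-W}\ref{lemma:extended-W-iii} applied time-slice by time-slice under the weight. Choosing $\beta>K^2T$ makes $U^{\underline\alpha}$ a strict contraction for $\|\cdot\|_\beta$. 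Because $e^{-\beta T}\le e^{-\beta t}\le 1$, the norm $\|\cdot\|_\beta$ is equivalent to the native norm, so $L^2_\boxtimes(\Omega\times I; E)$ remains complete and Banach's theorem delivers the unique fixed point $\mathbb{Z}^{\underline\alpha}$.

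The step I expect to be the main obstacle is precisely the contraction: the naive Gr\"onwall constant $K\sqrt{T}$ on the full interval need not be below $1$ for large $T$, so the crux is the exponential weight (equivalently, one could localize on short subintervals and patch, or iterate $U^{\underline\alpha}$ until some power contracts via the gain of a factor $t^n/n!$). A secondary point to verify carefully is that the graphon operator stays norm-nonexpansive under the weighted norm; this is immediate since $W$ acts only in the label variable and commutes with the time-integration carrying the weight, so the time-slice estimate of Lemma~\ref{lemma:extended-W} transfers verbatim.
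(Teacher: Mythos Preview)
Your proposal follows the same skeleton as the paper: well-definedness via Lemmas~\ref{lemma:well-posedness-of-X} and~\ref{lemma:extended-W}, then a Gr\"onwall estimate on the difference $\Delta X$ (which, as you correctly observe, satisfies a pathwise linear ODE with no noise), then a device to handle large $T$. The paper chooses the device you mention parenthetically---iterating $U^{\underline\alpha}$ to pick up the factor $(CT)^N/N!$ and invoking Banach's theorem for iterated maps---whereas you lead with a Bielecki re-norming. Both are standard and both work in principle.

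There is, however, a concrete gap in your execution of the Bielecki step. The ambient space is $L^2_\boxtimes(\Omega\times I;E)$ with $E=C([0,T];\mathbb{R})$ carrying the \emph{supremum} norm, so the native norm is $\mathbb{E}^\boxtimes\bigl[\sup_{t\in[0,T]}|z_t|^2\bigr]$. Your weighted norm $\|z\|_\beta^2=\mathbb{E}^\boxtimes\bigl[\int_0^T e^{-\beta t}|z_t|^2\,dt\bigr]$ is an $L^2$-in-time norm; the inequality $e^{-\beta T}\le e^{-\beta t}\le 1$ only shows it is equivalent to $\mathbb{E}^\boxtimes\bigl[\int_0^T|z_t|^2\,dt\bigr]$, \emph{not} to the sup-based native norm. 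Consequently $L^2_\boxtimes(\Omega\times I;E)$ is not complete under $\|\cdot\|_\beta$ (a $\|\cdot\|_\beta$-Cauchy sequence need only converge in an $L^2([0,T])$-valued space, with no continuity in $t$ guaranteed for the limit), and Banach's theorem does not apply as you state it. The fix is easy: use a weighted supremum norm such as $\|z\|_{\beta}^2=\mathbb{E}^\boxtimes\bigl[\sup_{t\in[0,T]}e^{-2\beta t}|z_t|^2\bigr]$, which \emph{is} equivalent to the native norm and for which your pathwise Gr\"onwall bound yields a contraction constant of order $K/\beta$. Alternatively, just run the iteration you already flagged; that is precisely the paper's route.
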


A proof of Proposition~\ref{prop:fp-U} is found in the appendix. Thus, there exists a unique solution in $L^2_\boxtimes(\Omega\times I; E)$ to the graphon SDE system~\eqref{eq:linear_dynamics_intro}--\eqref{eq:definition_of_Z} with $\underline{\alpha}\in \mathcal{A}$ fixed and the aggregate defined as the fixed point of $U^{\underline{\alpha}}$. The next result further specifies the structure of the solution. In summary, the aggregate must be almost surely deterministic and there is a unique version of the solution that solves the linear graphon SDE \eqref{eq:linear_dynamics_intro} for every $x\in I$ and whose corresponding aggregate is deterministic everywhere.

\begin{theorem}
\label{thm:z-det-linear}
Let $\underline{\alpha}\in \underline{\mathcal{A}}$ be fixed, let $\mathbb{X}^{\underline \alpha}$ be the unique solution to \eqref{eq:linear_dynamics_intro}--\eqref{eq:definition_of_Z} in $L^2_\boxtimes(\Omega\times I; E)$, and let $\mathbb{Z}^{\underline \alpha}$ be the corresponding aggregate. Then $\mathbb{Z}^{\underline{\alpha}}$ is $\mathbb{P}\boxtimes\lambda$-a.s. equal to a deterministic function in $L^2_\lambda(I;E)$. That is, for some $\widetilde f\in L^2_\lambda(I; E)$:
\begin{equation*}
    \mathbb{P}\boxtimes\lambda\left( \|\mathbb{Z}^{\underline{\alpha}} - f\|_E = 0\right) = 1, \quad f(\omega,x) := \widetilde f(x), \quad (\omega,x) \in \Omega \times I.
\end{equation*}
Furthermore, there exists a unique pair $(X^{\underline\alpha,x})_{x\in I}$ and $(Z^{\underline\alpha,x})_{x\in I}$ of versions of $\mathbb{X}^{\underline{\alpha}}$ and $\mathbb{Z}^{\underline{\alpha}}$, respectively, solving the system \eqref{eq:linear_dynamics_intro}--\eqref{eq:definition_of_Z} for all $x\in I$ in the standard $L^2(\Omega; E)$-sense. Moreover, $Z^{\underline\alpha,\cdot}$ is an everywhere deterministic version of $\mathbb{Z}^{\underline\alpha}$.
\end{theorem}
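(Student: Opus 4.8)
The plan is to break the circular dependence between the state family and its own aggregate by first solving the problem on the subspace of \emph{deterministic} aggregates, and only afterwards matching this solution with the abstract fixed point $\mathbb{Z}^{\underline\alpha}$ of Proposition~\ref{prop:fp-U}. I would introduce a map $\Phi$ on the deterministic space $L^2_\lambda(I;E)$: given $\zeta\in L^2_\lambda(I;E)$, viewed as an input constant in $\omega$, let $\mathbb{X}^{\underline\alpha,\zeta,y}$ be the unique $L^2_\boxtimes$-solution of \eqref{eq:lemma2-state-eq} furnished by Lemma~\ref{lemma:well-posedness-of-X} with $z=\zeta$, and set
$$\Phi(\zeta)^x_t := \int_I w(x,y)\,\mathbb{E}\big[\mathbb{X}^{\underline\alpha,\zeta,y}_t\big]\,\lambda(dy), \quad (x,t)\in I\times[0,T].$$
Because the input $\zeta$ is deterministic, the variation-of-constants representation shows that $\mathbb{X}^{\underline\alpha,\zeta,y}$ is a measurable function of $\mathbb{B}^y=(B^y,\xi^y)$ alone; applied label-wise this preserves essential pairwise independence, so $(\mathbb{X}^{\underline\alpha,\zeta,y})_{y\in I}$ is e.p.i. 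I would then show $\Phi$ is a contraction (in a norm weighted by $e^{-\beta t}$, or on a short interval iterated over $[0,T]$), using the Lipschitz dependence of the solution on its drift input together with the facts, from Lemma~\ref{lemma:extended-W}\ref{lemma:extended-W-iii} and Assumption~\ref{assump:sec3.1}, that the graphon operator has norm at most $1$ and $c$ is bounded. This is the estimate already underlying Proposition~\ref{prop:fp-U}, and it yields a unique fixed point $\zeta^*\in L^2_\lambda(I;E)$.

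The crucial step is to identify $\zeta^*$ with $\mathbb{Z}^{\underline\alpha}$. Denoting by the same symbol the element of $L^2_\boxtimes(\Omega\times I;E)$ that is constant in $\omega$ and equal to $\zeta^*$, I would apply the Exact Law of Large Numbers to the e.p.i. family $(w(x,y)\mathbb{X}^{\underline\alpha,\zeta^*,y}_t)_{y\in I}$ to obtain, for each $(x,t)$ and $\mathbb{P}$-a.s.,
$$U^{\underline\alpha}(\zeta^*)^x_t = \int_I w(x,y)\mathbb{X}^{\underline\alpha,\zeta^*,y}_t\,\lambda(dy) = \int_I w(x,y)\mathbb{E}\big[\mathbb{X}^{\underline\alpha,\zeta^*,y}_t\big]\,\lambda(dy) = \Phi(\zeta^*)^x_t = \zeta^{*,x}_t.$$
Hence $\zeta^*$ is a fixed point of $U^{\underline\alpha}$, and the uniqueness in Proposition~\ref{prop:fp-U} forces $\mathbb{Z}^{\underline\alpha}=\zeta^*$ in $L^2_\boxtimes$. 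This is precisely the first claim, with $\widetilde f=\zeta^*\in L^2_\lambda(I;E)$.

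For the everywhere-defined versions I would fix a measurable representative of $\zeta^*$ and, for every $x\in I$, define $X^{\underline\alpha,x}$ as the unique strong $L^2(\Omega;E)$-solution of the scalar SDE \eqref{eq:linear_dynamics_intro} with the \emph{deterministic} path $c(x)\zeta^{*,x}$ in place of $c(x)Z^{\underline\alpha,x}$; this is classical since the input is fixed. Setting $Z^{\underline\alpha,x}:=[WX^{\underline\alpha,\cdot}](x)$ via Lemma~\ref{lemma:extended-W}, the ELLN applied once more (the states are again e.p.i. because the input is deterministic) gives $Z^{\underline\alpha,x}_t=\int_I w(x,y)\mathbb{E}[X^{\underline\alpha,y}_t]\lambda(dy)=\zeta^{*,x}_t$ for all $x$, so \eqref{eq:definition_of_Z} holds everywhere and $Z^{\underline\alpha,\cdot}$ is an everywhere-deterministic version of $\mathbb{Z}^{\underline\alpha}$. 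Uniqueness of the pair follows because any everywhere-solution with deterministic aggregate must have that aggregate solve $\Phi(\cdot)=\cdot$, whence it equals $\zeta^*$, after which the per-label SDE has a unique strong solution.

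The main obstacle I anticipate is the passage from the fixed-time ELLN recalled before Section~\ref{sec:graphon-games} to an almost-sure statement about the entire $E$-valued trajectory: the cited corollary gives, for each $t$, constancy in $\omega$ only up to a $\mathbb{P}$-null set that may depend on $t$. To conclude that $\mathbb{Z}^{\underline\alpha}$ coincides with a genuine $E$-valued deterministic function I would either invoke a separable-Banach-valued version of the ELLN (legitimate here by Remark~\ref{re:Bochner}, since $E$ is separable and the aggregate is Bochner integrable) or, more elementarily, apply the scalar ELLN along a countable dense set of times and extend to all $t$ by the almost-sure continuity of the trajectories. A secondary point requiring care is the justification that applying a jointly measurable, label-indexed map to the e.p.i. family $(\mathbb{B}^y)_{y\in I}$ preserves essential pairwise independence, which underpins every use of the ELLN above.
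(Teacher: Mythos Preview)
Your proposal is correct and follows essentially the same route as the paper: both arguments find the fixed point of $U^{\underline\alpha}$ inside the subspace of deterministic aggregates (you via a separate map $\Phi$ on $L^2_\lambda(I;E)$, the paper by restricting $U^{\underline\alpha}$ to the closed set $\widetilde L\subset L^2_\boxtimes$ of a.s.\ deterministic functions), use the ELLN to show this subspace is invariant, and then invoke the uniqueness in Proposition~\ref{prop:fp-U}. For the everywhere-defined versions, the paper shortcuts your second ELLN application by defining $Z^{\underline\alpha,x}:=\int_I w(x,y)\mathbb{E}[\mathbb{X}^{\underline\alpha,y}]\lambda(dy)$ directly---deterministic by construction---and then verifies it is a version via a Gronwall comparison, which you may find slightly cleaner.
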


\begin{remark}
In light of Theorem~\ref{thm:z-det-linear} we can treat the state and aggregate as defined for all $x\in I$ and the aggregate as deterministic:
\begin{equation}
\label{eq:lambda-or-not}
    Z^{\underline\alpha,x}_t 
    = \int_I w(x,y)\mathbb{E}[X_t^{\underline\alpha,y}]\lambda(dy).
\end{equation}
Assumption~\ref{assump:sec3.1}.\ref{assump:sec3.1-ii} alone is not sufficient to let us simplify the aggregate to an integral with respect to the Lebesgue measure $dy$. To wit, the mean state depends on the coefficients. There is a "trade-off" between the measurability assumption on the coefficients and which measure we see in the aggregate integral \eqref{eq:lambda-or-not}. We choose to work under a weaker assumption on the coefficients, and deal with $\lambda$ in \eqref{eq:lambda-or-not}.
\end{remark}

\begin{proof}
For the first statement of the theorem, consider the set of all almost surely deterministic functions in $L^2_\boxtimes(\Omega\times I; E)$
\begin{equation*}
    \widetilde L := \{\widetilde z\in L^2_\boxtimes(\Omega\times I; E) : \mathbb{P}\boxtimes\lambda(\widetilde z = \gamma) = 1 \text{ for some } \gamma(\omega,\cdot) = \widetilde{\gamma}(\cdot) \in L^2_\lambda(I;E)  \}.
\end{equation*}
$\widetilde L$ is a closed subset of $L^2_\boxtimes(\Omega\times I; E)$, hence a complete space. If $\widetilde z\in \widetilde L$, then $([U^{\underline{\alpha}} \widetilde z](x))_{x\in I}$ is an e.p.i. collection of random variables. By ELLN: $[U^{\underline{\alpha}}\widetilde z](\omega,x) = \mathbb{E}[[U^{\underline{\alpha}}\widetilde z](x)]$ for $\mathbb{P}$-a.e. $\omega\in \Omega$, hence $U^{\underline{\alpha}}$ is well-defined as a function from $\widetilde L$ to itself. A similar argument to that of Proposition~\ref{prop:fp-U} yields the contraction property of $U^{\underline{\alpha}}$ restricted to $\widetilde L$. Since $\widetilde L \subset L^2_\boxtimes(\Omega\times I; E)$, the fixed point of $U^{\underline{\alpha}}$ in $\widetilde L$ must be $\mathbb{Z}^{\underline{\alpha}}$, the unique fixed point in $L^2_\boxtimes(\Omega\times I; E)$.

To construct the desired version of $\mathbb{Z}^{\underline{\alpha}}$, we define for a fixed $x\in I$ the mapping $\Phi^{x}$ by as the Bochner integral
\begin{equation*}
   \Phi^{x}(f) := \int_I w(x,y)\mathbb{E}[f^y]\lambda(dy)
\end{equation*}
for any $E$-valued, $\mathbb{P}\boxtimes\lambda$-square integrable, and $\mathcal{F}\boxtimes \mathcal{I}$-measurable function $f$. Clearly, $\Phi^{x}$ is constant over every equivalence class in $L^2_\boxtimes(\Omega\times I; E)$. Denote the element corresponding to $\mathbb{X}^{\underline{\alpha}}$ by $Z^{\underline{\alpha},x}$. For all $x\in I$, $Z^{\underline{\alpha},x} \in E$ by Lemma~\ref{lemma:extended-W}, and it is deterministic. To prove that $(Z^{\underline{\alpha},x})_{x\in I}$ is the desired version of the aggregate, we must simultaneously construct a version of $\mathbb{X}^{\underline{\alpha}}$. Let
\begin{equation*}
    X^{\underline{\alpha},x}_t := \xi^x + \int_0^t \left( a(x)X^{\underline{\alpha},x}_s + b(x)\alpha^x_s + c(x)Z^{\underline{\alpha},x}_s \right) ds + B^x_t,\quad (t,x)\in [0,T]\times I. 
\end{equation*}

Then $t\mapsto X^{\underline{\alpha},x}_t$ is continuous $\mathbb{P}$-a.s. Moreover,
$ \int_I\|Z^{\underline{\alpha},x}\|_E^2\lambda(dx) \leq C\mathbb{E}^\boxtimes\big[\|\mathbb{X}^{\underline{\alpha}}\|_E^2\big]$ so $x \mapsto Z^{\underline{\alpha},x}$ is $\lambda$-square integrable and it follows that $X^{\underline{\alpha}}$ is $\mathbb{P}\boxtimes\lambda$-square integrable. Using this together with admissibility of $\underline{\alpha}$ and Assumption~\ref{assump:sec3.1}, we get that $X^{\underline{\alpha},x} \in L^2(\Omega; E)$ for all $x\in I$.
It remains only to show that $(Z^{\underline{\alpha},x})_{x\in I}$ and $(X^{\underline{\alpha},x})_{x\in I}$ are members of the equivalence classes $\mathbb{Z}^{\underline{\alpha}}$ and $\mathbb{X}^{\underline{\alpha}}$, respectively, since that implies
$Z^{\underline{\alpha},x} = \int_I w(x,y)\mathbb{E}[X^{\underline{\alpha},y}]\lambda(dy)$.
Let $\chi$ and $z$ be arbitrary members of $\mathbb{X}^{\underline{\alpha}}$ and $\mathbb{Z}^{\underline{\alpha}}$, respectively. Then
\begin{equation*}
    \chi_t = \xi + \int_0^ta(\cdot)\chi_s + b(\cdot)\alpha_s + c(\cdot)z_sds + B_t,\quad t\in[0,T],\ \mathbb{P}\boxtimes\lambda\text{-a.s.},
\end{equation*}
and therefore
\begin{equation}
\label{eq:setup-for-gronwall}
    X_t^{\underline{\alpha}} - \chi_t = \int_0^ta(\cdot)\left(X_s^{\underline{\alpha}} - \chi_s\right)ds + \int_0^t c(\cdot) (Z^{\underline{\alpha}}_s - z_s)ds,\ t\in[0,T],\ \mathbb{P}\boxtimes\lambda\text{-a.s.}
\end{equation}
Since $\mathbb{Z}^{\underline{\alpha}}$ is the fixed point to $U^{\underline\alpha}$, $z^\cdot = \int_Iw(\cdot,y)\mathbb{E}[\chi^y]\lambda(dy)$ $\mathbb{P}\boxtimes\lambda$-a.s. for $\chi\in \mathbb{X}^{\underline{\alpha}}$. By construction, $Z^{\underline{\alpha}} = z$ $\mathbb{P}\boxtimes\lambda$-a.s., implying that $Z^{\underline{\alpha}} \in \mathbb{Z}^{\underline{\alpha}}$. Gronwall's lemma applied to \eqref{eq:setup-for-gronwall} then yields $\mathbb{P}\boxtimes\lambda(X^{\underline{\alpha}} = \chi) = 1$, and hence $X^{\underline{\alpha}} \in \mathbb{X}^{\underline{\alpha}}$ which concludes the proof. 
\end{proof}

\subsection{Stochastic maximum principle}
\label{sec:pmp}

We consider a finite horizon stochastic differential game where the cost incurred by player $x\in I$ is composed of a running cost, here modeled as a function $f^x: \mathbb{R}\times\mathbb{R}\times\mathbb{R}\rightarrow\mathbb{R}$ of player $x$'s state, control, and aggregate; and a terminal cost given by a function $h^x:\mathbb{R}\times\mathbb{R}\rightarrow\mathbb{R}$ of player $x$'s terminal state and aggregate. The expected cost when using the admissible strategy $\beta\in \mathcal{A}(x)$ while $\lambda$-a.e. other player uses the strategy profile $\underline{\alpha}$ is $J^x : \mathcal{A}(x) \times \underline{\mathcal{A}} \rightarrow \mathbb{R}$ defined by:
\begin{equation}
\label{eq:def-Jx-beta-alpha}
     J^x(\beta;\underline{\alpha}) :=
     \mathbb{E}
     \Big[
        \int_0^T f^x\big(X^{(\underline{\alpha}^{-x},\beta),x}_t, \beta_t, Z^{\underline{\alpha},x}_t\big)dt 
        + 
        h^x\big(X^{(\underline{\alpha}^{-x},\beta),x}_T, Z^{\underline{\alpha},x}_T\big)
     \Big],
 \end{equation}
where we adopt the notation $(\underline{\alpha}^{-x},\beta)$ for a strategy profile
 \begin{equation*}
     (\underline{\alpha}^{-x},\beta)^y
     =
     \begin{cases}
     \underline{\alpha}^y, &\text{if }y\neq x
     \\
     \beta, &\text{if }y=x.
     \end{cases},\quad
     \underline{\alpha}\in\underline{\mathcal{A}},\ x\in I,\ \beta\in\mathcal{A}(x).
 \end{equation*}
The strategy profile $(\underline{\alpha}^{-x},\beta)$ is admissible and $\mathbb{P}\boxtimes\lambda$-a.e. equal to $\underline\alpha$, so $Z^{(\underline{\alpha}^{-x},\beta), x} = Z^{\underline{\alpha},x}$. 
Other players' actions appear in player $x$'s cost only implicitly through $Z^{\underline{\alpha},x}$, which is deterministic by Theorem~\ref{thm:z-det-linear}, and unchanged if any one specific player changes control. In light of this, we write $J^x(\beta;\underline{\alpha})$ as $\mathcal{J}^x(\beta;Z^{\underline{\alpha},x})$ for a function $\mathcal{A}(x) \times L^2(I; E) \ni (\alpha,z) \mapsto \mathcal{J}^x(\alpha;z) \in \mathbb{R}$.

In classical game theory, a strategy profile such that no player can do better by unilaterally changing strategy is called a Nash equilibrium. In the present context, we adopt the following notion of equilibrium:
 
\begin{definition}
An admissible strategy profile $\underline{\hat{\alpha}}$ is a graphon game Nash equilibrium if 
 \begin{equation*}
    \mathcal{J}^x(\hat{\alpha}^x;Z^{\underline{\hat{\alpha}};x}) \leq \mathcal{J}^x(\beta;Z^{\underline{\hat{\alpha}};x}),\quad \beta \in \mathcal{A}(x),\ x\in I.
 \end{equation*}
\end{definition}

In differential games, equilibria are often characterized with Pontryagin's maximum principle. Proposition~\hyperref[prop:nec]{\ref{prop:nec}} below provides, player by player, a necessary condition  and a sufficient condition for an admissible strategy profile to be a graphon game Nash equilibrium. The proof follows standard lines for the stochastic maximum principle, see for example \cite{yong1999stochastic}.

\begin{proposition}
\label{prop:nec}
Assume that for all $x\in I$, the functions $f^x$ and $h^x$ are measurable and that for all $(x,u,z)\in I\times \mathbb{R}\times\mathbb{R}$, $\chi \mapsto f^x(\chi,u,z)$ and $\chi \mapsto h^x(\chi,z)$ are differentiable. Then if it exists, a Nash equilibrium $\underline{\hat{\alpha}}$ must satisfy
\begin{equation}
\label{eq:nec_cond_for_alpha}
    \hat{\alpha}^x_t \in \underset{u\in \mathbb{R}}{\arg\inf}\, H^x(t,\hat{X}^x_t,u, p^x_t),\quad
    \text{a.e. } t\in[0,T],\ \mathbb{P}\text{-a.s},
\end{equation} 
for each $x\in I$, with $(\hat{X}^x,p^x,q^x)$ solving the Hamiltonian system
\begin{equation}
\label{eq:nec_cond}
\begin{cases}
&d\hat{X}^x_t 
= 
\partial_p H^x(t,\hat{X}^x_t, \hat{\alpha}^x_t, p^x_t)dt + dB^x_t,\qquad \hat{X}_0^x = \xi^x,
\\
&dp^x_t 
= 
-\partial_\chi H^x(t,\hat{X}^x_t, \hat{\alpha}^x_t, p^x_t)dt + q^x_tdB^x_t,\quad 
p^x_T = \partial_\chi h^x(\hat{X}_T^x, \hat{Z}^x_T),
\end{cases}
\end{equation}
where $H^x : [0,T]\times\mathbb{R}\times\mathbb{R}\times\mathbb{R} \rightarrow \mathbb{R}$ is the Hamiltonian of player $x$,
\begin{equation*}
        H^x(t,\chi,u,p) = f^x(\chi,u,\hat{Z}^x_t) + (a(x)\chi + b(x)u + c(x)\hat{Z}^x_t)p,
\end{equation*}
and $\hat{Z}^x_t$ is the aggregate corresponding to the class $\hat{X}^\cdot_t$. 

If, in addition $(\chi,u) \mapsto f^x(\chi,u,z)$ is jointly convex and $\chi\mapsto h^x(\chi,z)$ is convex for $z\in \mathbb{R}$, then any $\underline{\alpha}\in\underline{\mathcal{A}}$ satisfying~\eqref{eq:nec_cond_for_alpha} for every $x \in I$ is a Nash equilibrium.  
\end{proposition}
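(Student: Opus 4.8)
The plan is to exploit the decoupling supplied by Theorem~\ref{thm:z-det-linear}. For a fixed candidate profile $\underline{\hat\alpha}$ and a fixed label $x\in I$, the aggregate $Z^{\underline{\hat\alpha},x}$ is a \emph{deterministic} element of $L^2(I;E)$, and it is left unchanged when player $x$ alone deviates to some $\beta\in\mathcal{A}(x)$, since $(\underline{\hat\alpha}^{-x},\beta)$ and $\underline{\hat\alpha}$ agree $\mathbb{P}\boxtimes\lambda$-a.e. Consequently $\mathcal{J}^x(\,\cdot\,;Z^{\underline{\hat\alpha},x})$ is the cost functional of a \emph{single-agent} stochastic control problem with state $dX_t=(a(x)X_t+b(x)\beta_t+c(x)Z^{\underline{\hat\alpha},x}_t)dt+dB^x_t$, in which the aggregate enters only as an exogenous, deterministic, time-dependent coefficient, and the Nash property at $x$ is precisely the statement that $\hat\alpha^x$ minimizes this functional over $\mathcal{A}(x)$. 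Thus the whole proposition reduces, label by label, to the classical stochastic maximum principle for a control problem whose volatility ($\equiv 1$) depends neither on the state nor on the control; I would therefore follow the standard route of, e.g., \cite{yong1999stochastic}.

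For the necessary part I would apply Pontryagin's principle to this decoupled problem. Because the diffusion coefficient is control- and state-independent, the first-order adjoint pair $(p^x,q^x)$ solving the linear backward equation in \eqref{eq:nec_cond} suffices, with no second-order correction in the Hamiltonian; well-posedness of this BSDE is the standard existence/uniqueness result for linear BSDEs once one records that $\partial_\chi f^x(\hat X^x_\cdot,\hat\alpha^x_\cdot,\hat Z^x_\cdot)$ and $\partial_\chi h^x(\hat X^x_T,\hat Z^x_T)$ are square-integrable in the present setting. To obtain the full Hamiltonian minimization \eqref{eq:nec_cond_for_alpha} rather than only a first-order stationarity condition, and since no convexity is assumed here, I would use a spike variation of $\hat\alpha^x$ on a short interval $[\tau,\tau+\varepsilon]$ toward an arbitrary value $v\in\mathbb{R}$; this needs no differentiability of $f^x$ in $u$. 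Expanding the cost to first order in $\varepsilon$ and using the duality between the first variation of the state and $(p^x,q^x)$ yields $H^x(\tau,\hat X^x_\tau,v,p^x_\tau)\ge H^x(\tau,\hat X^x_\tau,\hat\alpha^x_\tau,p^x_\tau)$ for a.e.\ $\tau$, $\mathbb{P}$-a.s., which is \eqref{eq:nec_cond_for_alpha}.

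For the sufficient part, let $\underline\alpha$ satisfy \eqref{eq:nec_cond_for_alpha} for every $x\in I$ and write $(\hat X^x,p^x,q^x)$ for the associated solution of \eqref{eq:nec_cond}, so that $\hat\alpha^x=\alpha^x$. Fix $x$ and $\beta\in\mathcal{A}(x)$, let $X^\beta$ be the state driven by $\beta$ against the same deterministic aggregate, and write $Z$ for $Z^{\underline{\alpha},x}$. I would run the verification argument: convexity of $h^x$ gives $h^x(X^\beta_T,Z_T)-h^x(\hat X^x_T,Z_T)\ge p^x_T(X^\beta_T-\hat X^x_T)$. Applying It\^o's formula to $p^x_t(X^\beta_t-\hat X^x_t)$, the martingale part has zero expectation and $X^\beta-\hat X^x$ carries no martingale part (the $dB^x$ and $c(x)Z$ terms cancel), so that, after using $\partial_\chi H^x=\partial_\chi f^x+a(x)p$, the cost gap is bounded below by $\mathbb{E}\int_0^T\bigl[H^x(t,X^\beta_t,\beta_t,p^x_t)-H^x(t,\hat X^x_t,\hat\alpha^x_t,p^x_t)-\partial_\chi H^x(t,\hat X^x_t,\hat\alpha^x_t,p^x_t)(X^\beta_t-\hat X^x_t)\bigr]dt$. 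Joint convexity of $(\chi,u)\mapsto H^x(t,\chi,u,p^x_t)$, inherited from that of $f^x$ since the remaining terms are affine in $(\chi,u)$, lower-bounds the integrand by $\partial_u H^x(t,\hat X^x_t,\hat\alpha^x_t,p^x_t)(\beta_t-\hat\alpha^x_t)$, and this vanishes because \eqref{eq:nec_cond_for_alpha} makes $\hat\alpha^x_t$ an interior minimizer of the convex map $u\mapsto H^x(t,\hat X^x_t,u,p^x_t)$. Hence $\mathcal{J}^x(\alpha^x;Z^{\underline{\alpha},x})\le\mathcal{J}^x(\beta;Z^{\underline{\alpha},x})$ for every $\beta$ and every $x$, i.e.\ $\underline\alpha$ is a Nash equilibrium.

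The main obstacle is not in the maximum-principle machinery, which is classical once the diffusion is uncontrolled, but in the reduction that makes it applicable: the argument rests entirely on the aggregate being deterministic and insensitive to a single player's deviation, which is exactly what Theorem~\ref{thm:z-det-linear} provides. The residual technical points are obtaining the global minimization condition in the necessary direction via spike variation, which requires enough integrability of the cost derivatives under only the differentiability hypothesis, and the well-posedness of the adjoint BSDE.
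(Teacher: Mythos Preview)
Your proposal is correct and takes essentially the same approach as the paper, which itself does not give a detailed proof but simply states that ``the proof follows standard lines for the stochastic maximum principle, see for example \cite{yong1999stochastic}.'' You have correctly identified the key reduction---that Theorem~\ref{thm:z-det-linear} makes the aggregate deterministic and insensitive to a single player's deviation, so the problem decouples into a family of classical single-agent control problems with uncontrolled diffusion---and then applied the standard necessary (spike variation) and sufficient (convexity verification) arguments.

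One small refinement for the sufficiency step: since the proposition only assumes differentiability of $f^x$ in $\chi$, not in $u$, you should avoid writing $\partial_u H^x$; instead, use that joint convexity of $(\chi,u)\mapsto H^x(t,\chi,u,p^x_t)$ together with differentiability in $\chi$ and the fact that $\hat\alpha^x_t$ is an unconstrained minimizer in $u$ yields a subgradient of the form $(\partial_\chi H^x,0)$ at $(\hat X^x_t,\hat\alpha^x_t)$, which directly gives the nonnegativity of the integrand without invoking $\partial_u H^x$.
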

The system \eqref{eq:nec_cond} is \textit{a priori} only formally written. The next section is devoted to proving existence and uniqueness of solutions to \eqref{eq:nec_cond} in the linear-quadratic setting, making the argument of Proposition~\ref{prop:nec} rigorous there.
In particular, existence and uniqueness of the Nash equilibrium can be obtained if existence and uniqueness of the solution to the forward-backward system~\eqref{eq:nec_cond} holds, as will be the case in the sequel.

\subsection{Existence and uniqueness of solutions to the Hamiltonian system}
\label{sec:FBSDE}

In this section we analyze the FBSDE \eqref{eq:nec_cond} for the linear quadratic case. Let $C_f : I \rightarrow \mathbb{R}^{3\times 3}_{\text{sym}}$ and $C_h: I \rightarrow \mathbb{R}^{2\times 2}_{\text{sym}}$ be bounded and $\mathcal{I}$-measurable and let from now on $f^x$ and $h^x$ be the mappings
\begin{equation}
\label{eq:quadratic_costs}
    f^x(\mathsf u) = \frac{1}{2}\mathsf u^*
                    C_f(x)
                    \mathsf u,\quad 
    h^x(\mathsf v) = \frac{1}{2}\mathsf v^*
            C_h(x)
            \mathsf v,\quad \mathsf u \in \mathbb{R}^3, \mathsf v \in \mathbb{R}^2.
\end{equation}
Whenever $[C_f(x)]_{22}>0$, the unique minimizer of $\mathbb{R}\ni\alpha \mapsto H^x(t,\chi, \alpha, p)$ is
\begin{equation}
\label{eq:foc_}
     \frac{1}{[C_f(x)]_{22}}\Big(-[C_f(x)]_{12}\chi - [C_f(x)]_{32}\hat{Z}^x_t - b(x)p\Big).
\end{equation} 
Plugging \eqref{eq:quadratic_costs}--\eqref{eq:foc_} into \eqref{eq:nec_cond} yields, formally, the linear FBSDE system
\begin{equation}
\label{eq:fbsde_ex_game}
\begin{aligned}
&d\begin{bmatrix}
\hat{X}^x_t
\\
p^x_t
\end{bmatrix}
=
\Gamma(x)
\begin{bmatrix}
\hat{X}^x_t
\\
p^x_t
\end{bmatrix}
+
\Gamma_Z(x)\hat{Z}^x_t
dt 
+
\begin{bmatrix}
1
\\
q^x_t
\end{bmatrix}
dB^x_t,
\\
&\hat{Z}^x_t = \int_I w(x,y)\mathbb{E}[\hat X^y_t]\lambda(dy),\quad 
\hat{X}^x_0 = \xi^x,\quad 
p^x_T
=
\Gamma_T^*(x)
\begin{bmatrix}
\hat{X}^x_T
\\
\hat{Z}^x_T
\end{bmatrix},\quad t\in[0,T],\ x\in I.
\end{aligned}
\end{equation}
where $\Gamma(x) \in \mathbb{R}^{2\times 2}$ and  $\Gamma_Z(x),\Gamma_T(x) \in \mathbb{R}^{2\times 1}$ depend on the coefficient functions $a,b,c$ and the cost matrices $C_f, C_h$. Their exact form is presented in the appendix. To ensure solvability of \eqref{eq:fbsde_ex_game} we enforce the following condition.

\begin{assumption}
\label{cond:riccati}
For all $x\in I$: (i) $\Gamma_{12}(x) \neq 0$; (ii) $-[C_h(x)]_{11}\Gamma_{12}(x) \geq 0$; (iii) $-\Gamma_{12}(x)\Gamma_{21}(x) > 0$; (iv) $[C_f(x)]_{22} > 0$.
\end{assumption}

\begin{theorem}
\label{thm:fbde}
There exists a unique solution $(\hat{X}, p,q) \in L^2_\boxtimes(\Omega\times I; E) \times L^2_\boxtimes(\Omega\times I; E)\times L^2_\boxtimes(\Omega\times I; L^2([0,T]))$ to the FBSDE 
\eqref{eq:fbsde_ex_game} for any finite time horizon $T>0$.
\end{theorem}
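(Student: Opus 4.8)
The plan is to exploit the fact that, by construction, the aggregate $\hat Z^x_t=\int_I w(x,y)\mathbb{E}[\hat X^y_t]\lambda(dy)$ appearing in \eqref{eq:fbsde_ex_game} is a \emph{deterministic} function of $(x,t)$ (in the spirit of Theorem~\ref{thm:z-det-linear}), so that the only genuine coupling between players is carried by this deterministic object. I would therefore solve the system in two stages: first decouple, player by player, the forward--backward dependence \emph{in time} by a scalar Riccati ansatz treating $\hat Z$ as a given deterministic input; and then close the system by solving a deterministic fixed-point problem for $\hat Z$ itself, using that the graphon operator has norm at most one (Lemma~\ref{lemma:extended-W}).

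For the first stage, fix a candidate deterministic aggregate $z\in L^2_\lambda(I;E)$ and, for each $x\in I$, look for the solution of the per-player linear FBSDE obtained from \eqref{eq:fbsde_ex_game} by freezing $\hat Z^x=z^x$ in the form $p^x_t=\eta(x)_t\,\hat X^x_t+\psi^x_t$. Matching the $\hat X^x$-linear terms forces $\eta(x)$ to solve a scalar backward Riccati ODE whose quadratic coefficient is $\Gamma_{12}(x)$ and whose terminal value is the $\hat X^x_T$-coefficient $[\Gamma_T(x)]_1$ of the terminal condition; the remaining terms force $\psi^x$ to solve a linear backward ODE driven by $z^x$ with terminal datum $[\Gamma_T(x)]_2\,z^x_T$, and an It\^o expansion identifies $q^x_t=\eta(x)_t$ (deterministic, hence automatically in $L^2_\boxtimes(\Omega\times I;L^2([0,T]))$). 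The crucial point is global solvability of the scalar Riccati equation on all of $[0,T]$, uniformly in $x$: a scalar Riccati equation may blow up in finite time, and Assumption~\ref{cond:riccati} is exactly what rules this out --- the sign conditions $\Gamma_{12}(x)\neq 0$, $-\Gamma_{12}(x)\Gamma_{21}(x)>0$, and $-[C_h(x)]_{11}\Gamma_{12}(x)\ge 0$ pin down the sign of the quadratic term and keep the terminal datum on the correct side, yielding a monotone, non-exploding trajectory with a bound depending only on the uniform bounds of $a,b,c,C_f,C_h$. Measurable dependence of ODE solutions on the $\mathcal{I}$-measurable coefficients gives $x\mapsto(\eta(x),\psi^x)$ measurable and uniformly bounded in $x$; the forward equation for $\hat X^x$ is then a linear SDE with bounded coefficients, uniquely solvable in $L^2(\Omega;E)$ for every $x$, and joint measurability together with $L^2_\boxtimes$-integrability follow exactly as in Lemma~\ref{lemma:well-posedness-of-X} and Theorem~\ref{thm:z-det-linear}.

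To close the loop, I would take expectations: writing $m^x_t=\mathbb{E}[\hat X^x_t]$, the mean solves a deterministic linear ODE whose inhomogeneity is governed by $\psi^x$ and by the term $[\Gamma_Z(x)]_1\,z^x_t$, while the consistency requirement reads $z=W m$ with $m=m(z)$. Since $z\mapsto\psi(\,\cdot\,;z)$ and then $z\mapsto m(z)$ are affine and continuous on $C([0,T];L^2_\lambda(I))$, the map $\Phi:z\mapsto W m(z)$ is affine, a fixed point of $\Phi$ furnishes the consistent aggregate, and substituting it back into the first stage reconstructs the full triple $(\hat X,p,q)$; conversely any solution of \eqref{eq:fbsde_ex_game} necessarily has a deterministic aggregate that is a fixed point of $\Phi$, so existence and uniqueness for the FBSDE reduce to existence and uniqueness of this fixed point. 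I would establish the latter via the Fredholm alternative: the linear part of $\Phi$ is compact, because $W$ is Hilbert--Schmidt and the mean-solution operators are integral operators in time, so it suffices to show that the homogeneous problem (zero initial mean, zero terminal datum) admits only the trivial solution $z=0$, and this injectivity I would extract from a Gronwall energy estimate exploiting $\|W\|\le 1$ and the sign structure of Assumption~\ref{cond:riccati}. (Alternatively, one may run $\Phi$ as a contraction on short subintervals and patch, the forward--backward-in-time coupling having already been removed by the Riccati step.)

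The main obstacle is the uniform-in-$x$ global solvability of the scalar Riccati equation on the whole horizon $[0,T]$: this is where Assumption~\ref{cond:riccati} does the essential work, and the estimates must be uniform in the label $x$ so that $\eta(\,\cdot\,)$ and $\psi^{\,\cdot}$ stay uniformly bounded and the downstream $L^2_\boxtimes$-bounds and the compactness and injectivity of $\Phi$ go through. The closing fixed-point step is comparatively routine once $\|W\|\le 1$ and the Riccati bounds are in hand; the only care needed there is to carry out the injectivity argument for the homogeneous aggregate problem on arbitrary $T$ rather than relying on a small-horizon contraction.
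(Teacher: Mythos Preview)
Your decoupling ansatz $p^x_t=\eta(x)_t\hat X^x_t+\psi^x_t$, the identification $q^x=\eta(x)$, and the use of Assumption~\ref{cond:riccati} to obtain a global, uniformly-in-$x$ bounded solution of the scalar Riccati ODE are exactly what the paper does in its Step~1. Where you diverge is in how you treat the residual forward--backward system for $(\hat X,\psi)$ coupled through the aggregate $z=W\mathbb{E}[\hat X]$: the paper proceeds by an induction-in-time argument (short-horizon fixed point on a subinterval whose length depends only on the coefficient bounds, then patch and iterate finitely many times to cover $[0,T]$), whereas your primary proposal is a one-shot Fredholm argument on the full horizon, using compactness of $W$.

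The Fredholm route is attractive and in principle legitimate, but the step you call ``comparatively routine'' is exactly where the difficulty sits: proving that the homogeneous aggregate problem has only the trivial solution on an \emph{arbitrary} horizon $T$. This is not a Gronwall energy estimate---Gronwall handles purely forward or purely backward inequalities, not forward--backward coupled ones over long time---and the sign conditions in Assumption~\ref{cond:riccati} have already been consumed by the per-player Riccati; they do not obviously supply a monotonicity structure for the remaining $(\mathbb{E}[\hat X],\psi)$ system coupled through $W$. Indeed, Section~\ref{sec:examples} of the paper derives a \emph{second}, aggregate-level Riccati equation~\eqref{eq:riccati_for_pi} and explicitly notes that its non-blow-up requires \emph{additional} hypotheses beyond Assumption~\ref{cond:riccati}; that is a warning sign against any direct global argument at the aggregate level. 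The injectivity you need is equivalent to the uniqueness half of the theorem, which the paper handles separately (Step~0) via the LQ structure rather than a Gronwall bound; you would have to invoke that argument, not an energy estimate with $\|W\|\le 1$.

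Your parenthetical alternative---run a short-time contraction for the aggregate and patch, the per-player forward--backward coupling having been removed by the Riccati step---is precisely the paper's existence proof (Steps~2--6). So that route is safe and essentially reproduces the paper. If you insist on the Fredholm variant, replace the Gronwall placeholder by the actual FBSDE uniqueness argument, or simply observe that once induction establishes well-posedness it already delivers the injectivity, at which point Fredholm becomes redundant.
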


From the $L^2$-solution granted by the theorem we can extract one version that satisfies \eqref{eq:fbsde_ex_game} for all $x\in I$ by replicating the argument from Theorem~\ref{thm:z-det-linear}.

While uniqueness follows by quite standard arguments, existence requires a more in-depth analysis. We argue in the appendix along the following lines: after making the ansatz that $p^x$ is linear in $\hat{X}^x$, we get short time existence following a fixed point argument similar to that of Theorem~\ref{thm:z-det-linear}, then we extend the existence to the whole time horizon $[0,T]$ with the induction method, see, \textit{e.g.}, \citep[Sec. 4.1.2]{carmona2018probabilistic}.

\section{Connection with $N$-player Network Games}
\label{sec:convergence}

In this section we show how certain finite player games where  players interact through a randomly-weighted graph can be approximated by a graphon game.
Let $I^\infty$ be the infinite 
countable Cartesian product of copies of $I$. Throughout this chapter, $x^\infty = (x_k)_{k=1}^\infty$ denotes a generic sequence in $I^\infty$ and $N\in \mathbb{N}$. For a fixed $x^\infty$, let $\mathcal{A}_N(x^\infty)$ denote the set of $\bigvee_{\ell=1}^N \mathbb{F}^{x_\ell}$-progressively measurable functions in $L^2([0,T]\times\Omega)$ (from now on referred to as the set of admissible controls in the $N$-player game). Consider the $N$-player game where player $k$ picks a control $\alpha^{k,N} \in \mathcal{A}_N(x^\infty)$ so as to minimize
\begin{equation*}
    J^{k,N}(\alpha^{k,N}; \alpha^{-k,N}) 
    := 
    \mathbb{E}\Big[\int_0^T f^{x_k}(X^{k,N}_t, \alpha^{k,N}_t, Z^{k,N}_t)dt 
    + h^{x_k}(X^{k,N}_T, Z^{k,N}_T)\Big]
\end{equation*}
and the player states $(X^{k,N})_{k=1}^N$ are subject to the dynamics
\begin{align*}
    dX^{k,N}_t &= \big(a(x_k)X^{k,N}_s + b(x_k)\alpha^{k,N}_t + c(x_k)Z^{k,N}_t\big)dt + dB^{x_k}_t, 
    \quad 
    X^{k,N}_0 = \xi^{x_k},
    \\
    Z^{k,N}_t &:= \frac{1}{N}\sum_{\ell= 1}^N w(x_k, x_\ell)X^{\ell,N}_t,\quad k=1,\dots, N,\ t\in[0,T].
\end{align*}

Assumption~\ref{assump:sec3.1} and \ref{cond:riccati} are in force as well as the additional assumptions on the cost functions introduced in Section~\ref{sec:FBSDE}.
With an approach very similar to that of Proposition~\ref{prop:nec}, we can derive necessary conditions for Nash equilibria in this $N$-player game. Given $x^\infty\in I^\infty$, a Nash equilibrium in the $N$-player game satisfies $\mathbb{P}$-a.s., a.e.-$t$, for all $k=1,\dots, N$,
\begin{equation*}
    \hat{\alpha}^{k,N}_t 
    = 
    -\frac{1}{[C_f(x_k)]_{22}}\Big(b(x_k)p^{k k,N}_t + [C_f(x_k)]_{21}X^{k,N}_t + [C_f(x_k)]_{23}Z^{k,N}_t\Big),
\end{equation*}
where the $N$ state- and costate variables $(X^{k,N},p^{kk,N})_{k=1}^N$ solve the forward-backward system
\begin{align*}
    &d\begin{bmatrix}
    X^{k,N}_t
    \\
    p^{kk,N}_t
    \end{bmatrix}
    =
    \left(\Gamma(x_k)\begin{bmatrix}
    X^{k,N}_t
    \\
    p^{kk,N}_t
    \end{bmatrix}
    +
    \Gamma_Z(x_k) Z^{k,N}_t
    +
    \begin{bmatrix}
    0
    \\
    -\frac{1}{N}\sum_{\ell=1}^N c(x_\ell) w(x_k,x_\ell)p^{k\ell,N}_t
    \end{bmatrix}\right)dt 
    \\
    &\hspace{2cm} + 
    \begin{bmatrix}
    dB^{x_k}
    \\
    \sum_{\ell=1}^N q^{kk\ell,N}_tdB^{x_\ell}
    \end{bmatrix},\quad Z^{k,N}_t := \frac{1}{N}\sum_{\ell=1}^N w(x_k,x_\ell)X^{\ell,N}_t,\quad t\in[0,T],
    \\
    &X^{k,N}_0 = \xi^{x_k},\quad p^{kk,N}_T = \Gamma^*_T(x_k)\begin{bmatrix}
    X^{k,N}_T
    \\
    Z^{k,N}_T
    \end{bmatrix},\quad k=1,\dots, N,
\end{align*}
and are coupled with the off-diagonal costate variables $(p^{kh}, (q^{kh\ell})_{\ell=1}^N)_{k\neq h=1}^N$, solving the backward system below where $\bar \Gamma, \bar\Gamma_T : I\rightarrow \mathbb{R}^{2\times 1}$ and $\bar\Gamma_Z : I \rightarrow \mathbb{R}$:
\begin{align*}
    &dp^{kh,N}_t 
    = \frac{w(x_k,x_h)}{N}\Big(\bar\Gamma(x_k)
    \begin{bmatrix}
    X^{k,N}_t
    \\
    p^{kk,N}_t
    \end{bmatrix} 
    +
    \bar\Gamma_Z(x_k)Z^{k,N}_t\Big)dt
    \\
    &\hspace{2cm}- 
    \Big(\frac{1}{N}\sum_{\ell=1}^N c(x_\ell)w(x_h,x_\ell)p^{k\ell,N}_t + a(x_h) p^{kh,N}_t \Big)dt 
    + \sum_{\ell=1}^N q^{kh\ell,N}_tdB^{x_\ell}_t,
    \\ 
    &p_T^{kh,N} = \frac{w(x_k,x_h)}{N}\bar \Gamma^*_T(x_k)\begin{bmatrix}
    X^{k,N}_T
    \\
    Z^{k, N}_T
    \end{bmatrix} ,\quad 1\leq h\neq k\leq N,\ t\in [0,T].
\end{align*}
This finite FBSDE system is linear with bounded coefficients. With additional assumptions (to insure solvability of a matrix Riccati equation), it can be analyzed with the induction approach as was done in Theorem~\ref{thm:fbde}. We abstain from presenting the argument, instead we move on assuming that the solution to the coupled system above is well-defined and of sufficient regularity for the following analysis.

\subsection{Propagation of chaos}

Assume that the conditions from earlier sections are satisfied and denote by $(X^x,p^x,q^x)_{x\in I}$ the solution to the graphon game FBSDE.  By comparing the graphon game FBSDE and the $N$-player game FBSDE along a specific sequence $x^\infty$, we obtain a propagation of chaos-type result. Let 
\begin{align*}
    &\Delta(x^\infty, N)
    :=  
    \\
    &\max_{1\leq k\leq N}
    \Big(\mathbb{E}\Big[\sup_{t\in[0,T]}\big(|X^{k,N}_t - X^{x_k}_t|^2 \hspace{-3pt}
    + 
    \hspace{-3pt}
    |p^{k k,N}_t - p^{x_k}_t|^2\big) \Big]
    + \sup_{t\in[0,T]}\mathbb{E}\Big[|Z^{x_k,N}_t - Z^{x_k}_t|^2\Big]\Big).
\end{align*}
To derive statistical estimates for the rate at which $\Delta(\cdot, N)$ tends to zero, we introduce the iteratively completed infinite product space $(I^\infty, \bar{\mathcal{I}}^\infty,\bar\lambda^\infty)$, defined in line with \citep{hammond2006essential}. The $\sigma$-algebra $\bar{\mathcal{I}}^\infty$ extends $\mathcal{I}^\infty$ by including sets that are "iteratively null". An important feature of this setup that we will frequently use is that $(B^x, \xi^x)_{x\in x^\infty}$ are mutually independent for $\bar\lambda^\infty$-a.e. $x^\infty \in I^\infty$ \citep[Theorem 1]{hammond2016one}. We strengthen the assumptions on the graphon to prove the convergence.

\begin{assumption}
\label{cond:poc-and-cont}
    The mapping $I\ni x\mapsto w(x,y)\in \mathbb{R}$ is $1/2$-Hölder continuous, uniformly in $y\in I$.
\end{assumption}

\begin{proposition}
\label{prop:poc-conv-rate}
Let Assumption~\ref{assump:sec3.1} and \ref{cond:riccati} hold. Then
\begin{equation*}
    \Delta(x^\infty, N) \xrightarrow[N \to +\infty]{} 0,\qquad \bar\lambda^\infty\text{-a.e. } x^\infty\in I^\infty.
\end{equation*}
If furthermore Assumption~\ref{cond:poc-and-cont} also holds, then for all $\varepsilon > 0$ there exists a $N_\varepsilon : I^\infty \to \mathbb{N}$ such that
\begin{equation*}
     \bar\lambda^{\infty}\Big(\Delta(x^\infty, N) \leq \frac{(C+\varepsilon)^2\log\log N}{N},\ N \geq N_\varepsilon(x^\infty) \Big) = 1,
\end{equation*}
 where $C$ is a finite positive constant depending only on $T$ and the graphon $w$.
\end{proposition}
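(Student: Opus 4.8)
The plan is to compare, player by player, the $N$-player FBSDE system against the graphon game FBSDE evaluated along the sampled sequence $x^\infty$, and to control the discrepancy $\Delta(x^\infty,N)$ by a combination of Gronwall-type stability estimates for the forward-backward system and a law-of-large-numbers control of the aggregate sampling error. First I would write the difference equations for $X^{k,N}-X^{x_k}$ and $p^{kk,N}-p^{x_k}$, noting that the driving discrepancy is the difference between the empirical aggregate $Z^{k,N}_t=\frac1N\sum_{\ell=1}^N w(x_k,x_\ell)X^{\ell,N}_t$ and the deterministic graphon aggregate $Z^{x_k}_t=\int_I w(x_k,y)\mathbb{E}[X^y_t]\lambda(dy)$. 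I would split this aggregate error into two parts:
\begin{equation*}
    Z^{k,N}_t - Z^{x_k}_t = \underbrace{\frac1N\sum_{\ell=1}^N w(x_k,x_\ell)\big(X^{\ell,N}_t - X^{x_\ell}_t\big)}_{\text{propagation term}} + \underbrace{\Big(\frac1N\sum_{\ell=1}^N w(x_k,x_\ell)X^{x_\ell}_t - \int_I w(x_k,y)\mathbb{E}[X^y_t]\lambda(dy)\Big)}_{\text{sampling term}}.
\end{equation*}
The propagation term is bounded by $\|w\|_\infty$ times the maximal per-player error and is absorbed into the left-hand side via Gronwall; the sampling term is the genuine stochastic fluctuation that must be estimated through a law of large numbers along the i.i.d.-after-relabeling sequence $x^\infty$.

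The key steps, in order, are: (1) establish a deterministic stability estimate showing that $\Delta(x^\infty,N)$ is controlled by $\max_k \sup_t \mathbb{E}$ of the squared sampling term, using the standard FBSDE a priori estimates valid because the system is linear with bounded coefficients and the Riccati condition (Assumption \ref{cond:riccati}) guarantees the decoupling field is well-defined (this is exactly the induction/decoupling structure from Theorem~\ref{thm:fbde}); (2) for the almost-sure convergence to zero, apply the strong law of large numbers to the sampling term, which is legitimate because $(B^x,\xi^x)_{x\in x^\infty}$ are mutually independent for $\bar\lambda^\infty$-a.e. $x^\infty$ by \citep[Theorem 1]{hammond2016one}, so that $\big(w(x_k,x_\ell)X^{x_\ell}_t\big)_\ell$ are, conditionally on $x^\infty$, independent across $\ell$ with common mean $\int_I w(x_k,y)\mathbb{E}[X^y_t]\lambda(dy)$; (3) for the rate, upgrade the SLLN to the Law of the Iterated Logarithm. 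Since the relevant object is the $E$-valued (trajectory-valued) average, I would invoke the LIL for Banach-space-valued i.i.d. random variables, which yields that the sampling term, as an element of $L^2(\Omega;E)$ or after taking $\sup_t\mathbb{E}[\,\cdot\,^2]$, is of order $\sqrt{(\log\log N)/N}$ almost surely, giving the stated bound $\Delta \le (C+\varepsilon)^2(\log\log N)/N$.

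The Hölder assumption (Assumption \ref{cond:poc-and-cont}) enters precisely at the rate step: to apply the Banach-valued LIL with a uniform constant $C$ depending only on $T$ and $w$, I need the summands $w(x_k,x_\ell)X^{x_\ell}$ to have a common structure whose second moment and Banach-space geometry are controlled uniformly in the index $x_k$, and the $1/2$-Hölder continuity in the first variable is what lets me pass from the index-wise LIL to a bound uniform over the continuum of labels $x_k$ appearing as $k$ ranges up to $N$ — in effect it provides an equicontinuity that turns pointwise-in-$x_k$ LIL bounds into the single constant $C=C(T,w)$. The main obstacle I anticipate is establishing the LIL with the correct uniform constant in the infinite-dimensional setting: the classical LIL requires verifying a central-limit-type condition (pre-Gaussianity / the CLT holding in $E$) for the $E$-valued summands, and then controlling the cluster set uniformly over the label $x_k$; reconciling the supremum over $k\le N$ (whose number grows with $N$) with an almost-sure LIL statement that is inherently about a fixed sequence will require either a careful union-bound argument leveraging the Hölder modulus to discretize the label space, or an application of the LIL to a single $E$-valued (or $C(I;E)$-valued) random variable encoding the whole family simultaneously. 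I expect the cleanest route is the latter: view $\ell\mapsto \big(x\mapsto w(x,x_\ell)X^{x_\ell}\big)$ as taking values in the separable Banach space $C(I;E)$ (separability and the requisite Gaussian-type integrability following from the Hölder continuity and the moment bounds of Assumption \ref{assump:sec3.1}), and apply the Banach-valued LIL once to this single sequence, reading off the uniform-in-$x_k$ rate as the norm bound in $C(I;E)$.
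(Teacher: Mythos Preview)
Your overall architecture matches the paper's, but there are two concrete gaps.

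First, you have not accounted for the off-diagonal costate variables. The $N$-player Hamiltonian system carries adjoint processes $p^{kh,N}$ for \emph{all} pairs $1\le k,h\le N$, not just the diagonal $p^{kk,N}$; the off-diagonal ones enter the backward equation for $p^{kk,N}$ through the coupling term $\frac{1}{N}\sum_{\ell} c(x_\ell) w(x_k,x_\ell)p^{k\ell,N}_t$. Your stability estimate cannot close until you show $\max_{k\neq h}\mathbb{E}[\sup_t|p^{kh,N}_t|^2]=O(1/N)$. The paper handles this by comparing $(p^{kh,N})_{h\neq k}$ to an auxiliary linear BSDE with zero terminal condition and zero forcing, whose unique solution is identically zero; the difference then absorbs the $w(x_k,x_h)/N$-sized source terms and yields the required $O(1/N)$ bound.

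Second, your treatment of the sampling term conflates the two layers of randomness. Conditionally on $x^\infty$, the variables $w(x_k,x_\ell)X^{x_\ell}_t$ are $\mathbb{P}$-independent across $\ell$, but their $\mathbb{P}$-means are $w(x_k,x_\ell)\mathbb{E}[X^{x_\ell}_t]$, \emph{not} the common value $\int_I w(x_k,y)\mathbb{E}[X^y_t]\lambda(dy)$; so an SLLN or LIL in $\omega$ does not center at the graphon aggregate. The paper separates the two layers: add and subtract $\frac{1}{N}\sum_\ell w(x,x_\ell)\mathbb{E}[X^{x_\ell}_t]$, so that the $\omega$-fluctuation term $\frac{1}{N}\sum_\ell w(x,x_\ell)(X^{x_\ell}_t-\mathbb{E}[X^{x_\ell}_t])$ has second moment $O(1/N)$ by independence, while the residual $\frac{1}{N}\sum_\ell w(x,x_\ell)\mathbb{E}[X^{x_\ell}_t]-\int_I w(x,y)\mathbb{E}[X^y_t]\lambda(dy)$ is \emph{deterministic in $\omega$} and is a genuine i.i.d.\ average on $(I^\infty,\lambda^\infty)$. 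It is to this latter, $\omega$-free, quantity that the Banach-valued LIL is applied, with summands $\theta_\ell:(t,x)\mapsto w(x,x_\ell)\mathbb{E}[X^{x_\ell}_t]-m(t,x)$ viewed as elements of $C([0,T]\times I)$. Your instinct to embed into a function space over the label (to obtain uniformity in $x_k$) is exactly right, but you apply it to the wrong, still $\omega$-random, object. The H\"older assumption is used precisely as you anticipate: it supplies the Kolmogorov--Chentsov modulus needed to verify tightness, hence the CLT, in $C([0,T]\times I)$, which is the hypothesis feeding the Ledoux--Talagrand LIL.
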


\begin{remark}
Under a different set of conditions, the rate of convergence can be shown to be $1/N$ with high probability. Indeed, if the graphon is of rank $1$ and $a,b,c,C_f,C_h$ are constant, it is possible to prove under some conditions on these constants and the eigenvalues of the graphon operator, that
\begin{equation*}
     \bar\lambda^{\infty}\Big(\Delta(x^\infty, N) \leq \frac{C}{N}\Big) > 1-2e^{-2C^2},
\end{equation*}
for all $C \geq \bar{C} > 0$, with $\bar C$ a finite constant depending only on $T$ and the graphon $w$. Importantly, this set of conditions does not require continuity of the graphon, and it covers the important class of piecewise constant graphons, associated with the family of stochastic block models rich in applications.
\end{remark}

\subsection{Convergence and approximation of $N$-player game Nash equilibrium}

The propagation of chaos type result contained in Proposition \ref{prop:poc-conv-rate} directly yields two $\bar\lambda^{\infty}$ - a.s. results relating the Nash equilibria of $N$-player and graphon game: 1) the $N$-player Nash equilibria converge toward the graphon game Nash equilibria; 2) the graphon game Nash equilibria provide approximate $N$-player game Nash equilibria.

Let $C$ be the coefficient in Proposition~\ref{prop:poc-conv-rate} and let $\varepsilon_N := 2C\sqrt{N^{-1}\log\log N}$. Let $\underline{N}$ be the random variable $N_\varepsilon$ from Proposition~\ref{prop:poc-conv-rate} with $\varepsilon = C$. As before, we denote by $(\hat{\alpha}^{k,N})_{k=1}^N$ and $(\hat \alpha^x)_{x\in I}$ the Nash equilibria for the $N$-player game and the graphon game, respectively. 

\begin{proposition}
\label{prop:eps-eq}
The graphon game Nash equilibrium strategy collection $(\hat\alpha^{x_k})_{k=1}^N$ forms an $\varepsilon_N$-Nash equilibrium for the $N$-player game between the players $(x_1,\dots, x_N)$ when $N\geq \underline{N}(x^\infty)$, $\bar\lambda^\infty$-a.s. That is, for all $\beta \in \mathcal{A}_N(x^\infty),\ k=1,\dots, N,\ N\geq\underline{N}(x^\infty)$:
\begin{equation*}
    J^{k,N}(\hat{\alpha}^{x_k}; \bar{\alpha}^{-k,N}) - J^{k,N}(\beta; \bar{\alpha}^{-k,N}) \leq \varepsilon_N,\quad \bar\lambda^\infty\text{-a.e. }x^\infty\in I^\infty,
\end{equation*}
where $\bar{\alpha}^{-k,N} := (\hat{\alpha}^{x_1},\dots, \hat{\alpha}^{x_{k-1}},\hat{\alpha}^{x_{k+1}},\dots, \hat{\alpha}^{x_N})$. 

Moreover, the $N$-player game equilibrium converges componentwise to the graphon game equilibrium and the rate of convergence is uniform and at most $\varepsilon_N$:
\begin{equation*}
    \max_{1\leq k\leq N}\mathbb{E}\Big[\int_0^T|\hat\alpha^{k,N}_t - \hat\alpha^{x_k}_t|^2dt\Big] \leq \varepsilon_N^2,\quad N\geq \underline{N},\ \bar\lambda^\infty\text{-a.e. }x^\infty\in I^\infty.
\end{equation*}
\end{proposition}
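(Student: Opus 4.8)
The plan is to prove the two assertions separately, deriving both from the propagation-of-chaos rate of Proposition~\ref{prop:poc-conv-rate}: under the present choice $\varepsilon=C$ that proposition gives $\Delta(x^\infty,N)\le\varepsilon_N^2$ for all $N\ge\underline N(x^\infty)$, $\bar\lambda^\infty$-a.s., which is the only input I will use about the $N$-player equilibrium. The convergence assertion is the easier one, so I would dispatch it first. Writing the equilibrium controls in feedback form, the graphon control $\hat\alpha^{x_k}_t$ and the $N$-player control $\hat\alpha^{k,N}_t$ are the \emph{same} affine function of $(X,p,Z)$ evaluated at $(X^{x_k}_t,p^{x_k}_t,Z^{x_k}_t)$ and at $(X^{k,N}_t,p^{kk,N}_t,Z^{k,N}_t)$, respectively, the gains being bounded uniformly in $x$ under Assumption~\ref{assump:sec3.1} and Assumption~\ref{cond:riccati}(iv). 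Hence $|\hat\alpha^{k,N}_t-\hat\alpha^{x_k}_t|^2$ is controlled by a constant multiple of $|X^{k,N}_t-X^{x_k}_t|^2+|p^{kk,N}_t-p^{x_k}_t|^2+|Z^{k,N}_t-Z^{x_k}_t|^2$; integrating in $t$, taking expectations, and bounding the time integrals by the suprema appearing in $\Delta$ gives $\max_k\mathbb{E}[\int_0^T|\hat\alpha^{k,N}_t-\hat\alpha^{x_k}_t|^2dt]\le C'\,\Delta(x^\infty,N)\le C'\varepsilon_N^2$, the constant $C'$ being absorbed into $C$ (hence into $\varepsilon_N$).

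For the $\varepsilon_N$-Nash assertion I would use a three-term sandwich around the graphon cost. By Theorem~\ref{thm:z-det-linear} the equilibrium aggregate $Z^{x_k}$ is deterministic, so $\mathcal{J}^{x_k}(\,\cdot\,;Z^{x_k})$ is a decoupled convex control problem whose minimizer over $\mathcal{A}(x_k)$ is $\hat\alpha^{x_k}$. Fix $k$ and a deviation $\beta\in\mathcal{A}_N(x^\infty)$; we may assume $J^{k,N}(\beta;\bar\alpha^{-k,N})\le J^{k,N}(\hat\alpha^{x_k};\bar\alpha^{-k,N})$ (otherwise the inequality is trivial), which furnishes an a priori $L^2$-bound on $\beta$ and on the corresponding state. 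Writing $Z^{k,N}[\gamma]$ for the aggregate of player $k$ when they play $\gamma$ and the others play their graphon controls, I would decompose
\[
J^{k,N}(\hat\alpha^{x_k};\bar\alpha^{-k,N})-J^{k,N}(\beta;\bar\alpha^{-k,N})
= e_1 + \big[\mathcal{J}^{x_k}(\hat\alpha^{x_k};Z^{x_k})-\mathcal{J}^{x_k}(\beta;Z^{x_k})\big] + e_2,
\]
with $e_1:=J^{k,N}(\hat\alpha^{x_k};\bar\alpha^{-k,N})-\mathcal{J}^{x_k}(\hat\alpha^{x_k};Z^{x_k})$ and $e_2:=\mathcal{J}^{x_k}(\beta;Z^{x_k})-J^{k,N}(\beta;\bar\alpha^{-k,N})$. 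The middle bracket is $\le 0$: it is non-positive over $\mathcal{A}(x_k)$ by the graphon equilibrium property, and the extension of this optimality to the larger class $\mathcal{A}_N(x^\infty)$ follows because $Z^{x_k}$ is deterministic and the cost convex, so conditioning $\beta$ on $\mathbb{F}^{x_k}$ and applying Jensen's inequality cannot decrease the cost (the extra randomness carried by the $B^{x_\ell}$, $\ell\ne k$, being independent of $B^{x_k}$). It thus remains to bound $e_1$ and $e_2$.

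Both error terms are controlled by the discrepancy between the relevant $N$-player aggregate and the deterministic graphon aggregate $Z^{x_k}$. For $e_1$ the comparison is the all-graphon scenario, in which every player $\ell$ uses the frozen open-loop control $\hat\alpha^{x_\ell}$; running the forward estimate underlying Proposition~\ref{prop:poc-conv-rate} on the state equation alone (controls fixed) bounds $\sup_t\mathbb{E}|Z^{k,N}[\hat\alpha^{x_k}]_t-Z^{x_k}_t|^2$ and the associated state discrepancy by a quantity of the same order as $\Delta$, i.e.\ $O(\varepsilon_N^2)$. For $e_2$ the extra ingredient is that a single player's influence on their own aggregate is $O(1/N)$: since player $k$ enters every aggregate with weight $w/N$ and $\beta$ has a priori bounded cost, Gronwall's lemma gives $\|Z^{k,N}[\beta]-Z^{k,N}[\hat\alpha^{x_k}]\|=O(1/N)$ uniformly in such $\beta$, which together with the $e_1$-estimate yields $\|Z^{k,N}[\beta]-Z^{x_k}\|=O(\varepsilon_N+1/N)$. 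Feeding these aggregate bounds into the quadratic costs—using the a priori second moments of the states and controls together with Cauchy--Schwarz, as the difference of two quadratics factors into (difference)$\times$(sum)—gives $|e_1|,|e_2|=O(\sqrt{\Delta}+1/N)=O(\varepsilon_N)$, and after tracking the constants into $C$ the total is at most $\varepsilon_N$.

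The main obstacle I anticipate is the lower bound, namely the $e_2$ estimate together with the optimality extension. Two points require care. First, $\|Z^{k,N}[\beta]-Z^{x_k}\|=O(\varepsilon_N+1/N)$ must hold \emph{uniformly} over admissible deviations; this is why one first restricts to $\beta$ with cost no larger than that of $\hat\alpha^{x_k}$, in order to obtain the a priori bound on $\beta$ needed to close the Gronwall estimate for the $O(1/N)$ self-influence. Second, because $\mathcal{A}_N(x^\infty)$ strictly contains $\mathcal{A}(x_k)$, one must justify that the graphon optimum is not improved by controls adapted to the enlarged filtration, which is precisely where the determinism of $Z^{x_k}$ (Theorem~\ref{thm:z-det-linear}) and the convexity of the cost are essential. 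By contrast, once the forward-only version of the propagation-of-chaos argument is in hand, the estimates for $e_1$ and for the convergence assertion are routine.
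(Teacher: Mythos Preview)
Your convergence argument matches the paper's: both note that the equilibrium controls are the same affine function of state, costate, and aggregate, so the control difference is dominated by $\Delta(x^\infty,N)$ via Proposition~\ref{prop:poc-conv-rate}. For the $\varepsilon_N$-Nash claim, however, you take a genuinely different route. The paper sandwiches through the \emph{$N$-player} Nash equilibrium, writing
\begin{align*}
&J^{k,N}(\hat\alpha^{x_k};\bar\alpha^{-k,N})-J^{k,N}(\beta;\bar\alpha^{-k,N})\\
&\qquad\le \bigl|J^{k,N}(\hat\alpha^{x_k};\bar\alpha^{-k,N})-J^{k,N}(\hat\alpha^{k,N};\hat\alpha^{-k,N})\bigr|
+\bigl|J^{k,N}(\beta;\hat\alpha^{-k,N})-J^{k,N}(\beta;\bar\alpha^{-k,N})\bigr|,
\end{align*}
dropping the hidden middle term $J^{k,N}(\hat\alpha^{k,N};\hat\alpha^{-k,N})-J^{k,N}(\beta;\hat\alpha^{-k,N})\le 0$ by the $N$-player Nash property, and then bounding both remaining terms by the already-proved control convergence $\max_k\|\hat\alpha^{x_k}-\hat\alpha^{k,N}\|\le\varepsilon_N$. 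You instead sandwich through the \emph{graphon} cost $\mathcal{J}^{x_k}(\cdot;Z^{x_k})$ and drop the middle term by graphon optimality. The paper's decomposition is shorter and sidesteps your filtration-enlargement issue entirely, but it relies on the $N$-player equilibrium being well-posed---something the paper itself only assumes rather than proves in the discussion preceding Section~4.1. Your route establishes the $\varepsilon_N$-Nash statement from the graphon equilibrium alone plus a forward-only propagation-of-chaos estimate, which is the more standard MFG argument and is conceptually more self-contained. Your handling of the two points you flag as delicate---the a priori bound on $\beta$ via restriction to profitable deviations, and the extension of graphon optimality to $\mathcal{A}_N(x^\infty)$ via Jensen using the determinism of $Z^{x_k}$ and the linearity of the state equation in the control---is correct, and is in fact more explicit on the uniformity-in-$\beta$ issue than the paper's own sketch.
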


\section{Semi-explicit solutions in the case of constant coefficients}
\label{sec:examples}

When the functions $a,b,c, C_f,$ and $C_h$ are constant over $I$, the eigenfunction expansion \eqref{eq:graphon_decomp} facilitates a reformulation of the FBSDE into a countable system of decoupled ODEs. Note that if the graphon is of finite rank, the system is in fact of finite size. This idea was already used in \citep{gao2020lqg}. Here we summarize it, comment on the well-posedness of the resulting ODE system, and numerically solve an example.

First, recall from the proof of Theorem~\ref{thm:fbde} that the FBSDE can, by using the linear ansatz $p^x_t = \eta^x_t\hat{X}^x_t + \zeta^x_t$, be reduced to a forward-backward system for $(\hat X^x, \zeta^x)_{x\in I}$ and a Riccati equation for $\eta^x$:
\begin{equation}
\label{eq:eta-zeta}
    \begin{aligned}
    &\dot{\eta}^x_t = - (\eta^x_t)^2\Gamma_{12}(x) - \eta^x_t(\Gamma_{11}(x) + \Gamma_{21}(x) - \Gamma_{22}(x)) + \Gamma_{21}(x),& &\eta^x_T = \Gamma_{T,1}(x),
    \\
    &d\begin{bmatrix}
    \hat{X}_t^x
    \\
    \zeta^x_t
    \end{bmatrix}
    =
    \begin{bmatrix}
    \Gamma_{11}(x) + \eta^x_t\Gamma_{12}(x) & \Gamma_{12}(x)
    \\
    0 & \Gamma_{22}(x) - \eta^x_t\Gamma_{12}(x)
    \end{bmatrix}
    \begin{bmatrix}
    \hat{X}_t^x
    \\
    \zeta^x_t
    \end{bmatrix}
    dt
    \\
    &\hspace{2cm}+
    \begin{bmatrix}
    \Gamma_{Z,1}(x)
    \\
    \Gamma_{Z,2}(x) - \eta^x_t\Gamma_{Z,1}(x)
    \end{bmatrix}\hat{Z}^x_t
    dt
    +
    \begin{bmatrix}
    1
    \\
    0
    \end{bmatrix}
    dB^x_t,& &x\in I,\ t\in[0,T],
    \\
    &
    \hat{X}^x_0 = \xi^x,\quad
    \zeta^x_T =
    \Gamma_{T,2}(x)\hat{Z}^x_T,\quad 
    \hat{Z}^x_t = \int_I w(x,y)\mathbb{E}[\hat{X}^y_t]\lambda(dy),& &x\in I,\ t\in[0,T].
    \end{aligned}
\end{equation}
The Riccati equation does not depend on $(\hat{X}^x, \zeta^x)$ and can be solved independently. In the case of constant coefficients, the Riccati equation is also independent of $x$ and we then denote its solution $(\eta_t)_{t\in[0,T]}$. The next assumption formalizes the constant-coefficient condition.
\begin{assumption}
The functions $\nu, a, b, c, C_f, C_h$ are constant.
\end{assumption}

The ansatz coefficient $\zeta^x$ and the aggregate $\hat{Z}^x$ solve the coupled forward-backward ODE
\begin{equation}
\label{eq:sys_for_zeta_and_z}
    \begin{aligned}
    &\dot{\zeta}^x_t = \zeta^x_t\left(\Gamma_{22}-\Gamma_{12}\eta_t\right) +\hat{Z}^x_t\left(\Gamma_{Z,2}-\Gamma_{Z,1}\eta_t\right),& &\zeta^x_T = \Gamma_{T,2}\hat{Z}^x_T,
    \\
    &\frac{d}{dt}\hat{Z}^x_t = (\Gamma_{11} + \Gamma_{12}\eta_t)\hat{Z}^x_t + \Gamma_{12}[W\zeta^\cdot_t](x) + \Gamma_{Z,1}[W\hat{Z}^\cdot_t](x),& &\hat{Z}^x_0 = [W\xi^\cdot](x).
    \end{aligned}
\end{equation}
At this point, we use the expansion \eqref{eq:graphon_decomp} and in the eigendirection $\phi_k$, $k\in\mathbb{N}$, we get
\begin{equation}
\label{eq:sys_for_z_and_zeta}
    \begin{aligned}
        &\dot{v}^k_t = v^k_t(\Gamma_{22}-\Gamma_{12}\eta_t) + z^k_t(\Gamma_{Z,2}-\Gamma_{Z,1}\eta_t),& &v^{k}_T = \Gamma_{T,2}z^k_T,
        \\
        &\dot{z}^k_t = \big(\Gamma_{11} + \Gamma_{12}\eta_t + \Gamma_{Z,1}\lambda_k\big)z^k_t + \Gamma_{12}\lambda_k v^{k}_t,& &z^{k}_0 = \lambda_k x^{k}.
    \end{aligned}
\end{equation}
where $v^k_t := \langle \zeta_t, \phi_k\rangle_{\lambda_I}$, $z^k_t := \langle \hat{Z}_t,\phi_k\rangle_{\lambda_I}$, and $x^k := \langle \xi, \phi_k\rangle_{\lambda_I}$. Further analysis of the coupled system \eqref{eq:sys_for_z_and_zeta} with the ansatz $v^k_t = \pi^k_tz^k_t$, where $\pi^k$ is a deterministic function of time to be determined, yields a Riccati equation with time-varying coefficients for $\pi^k$:
\begin{equation}
\label{eq:riccati_for_pi}
\begin{aligned}
    &\dot{\pi}^k_t 
    = 
    -\Gamma_{12}\lambda_k\big(\pi^k_t\big)^2 - \left(\Gamma_{11} + 2\Gamma_{12}\eta_t - \Gamma_{22} + \Gamma_{Z,1}\lambda_k\right)\pi^k_t - (\Gamma_{Z,2} - \Gamma_{Z,1}\eta_t),
    \\
    &
    \pi^k_T 
    =
    \Gamma_{T,2}. 
\end{aligned}
\end{equation}
Existence and uniqueness of a non-blow up solution to \eqref{eq:riccati_for_pi} requires further assumptions on the coefficients. In the appendix, we derive a set of sufficient conditions. With $\pi^k$ at hand it is a simple task to solve \eqref{eq:sys_for_z_and_zeta} for $z^k$ and $v^k$ ($x^k$ is computed with the Exact Law of Large Numbers), and 
\begin{equation*}
        [W\hat{Z}_t](x) = \sum_{k=1}^\infty \lambda_k z^k_t\phi_k(x),\qquad [W\zeta_t](x) = \sum_{k=1}^\infty \lambda_k v^k_t\phi_k(x).
\end{equation*}
The system \eqref{eq:sys_for_zeta_and_z} has been decoupled and rewritten into a countable set of ODEs.

\subsection{Comparison of three graphons' impact on the graphon game Nash equilibrium}

Using the semi-explicit solution outlined above it is easy to numerically solve the graphon game. Let us compare the impact of three graphons on the solution: the constant graphon, the power-law graphon, and the min-max graphon.

If the graphon is constant, $w_C(x,y) = K$, then the graphon game is equivalent to a MFG. Indeed, in this case  $\hat{Z}^x_t = K\int_I\mathbb{E}[\hat X^x_t]\lambda(dx)$ and one can show that $\hat{Z}^x_t = K\mathbb{E}[\hat X^0_t] = K\mathbb{E}[\hat X^y_t]$ for $x,y\in I$, $t\in[0,T]$. The role of each player $x\in I$ is that of the representative player in the MFG. The power-law graphon is defined as $w_{PL}(x,y) := (xy)^{-\gamma}$, $\gamma\in\mathbb{R}$. It has applications in dynamical systems theory, see \textit{e.g.} \citep{medvedev2018kuramoto} where it is used to model coupled systems on scale-free graphs. The power-law graphon induces an operator of rank $1$. If $\gamma \leq 0$, $w_{PL}$ is bounded with eigenvalue $\lambda_{PL} = (1-2\gamma)^{-1}$ and eigenfunction $\phi_{PL}(x) = \sqrt{\lambda_{PL}}x^{-\gamma}$. The min-max graphon $w_{MM}(x, y) = \min(x, y)(1 - \max(x, y))$ is not of finite rank. The orthonormal basis of eigenfunctions is given by $\phi_{k,MM}(x) = \sqrt{2} \sin(\pi k x)$ with corresponding eigenvalues $\lambda_{k,MM} = (\pi k)^{-2}$ for $k \geq 1$, see \citep{avella2018centrality}.

For the sake of presentation we consider a particularly simple case of the linear-quadratic graphon game. The parameter values are chosen for the purpose of visualization of the effects we want to highlight with numerical simulation, but can be taken arbitrarily from within our theoretical assumptions. The cost and state equation of agent $x$ are set to
\begin{equation}
\label{example:cost}
    \frac{1}{2}\mathbb{E}\Big[\int_0^3 \big((\alpha^x_t)^2 + (X^x_t-Z^x_t)^2\big) dt + (X_T^x - Z^x_T)^2\Big]
\end{equation}
and
\begin{equation}
\label{example:dynamics}
\begin{aligned}
    dX^x_t &= \left(-X^x_t + \alpha^x_t + Z^x_t \right)dt + dB^x_t,\quad X^x_0 = \xi^x \sim \text{Normal}(8,1/4),
    \\
    Z^x_t &= \int_Iw(x,y)\mathbb{E}[X^y_t]\lambda(dy),\quad x\in I,\ t\in[0,T].
\end{aligned}
\end{equation}
Here, Assumption~\ref{cond:riccati} holds and if $|\lambda_k|\leq 1$, which is the case for the three graphons we consider, then the time-varying Riccati equation \eqref{eq:riccati_for_pi} does not blow up in finite time. We simulate \eqref{example:cost}--\eqref{example:dynamics} using the semi-explicit solution derived above. For the constant graphon, we set $K=1$, and for the power-law graphon, we set $\gamma = -0.4$. The simulation uses standard discretization techniques to compute the state trajectory for a finite number of values of $x$: $(x_m)_{m = 1}^M$. We set $M = 200$ and sample the indices $(x_m)_{m = 1}^M$ from the uniform distribution over $I$. (The choice of indices here has no effect on the computation of the Nash equilibrium.) 

The simulation results are presented in Figure~\ref{fig:my_label}. In the constant graphon case we expect the state to mean-revert to the mean of the initial conditions, as is the case for the MFG corresponding to this specific setting. The simulation result agrees with this. For the other two graphons however, state trajectories that are tending to $0$ due to reversion to the aggregate; the aggregate tends to zero in the non-constant graphon cases. We also note the player index has an impact on the state trajectory in the non-constant graphon cases. The state trajectories are clearly ordered in space according to index in the power-law graphon case. The index influences the state trajectory also in the min-max graphon case, albeit in a different way and to a smaller extent (almost indistinguishably in the figure but visible in a more detailed simulation; on average, the further from $0.5$ the index is, the steeper the initial descent).
\begin{figure}[h]
    \centerline{
    \includegraphics[width=\textwidth]{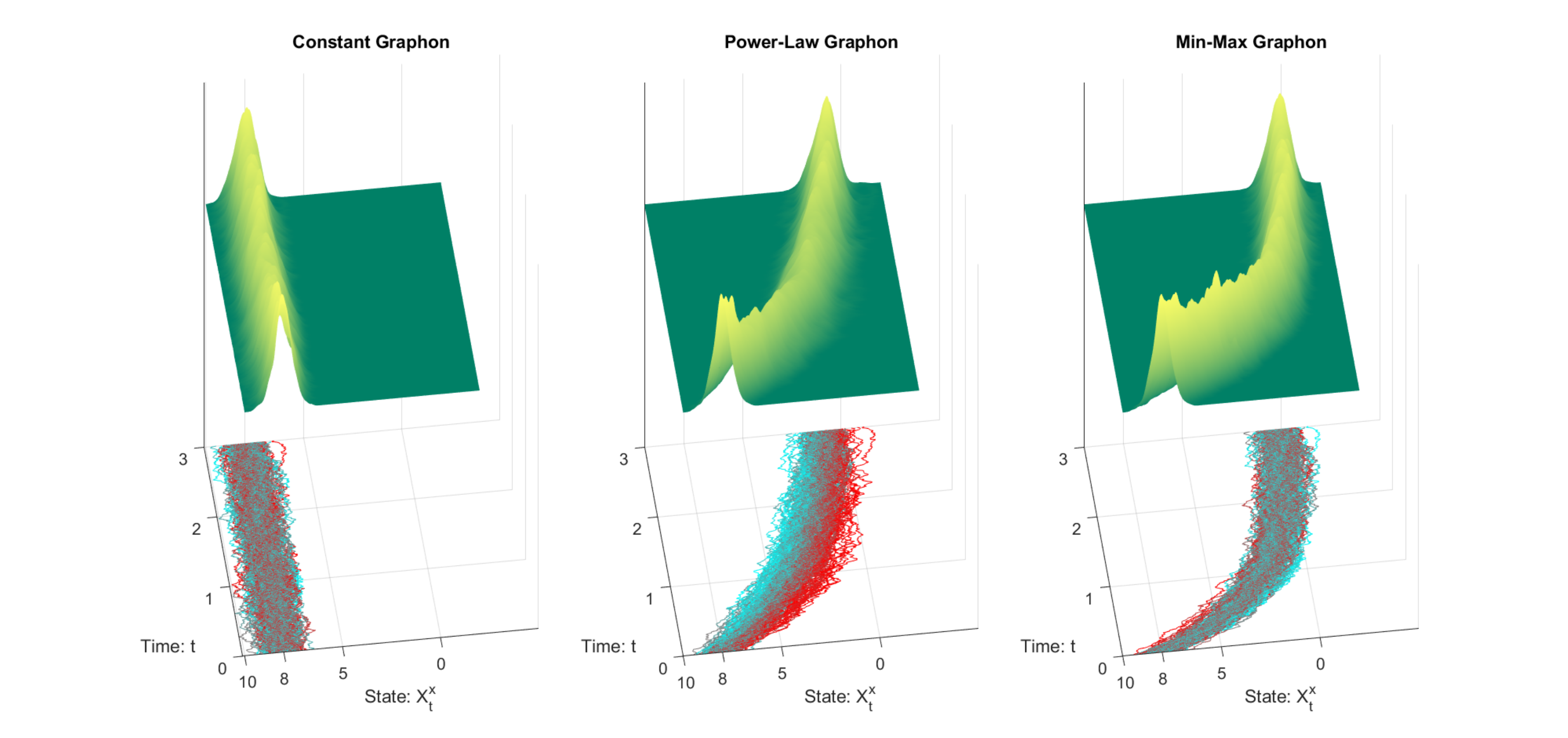}
    }
    \caption{Simulation of the graphon game at equilibrium, evaluating the influence of the graphon type on the solution. The same game is solved with three different aggregate variables, defined with the constant, the power-law, and the min-max graphon. 
\textbf{Top:} The green-yellow landscape visualizes the player population occupation measure over time. Only in the constant graphon case is this equal to the (empirical) distribution of the representative agent's state in the MFG.
\textbf{Bottom:} The blue-to-red graphs in the plane are simulated player state trajectories and the color indicates the corresponding player index. Interpolating between red and blue, higher indices are plotted in the blue while lower indices more in the red for the constant and power-law graphons. In the min-max graphon case, the more red the trajectory, the closer to $0.5$ is the corresponding index.}
    \label{fig:my_label}
\end{figure}

\section{Summary and Concluding Remarks}
\label{sec:conclusion}
In this paper, we formulate and analyze a stochastic differential game for a continuum of players subject to idiosyncratic random shocks modeled as a continuum of essentially pairwise independent Brownian motions, and interacting over a network structure given by a graphon. Using the framework of Fubini extensions and the Exact Law of Large Numbers, we demonstrate how to control the system. We characterize the Nash equilibria with the Pontryagin stochastic maximum principle and we prove a form of propagation of chaos providing an approximation of the graphon game by finite player network games (and vice versa).

We choose to present our theoretical results in the linear-quadratic setting in order to provide a complete analysis without too many technical assumptions.
The linear-quadratic case is the standard model in stochastic differential game theory, generally acknowledged as important and with a huge number applications,  somewhat restrictive though that may be. Some of our results hold true under weaker assumptions, let us briefly elaborate on the possible extensions of some of the results of the paper.

Theorem~\ref{thm:z-det-linear} generalizes beyond the linear model. With minor modifications to the proofs, the same conclusions hold when
\begin{equation*}
    dX^{\underline{\alpha},x}_t = \beta\big(x,X^{\underline{\alpha},x}_t ,\alpha^x_t, Z^{\underline{\alpha},x}_t\big)dt + dB^x_t,\quad X^x_0 = \xi^x,
\end{equation*}
and $Z^{\underline{\alpha},x}_t = [WK(X_t^{\underline{\alpha},\cdot})](x)$ under the assumption that $\beta$ and $K$ are $\mathcal{I}\otimes (\mathcal{B}(\mathbb{R}))^3$- and $\mathcal{B}(\mathbb{R})$-measurable, respectively; $\sup_{x\in I}|\beta(x,0,0,0)|<\infty$; and $(\chi,a,z) \mapsto \beta(x,\chi,a,z) + K(\chi)$ is Lipschitz continuous, uniformly in $x\in I$. With the additional assumption that $\chi \mapsto \beta(x,\chi, a,z)$ is differentiable, the maximum principle Proposition~\ref{prop:nec} can be lifted to the same level of generality. That said, a generalization of Theorem \ref{thm:fbde} would not be as straightforward. The induction approach would require an analysis of the master equation for the control problem, and prove that the gradient of its solution is uniformly Lipschitz continuous, which is beyond the scope of this study.

With regards to the convergence results, we expect the approach developed here to extend to other sequences interaction networks (than the one defined by evaluating the graphon on the $N = 1,2,\dots$ first elements of a fixed index sequence $x^\infty$) as long as they converge to a limiting graphon, \textit{e.g.}, in the cut-distance (see \cite{lovasz2012large}). 
Indeed, it is already known in some cases that various finite player game structures are approximated by the same MFG \cite{delarue2017mean}. It would follow that the probabilistic limits studied in Section~\ref{sec:convergence} extend, possibly with some slight modifications to the quantitative bounds, to sequences of games with random interaction graphs generated with the sampling method for graphons, as developed in \cite{lovasz2012large}.

\begin{appendix}
\section{Proofs}
\label{app:proof}

\subsection{Proof of Proposition~\ref{prop:fp-U}}

We drop the superscript $\underline\alpha$ since the strategy profile does not change throughout the proof.  By Lemma~\ref{lemma:well-posedness-of-X}, $\mathbb{X}^z\in L^2_\boxtimes(\Omega\times I; E)$ so by Lemma~\ref{lemma:extended-W}, the mapping $U$ is well defined. We turn to the contraction property.

To prove that $U$ is a strict contraction, let $z, \tilde{z} \in L^2_{\boxtimes}(\Omega\times I; E)$. By Gronwall's inequality and the boundedness of the graphon, 
\begin{equation}
\label{eq:contration_proof}
    \begin{aligned}
    \mathbb{E}^\boxtimes\big[\|\left[Uz\right] - [U\tilde{z}]\|^2_E\big]
    \leq
    C\int_0^T\mathbb{E}^\boxtimes\big[\sup_{s\in[0,t]}\big|z_s - \widetilde{z}_s\big|^2\Big]dt,
    \end{aligned}
\end{equation}
where $C>0$ is a finite constant depending only on $T$, $\|W\|_2$, and the coefficient bound. Iterating the inequality
\eqref{eq:contration_proof} and making use of the fact that
\begin{equation*}
    \mathbb{E}^{\boxtimes}\Big[\sup_{s\in[0,t]}|z_s|^2\Big] \leq
    \mathbb{E}^{\boxtimes}\big[\|z_s\|^2_E\big],\quad t\in [0,T],\ z\in L^2_{\boxtimes}(\Omega\times I; E),
\end{equation*}
we have
\begin{equation*}
    \mathbb{E}^{\boxtimes}\Big[\big\|[(U^N)z] - [(U^N)\widetilde{z}]\big\|_E^2\Big] \leq \frac{(CT)^N}{N!}\mathbb{E}^{\boxtimes}\Big[\|z - \tilde{z}\|_E^2\Big],\quad N\in\mathbb{N}.
\end{equation*}
Hence, for some $N\in\mathbb{N}$, $(U^N)$ is a contraction mapping on from  $L^2_{\boxtimes}(\Omega\times I; E)$ to itself. The existence of a unique fixed point to $U$ in the Banach space $L^2_{\boxtimes}(\Omega\times I; E)$ then follows by the Banach fixed-point theorem for iterated mappings, see \textit{e.g.} \citep{bryant1968remark}.

\subsection{Proof of Theorem~\ref{thm:fbde}}
\label{proof:thm-fbsde}

The coefficient matrices in \eqref{eq:fbsde_ex_game} are given in terms of $a,b,c,C_f$ and $C_h$ as follows:
\begin{align*}
    &\Gamma := 
    \begin{bmatrix}
    a - \frac{b[C_f]_{12}}{[C_f]_{22}} & -\frac{b^2}{[C_f]_{22}}
    \\
    \frac{[C_f]_{12}^2}{[C_f]_{22}}- [C_f]_{11} & \frac{b[C_f]_{12}}{[C_f]_{22}}- a
    \end{bmatrix},\quad
    \Gamma_Z := 
    \begin{bmatrix}
    c - \frac{b [C_f]_{32}}{[C_f]_{22}}
    \\
    \frac{[C_f]_{12}[C_f]_{32}}{[C_f]_{22}}-[C_f]_{12} 
    \end{bmatrix},\quad
    \Gamma_T := 
    \begin{bmatrix}
        [C_h]_{11}
        \\
        [C_h]_{12}
    \end{bmatrix}.
\end{align*}
\textit{Step 0: Uniqueness in the Fubini extension}\\
Assume that $(X, p, q)$ and $(\widetilde X, \widetilde p, \widetilde q)$ are solutions to the FBSDE \eqref{eq:fbsde_ex_game} in the sense that \eqref{eq:fbsde_ex_game} is satisfied $\mathbb{P}\boxtimes\lambda$-a.s. and $\mathbb{E}^\boxtimes\big[ \|X\|_E^2 + \|p\|_E^2 + \int_0^T|q_t|^2dt\big] < \infty$. 
Uniqueness, \textit{i.e.}, that 
\begin{equation*}
    \mathbb{E}^\boxtimes\Big[\|p - \widetilde p\|_E^2 
    + \|X - \widetilde X\|^2_E
    +
    \int_0^T |q_t - \widetilde q_t|^2dt
    \Big] = 0
\end{equation*}
can be proven along standard lines of proof.
\\~\\
\textit{Step 1: An ansatz for $p^x$}\\
We will look for a solution defined with the following ansatz: for each $x\in I$ there exists differentiable and deterministic mappings $t\mapsto \eta^x_t$ and $t\mapsto \zeta^x_t$ such that
\begin{equation}
\label{eqa:ansatz}
    p^x_t = \eta^x_t \hat{X}^x_t + \zeta^x_t.
\end{equation}
Plugging \eqref{eqa:ansatz} into~\eqref{eq:fbsde_ex_game} and matching terms
we obtain $\eta^x = q^x$ and the following system for $(\hat{X}^x,\zeta^x,\eta^x; x\in I)$:
\begin{equation}
\label{eq:the_first_eta-zeta}
    \begin{aligned}
    &\dot{\eta}^x_t = - (\eta^x_t)^2\Gamma_{12}(x) - \eta^x_t(\Gamma_{11}(x) + \Gamma_{21}(x) - \Gamma_{22}(x)) + \Gamma_{21}(x),& &\eta^x_T = \Gamma_{T,1}(x),
    \\
    &d\begin{bmatrix}
    \hat{X}_t^x
    \\
    \zeta^x_t
    \end{bmatrix}
    =
    \begin{bmatrix}
    \Gamma_{11}(x) + \eta^x_t\Gamma_{12}(x) & \Gamma_{12}(x)
    \\
    0 & \Gamma_{22}(x) - \eta^x_t\Gamma_{12}(x)
    \end{bmatrix}
    \begin{bmatrix}
    \hat{X}_t^x
    \\
    \zeta^x_t
    \end{bmatrix}
    dt
    \\
    &\hspace{2cm}+
    \begin{bmatrix}
    \Gamma_{Z,1}(x)
    \\
    \Gamma_{Z,2}(x) - \eta^x_t\Gamma_{Z,1}(x)
    \end{bmatrix}\hat{Z}^x_t
    dt
    +
    \begin{bmatrix}
    1
    \\
    0
    \end{bmatrix}
    dB^x_t,& &x\in I,\ t\in[0,T],
    \\
    &
    \hat{X}^x_0 = \xi^x,\quad
    \zeta^x_T =
    \Gamma_{T,2}(x)\hat{Z}^x_T,\quad 
    \hat{Z}^x_t = \int_I w(x,y)\mathbb{E}[\hat{X}^y_t]\lambda(dy),& &x\in I,\ t\in[0,T].
    \end{aligned}
\end{equation}
The Riccati equation for $\eta^x$ in \eqref{eq:the_first_eta-zeta} does not depend on the other variables and can be solved independently. Furthermore, under Assumption~\ref{assump:sec3.1} and \ref{cond:riccati} it has a unique solution $(\eta^x_t)_{t\in [0,T]}$ for all $x\in I$ 
and $\sup_{(t,x)\in[0,T]\times I}|\eta^x_t| < \infty$, see for example 
\citep[Sec. 2.4.1]{carmona2018probabilistic}. Thus, to prove existence of a solution to~\eqref{eq:the_first_eta-zeta} it is sufficient to study the forward-backward system for $(\hat{X}, \zeta)$, which is the subject matter of the next steps.
\\~\\
\textit{Step 2: Unique solvability of \eqref{eq:the_first_eta-zeta} for short time horizons}\\
If we fix a collection of aggregates $\hat{Z}^x\in E$, $x\in I$, then $\zeta^x$ and subsequently $\hat{X}^x$ can be solved explicitly for all $x\in I$. This "decoupling" property of the aggregate provides us with a simple proof of short time existence and uniqueness. By a fixed-point argument there exists a unique solution $(\hat{X}^x,\zeta^x)$ to \eqref{eq:the_first_eta-zeta} in $L^2_\boxtimes(\Omega\times I; E)\times L^2(I;E)$ when $T$ is small enough.
\\~\\
\textit{Step 3: Setting the stage for the induction approach}\\
Inspired by the induction approach, described in detail in \citep[Sec. 4.1.2.]{carmona2018probabilistic}, we now extend existence and uniqueness from the previous step to any finite time horizon.

For any $\tau\in [T_0,T]$, where $T_0 := T-c_0$ and $c_0>0$, let $\xi_\tau$ be such that $(\xi^x_\tau)_{x\in I}$ are e.p.i. and $\xi^x_\tau$ is $\mathcal{F}^x_\tau$-measurable for all $x\in I$. Assume that $c_0>0$ is small enough so that 
\begin{equation}
\label{eq:short-time-sys}
    \begin{aligned}
        &d\begin{bmatrix}
        \hat{X}_t^x
        \\
        \zeta^x_t
        \end{bmatrix}
        =
        \begin{bmatrix}
        \Gamma_{11}(x) + \eta^x_t\Gamma_{12}(x) & \Gamma_{12}(x)
        \\
        0 & \Gamma_{22}(x) - \eta^x_t\Gamma_{12}(x)
        \end{bmatrix}
        \begin{bmatrix}
        \hat{X}_t^x
        \\
        \zeta^x_t
        \end{bmatrix}
        dt
        \\
        &\hspace{2cm}+
        \begin{bmatrix}
        \Gamma_{Z,1}(x)
        \\
        \Gamma_{Z,2}(x) - \eta^x_t\Gamma_{Z,1}(x)  
        \end{bmatrix}\hat{Z}^x_t
        dt
        +
        \begin{bmatrix}
        1
        \\
        0
        \end{bmatrix}
        dB^x_t,& &x\in I,\ t\in [\tau, T],
        \\
        &
        \hat{X}^x_\tau = \xi^x_\tau,\quad
        \zeta^x_T =
        \Gamma_{T,2}(x)\hat{Z}^x_T, \quad \hat{Z}^x_t = \int_I w(x,y)\mathbb{E}[\hat{X}^y_t]\lambda(dy),& &x\in I,\ t\in[\tau,T].
    \end{aligned}
\end{equation}
has a unique solution as found in Step 2. Denote the solution $(\hat{X}^{0:x,\tau,\xi_\tau^\cdot}_t,\zeta^{0:x,\tau,\xi_\tau^\cdot}_t; t\in[\tau,T])$. 

Assume now that the forward-backward system \eqref{eq:the_first_eta-zeta} has a solution over the full time horizon: $(\hat{X}^x_t,\zeta^x_t; t\in[0,T])$. It is also a solution to \eqref{eq:short-time-sys} on the subinterval $[T_0,T]$ with $\hat{X}_{T_0}^x$ as initial condition at $T_0 = T-c_0$. By the unique solvability of \eqref{eq:short-time-sys},
$$
\mathbb{P}\boxtimes\lambda\Big((\omega,x) : (\hat{X}^x_t(\omega),\zeta^x_t) = (\hat{X}^{0:x,T_0,\hat{X}^\cdot_{T_0}}_t(\omega), \zeta^{0:x,T_0,\hat{X}^\cdot_{T_0}}_t),\ T_0 \leq t\leq T \Big) = 1.
$$ 
Now consider for some $\tau \in [0,T_0]$ the forward-backward system
\begin{equation}
\label{eq:short-time-sys2}
\begin{aligned}
        &d\begin{bmatrix}
        \hat{X}_t^x
        \\
        \zeta^x_t
        \end{bmatrix}
        =
        \begin{bmatrix}
        \Gamma_{11}(x) + \eta^x_t\Gamma_{12}(x) & \Gamma_{12}(x)
        \\
        0 & \Gamma_{22}(x) - \eta^x_t\Gamma_{12}(x)
        \end{bmatrix}
        \begin{bmatrix}
        \hat{X}_t^x
        \\
        \zeta^x_t
        \end{bmatrix}
        dt
        \\
        &\hspace{2cm}+
        \begin{bmatrix}
        \Gamma_{Z,1}(x)
        \\
        \Gamma_{Z,2}(x) - \eta^x_t\Gamma_{Z,1}(x)
        \end{bmatrix}\hat{Z}^x_t
        dt
        +
        \begin{bmatrix}
        1
        \\
        0
        \end{bmatrix}
        dB^x_t, & & x\in I,\ t\in [\tau, T_0],
        \\
        &
        \hat{X}^x_{\tau} = \xi^x_{\tau},\quad
        \zeta^x_{T_0} = \zeta^{0:x,T_0,\hat{X}^\cdot_{T_0}}_{T_0}, \quad  \hat{Z}^x_t = \int_I w(x,y)\mathbb{E}[\hat{X}^y_t]\lambda(dy),& &x\in I,\ t\in[\tau,T_0].
    \end{aligned}
\end{equation}
with $\xi^x_\tau$ satisfying the same assumptions as above but with the new $\tau \in [0, T_0]$ replacing the old $\tau \in [T_0,T]$. System \eqref{eq:short-time-sys2} differs from \eqref{eq:the_first_eta-zeta} in the terminal condition for the backward equation. In the next steps we deduce small-time existence and uniqueness of solutions of \eqref{eq:short-time-sys2}, we patch the solution with $(\hat{X}^{0:x,T_0,\hat{X}^\cdot_{T_0}}_t, \zeta^{0:x,T_0,\hat{X}^\cdot_{T_0}}_t; t\in[T_0,T])$, then we repeat and show that after a finite number of patching rounds we are left with the unique solution to \eqref{eq:the_first_eta-zeta}.
\\~\\
\textit{Step 4: Unique solvability of \eqref{eq:short-time-sys2} for short time horizons}\\
Let $\tau \in [0, T_0]$ and $E_{[\tau,T_0]} := C([\tau,T_0])$. Let $V_{[\tau,T_0]}$ and $V_{\boxtimes,[\tau,T_0]}$ denote $L^2(I; E_{[\tau,T_0]})$ and $L^2_\boxtimes(\Omega\times I; E_{[\tau,T_0]})$, respectively.
A fixed-point argument can be made to prove the existence of a $c_1>0$ such that if $T_0-\tau \leq c_1$, then there is a unique solution to \eqref{eq:short-time-sys2} in $V_{\boxtimes,T_0}\times V_{T_0}$. Denote the solution by $(\hat{X}^{1:x,\tau,\xi^\cdot_{\tau}}_t, \zeta^{1:x,\tau,\xi^\cdot_{\tau}}_t; t\in [\tau,T_0])$.

Let $T_1 := T_0 - c_1$. Most importantly (since we aim to use the induction approach) $c_1$ can take any value smaller than a constant $\bar C$ depending only on the time horizon $T$ and the coefficient function bound (from Assumption~\ref{assump:sec3.1}). The fixed-point calculations are tedious and omitted here. The main difficulty comes from that the terminal condition of the backward part of the equation depends on the solution of \eqref{eq:short-time-sys} initiated at the solution of \eqref{eq:short-time-sys2}. This can however be overcome by using the fixed-point argument for \eqref{eq:short-time-sys} from Step 2.
\\~\\
\textit{Step 5: Patching the solutions over $[T_1, T_0]$ and $[T_0, T]$}
\\
We now patch the two solutions. Let $\tau\in [T_1,T_0]$ and $\xi_\tau$ be the initial condition vector in \eqref{eq:short-time-sys2}. For any $s\in [\tau, T]$,
\begin{equation}
    (\hat{X}^x_s, \zeta^x_s) = 
    \begin{cases}
    (\hat{X}^{1:x,\tau,\xi^\cdot_{\tau}}_s, \zeta^{1:x,\tau,\xi^\cdot_{\tau}}_s), & s\in [\tau, T_0],
    \\
    (\hat{X}^{0:x,T_0,\hat{X}^{1:\cdot,\tau,\xi_\tau}_{T_0}}_s, \zeta^{0:x,T_0,\hat{X}^{1:\cdot,\tau,\xi_\tau}_{T_0}}_s), & s\in (T_0, T].
    \end{cases}
\end{equation}
Then, $\mathbb{P}\boxtimes\lambda$-a.s.,
\begin{align*}
    &\lim_{s\downarrow T_0} \hat{X}^x_s(\omega) = \lim_{s\downarrow T_0} \hat{X}^{0:x,T_0,\hat{X}^{1:\cdot,\tau,\xi_\tau}_{T_0}}_s(\omega) = \hat{X}^{1:x,\tau,\xi^\cdot_\tau}_{T_0}(\omega) = \hat{X}^x_{T_0}(\omega),
    \\
    &\lim_{s\downarrow T_0} \zeta^x_s = \lim_{s\downarrow T_0} \zeta^{0:x,T_0,\hat{X}^{1:\cdot,\tau,\xi_\tau}_{T_0}}_s = \zeta^{1:x,\tau,\xi^\cdot_\tau}_{T_0} = \zeta^x_{T_0},
\end{align*}
so $(\hat{X}^x_s, \zeta^x_s; \tau\leq s\leq T)$ is $\mathbb{P}\boxtimes\lambda$-a.s. continuous and the unique solution to the forward-backward system
\begin{equation*}
    \begin{aligned}
        &d\begin{bmatrix}
        \hat{X}_t^x
        \\
        \zeta^x_t
        \end{bmatrix}
        =
        \begin{bmatrix}
        \Gamma_{11}(x) + \eta^x_t\Gamma_{12}(x) & \Gamma_{12}(x)
        \\
        0 & \Gamma_{22}(x) - \eta^x_t\Gamma_{12}(x)
        \end{bmatrix}
        \begin{bmatrix}
        \hat{X}_t^x
        \\
        \zeta^x_t
        \end{bmatrix}
        dt
        \\
        &\hspace{2cm}+
        \begin{bmatrix}
        \Gamma_{Z,1}(x)
        \\
        \Gamma_{Z,2}(x) - \eta^x_t\Gamma_{Z,1}(x)
        \end{bmatrix}\hat{Z}^x_t
        dt
        +
        \begin{bmatrix}
        1
        \\
        0
        \end{bmatrix}
        dB^x_t, & & x\in I,\ t\in [\tau, T],
        \\
        &
        \hat{X}^x_{\tau} = \xi^x_{\tau},\quad
        \zeta^x_{T_0} = \zeta^{0:x,T_0,\hat{X}^\cdot_{T_0}}_{T_0}, \quad  \hat{Z}^x_t = \int_I w(x,y)\mathbb{E}[\hat{X}^y_t]\lambda(dy),& &x\in I,\ t\in[\tau,T].
    \end{aligned}
\end{equation*}
\textit{Step 6: The induction approach}\\
Consider the following forward-backward system: for some $\tau \in [0, T_1]$,
\begin{equation}
\label{eq:short-time-sys3}
    \begin{aligned}
        &d\begin{bmatrix}
        \hat{X}_t^x
        \\
        \zeta^x_t
        \end{bmatrix}
        =
        \begin{bmatrix}
        \Gamma_{11}(x) + \eta^x_t\Gamma_{12}(x) & \Gamma_{12}(x)
        \\
        0 & \Gamma_{22}(x) - \eta^x_t\Gamma_{12}(x)
        \end{bmatrix}
        \begin{bmatrix}
        \hat{X}_t^x
        \\
        \zeta^x_t
        \end{bmatrix}
        dt
        \\
        &\hspace{2cm}+
        \begin{bmatrix}
        \Gamma_{Z,1}(x)
        \\
        \Gamma_{Z,2}(x) - \eta^x_t\Gamma_{Z,1}(x) 
        \end{bmatrix}\hat{Z}^x_t
        dt
        +
        \begin{bmatrix}
        1
        \\
        0
        \end{bmatrix}
        dB^x_t, & & x\in I,\ t\in [\tau, T],
        \\
        &
        \hat{X}^x_{\tau} = \xi^x_{\tau},\quad
        \zeta^x_{T_1} =\zeta^{1:x,T_1,\hat{X}^\cdot_{T_1}}_{T_1}, \quad  \hat{Z}^x_t = \int_I w(x,y)\mathbb{E}[\hat{X}^y_t]\lambda(dy),& &x\in I,\ t\in[\tau,T].
    \end{aligned}
\end{equation}
Repeating the analysis that was done for the interval $[T_1,T_0]$ proves unique solvability for \eqref{eq:short-time-sys3} whenever $T_1 - \tau < c_1$, with $c_1$ being the same constant that was found above in Step 4. Hence, we have a unique solution to \eqref{eq:short-time-sys3} on the time interval $[T-(2c_1+c_0), T-(c_1+c_0)] =: [T_2,T_1]$. This solution can be patched with the solution on $[T_1,T]$ as was done in Step 5. After a finite number $N$ of iterations (an explicit lower bound on $N$ can be found, depending only on $T$ and $\bar C$, the constant from Step 4), the whole interval $[0,T]$ has been covered and the patching has yielded a unique solution to \eqref{eq:the_first_eta-zeta} over the interval $[0,T]$ for any finite $T>0$. 

\subsection{Proof of Proposition~\ref{prop:poc-conv-rate}}

The proof relies heavily on a bound for the following random variable: for a fixed $x^\infty\in I^\infty$ and $N\in \mathbb{N}$, we define 
\begin{equation*}
    \zeta^{x^\infty}_N : [0,T]\times I \ni (t,x) \mapsto \frac{1}{N}\sum_{h=1}^N  w(x,x_h)X^{x_h}_t - \int_I w(x,y)\mathbb{E}[X^y_t]\lambda(dy).
\end{equation*}
We know $\zeta^{x^\infty}_N$ is well-defined for all $x^\infty\in I^\infty$ since the graphon game state $X^x$ is defined for all $x\in I$.

\begin{lemma}
\label{lem:poc-zeta}
For all $x^\infty\in I^\infty$ and $N\in \mathbb{N}$ there exists a constant $C$, independent of $x^\infty$ and $N$, such that
\begin{align*}
    \max_{1\leq k\leq N}
    \Big(\mathbb{E}\Big[\sup_{t\in[0,T]}\big(|X^{k,N}_t - X^{x_k}_t|^2 \hspace{-3pt}
    + 
    \hspace{-3pt}
    |p^{k k,N}_t - p^{x_k}_t|^2\big) \Big]
    + \sup_{t\in[0,T]}\mathbb{E}\Big[|Z^{x_k,N}_t - Z^{x_k}_t|^2\Big]\Big)&
    \\
    \leq
        \sup_{(t,x)\in [0,T]\times I}C\Big(\mathbb{E}\Big[|\zeta^{x^\infty}_N(t,x)|^2\Big]
        + \frac{1}{N}\Big).&
\end{align*}
\end{lemma}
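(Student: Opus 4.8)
The plan is to derive the bound by comparing, player by player, the $N$-player forward-backward system with the graphon game system evaluated along the fixed sequence $x^\infty$. Writing $\delta X^k := X^{k,N} - X^{x_k}$ and $\delta p^k := p^{kk,N} - p^{x_k}$ and subtracting the two diagonal FBSDEs, the idiosyncratic noise $dB^{x_k}$ cancels in the forward part and $(\delta X^k, \delta p^k)$ solves a linear FBSDE with the same coefficient matrices $\Gamma(x_k), \Gamma_Z(x_k), \Gamma_T(x_k)$ as in \eqref{eq:fbsde_ex_game}, forced by two perturbations: the aggregate discrepancy $Z^{k,N}_t - Z^{x_k}_t$, entering the drifts and terminal condition through $\Gamma_Z(x_k)$ and $\Gamma_T(x_k)$, and the off-diagonal coupling $R^k_t := -\frac{1}{N}\sum_{\ell=1}^N c(x_\ell)w(x_k,x_\ell)p^{k\ell,N}_t$ entering the backward drift. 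The key algebraic step is to split the aggregate discrepancy as
\begin{equation*}
    Z^{k,N}_t - Z^{x_k}_t = \frac{1}{N}\sum_{h=1}^N w(x_k,x_h)\,\delta X^h_t + \zeta^{x^\infty}_N(t,x_k),
\end{equation*}
isolating a mean-field feedback term, linear in the state differences, from the exact-law-of-large-numbers fluctuation $\zeta^{x^\infty}_N(t,x_k)$, which is precisely the quantity appearing on the right-hand side of the claimed bound.

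Before closing the estimate I would dispose of the off-diagonal costates. Inspecting their backward system, every source term and the terminal condition carry the prefactor $w(x_k,x_h)/N$, and the only $O(1)$ ingredient of the coupling $\frac{1}{N}\sum_\ell c(x_\ell)w(x_h,x_\ell)p^{k\ell,N}$ is the diagonal $\ell = k$ contribution, which is itself multiplied by $1/N$. A standard a priori estimate for this coupled linear BSDE, together with the uniform bounds on $(X^{k,N},p^{kk,N})$, then gives $\sum_{h\neq k}\mathbb{E}[\sup_t|p^{kh,N}_t|^2]\le C/N$ uniformly in $N$ and in $x^\infty$ (the constant depending only on $T$ and the coefficient bounds). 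Separating the $\ell=k$ term and applying Cauchy--Schwarz to the remaining sum yields $\mathbb{E}[|R^k_t|^2]\le C/N$, so the off-diagonal coupling contributes exactly at the order $1/N$ recorded in the statement.

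It remains to close the estimate for $A_N := \max_k\mathbb{E}[\sup_t(|\delta X^k_t|^2+|\delta p^k_t|^2)]$ and the aggregate term, in terms of $\Phi_N := \sup_{(t,x)}\mathbb{E}[|\zeta^{x^\infty}_N(t,x)|^2]$ and $1/N$. Here I would use the affine decoupling $p^x = \eta^x X^x + \zeta^x$ from the proof of Theorem~\ref{thm:fbde}: the Riccati coefficient $\eta^{x_k}$ is shared by both systems and is uniformly bounded under Assumption~\ref{cond:riccati}, so writing $\delta p^k = \eta^{x_k}\delta X^k + \delta\zeta^k$ turns the forward-backward difference into a forward equation for $\delta X^k$ (with vanishing martingale part, the shocks being common to both systems) coupled to a backward equation for the offset $\delta\zeta^k$, both forced only by the aggregate discrepancy and by $R^k$. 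The essential structural point is that the feedback operator $\frac1N\sum_h w(x_k,x_h)(\cdot)$ has row sums bounded by $1$, hence acts as a contraction in the empirical average $\frac1N\sum_h|\delta X^h_t|^2$; controlling this average first neutralizes the feedback, and Gronwall's inequality in time applied to the single-player equations then upgrades the average bound to the maximum over $k$. This produces $A_N\le C(\Phi_N + 1/N)$, and the bound on $\sup_t\mathbb{E}[|Z^{k,N}_t - Z^{x_k}_t|^2]$ follows by feeding this back into the split above.

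The hard part is precisely this closing. Because the mean-field feedback $\frac1N\sum_h w(x_k,x_h)\delta X^h_t$ is instantaneous rather than retarded, a direct application of the global linear-FBSDE stability estimate only produces the self-referential inequality $A_N\le C(A_N + \Phi_N + 1/N)$, which fails to close unless $C<1$. The resolution — reducing the backward component via the uniformly bounded decoupling field, treating the feedback through the $\tfrac1N$-averaged norm in which it is a contraction, and then running Gronwall in time — is where the boundedness of the graphon (operator norm at most $1$) and the uniform solvability granted by Assumption~\ref{cond:riccati} are indispensable; should one wish to avoid solving the matrix Riccati equation governing the coupled offset $\delta\zeta^k$, the same short-time-plus-patching mechanism used in Theorem~\ref{thm:fbde} provides an equivalent route, since the per-step constant is controlled uniformly in $N$ and $x^\infty$.
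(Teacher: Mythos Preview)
Your proposal is correct and shares the paper's overall architecture---subtract the two FBSDE systems along $x^\infty$, isolate the off-diagonal costate contribution, and close---but the two implementations differ in emphasis. For the off-diagonals, the paper introduces an auxiliary BSDE (your system with the $w(x_k,x_h)/N$ source terms and terminal data removed) whose unique solution is identically zero, and reads off $\max_{h\neq k}\mathbb{E}[\sup_t|p^{kh,N}_t|^2]\le C/N$ from the stability estimate for the difference; this sidesteps the need to track the $\ell^2$-sum over $h$ that you use. For the main bound, the paper simply invokes ``standard estimates for linear SDEs and BSDEs'' to obtain
\[
\Delta(x^\infty,N)\le C\Big(\sup_{t,x}\mathbb{E}[|\zeta^{x^\infty}_N(t,x)|^2]+\max_{\ell\neq h}\mathbb{E}\big[\sup_t|p^{\ell h,N}_t|^2\big]+\tfrac1N\Big)
\]
without spelling out how the instantaneous feedback $\tfrac1N\sum_h w(x_k,x_h)\delta X^h$ is absorbed. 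Your route---pass to the decoupling field $\eta^{x_k}$, control the empirical average first (where the graphon row-sum bound makes the feedback harmless under Gronwall), then upgrade to the maximum---makes this step explicit, and correctly identifies that a naive application of global FBSDE stability would only give a self-referential bound. In short: same skeleton, the paper's off-diagonal argument is slicker, your closing argument is more detailed and fills a gap the paper leaves to the reader.
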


\begin{proof}
From standard estimates for linear SDEs and BSDEs we get
\begin{align*}
    &\mathbb{E}\Big[\sup_{t\in[0,T]}|X^{k,N}_t-X^{i_k}_t|^2 + \sup_{t\in[0,T]}|p^{k k,N}_t-p^{i_k}_t|^2
    \Big] + \sup_{t\in[0,T]}
    \mathbb{E}\left[|Z^{k,N}_t - Z^{i_k}_t|^2\right]
    \\
    &\leq
    C\Big(\sup_{(t,x)\in [0,T]\times I}\mathbb{E}\left[|\zeta^{x^\infty}_N(t,x)|^2\right]
    + \max_{1\leq \ell,h \leq N: \ell\neq h}
    \mathbb{E}\Big[\sup_{t\in[0,T]}|p^{\ell h,N}_t|^2
    \Big] + \frac{1}{N}\Big).
\end{align*}
Next, we will the estimate for the right hand side term containing off-diagonal adjoint state variables. Consider the following auxiliary BSDE system: for $k=1,\dots, N$, $p^{kk,N}_t = 0$ and $h = 1,\dots, N$, $h\neq k$,
\begin{equation}
\label{eq:aux-bsde}
        \widetilde p^{k h,N}_t = \int_t^T \Big(a(x_k)\widetilde p^{k h, N}_s + \frac{1}{N}\sum_{\ell=1}^N c(x_\ell)w(x_h,x_\ell)\widetilde p^{k \ell, N}_s\Big)ds - \sum_{\ell=1}^N\int_t^T\widetilde q^{k h \ell, N}_s dB^{x_\ell}_s.
\end{equation}
The difference $p^{kh,N}_t - \widetilde p^{kh,N}_t$, $1\leq h\neq k \leq N$, satisfies the BSDE
\begin{align*}
        &p^{k h, N}_t - \widetilde p^{k h, N}_t
        =
        \frac{w(x_k,x_h)}{N}
        \bigg(
        \bar\Gamma^*_T(x_k)\begin{bmatrix}
        X^{k,N}_T
        \\
        Z^{k, N}_T
        \end{bmatrix}
        +
        \int_t^T \bar\Gamma(x_k)\begin{bmatrix}
        X^{k,N}_s
        \\
        p^{kk,N}_s
        \end{bmatrix} 
        + \bar\Gamma_Z(x_k)Z^{k,N}_s ds \bigg)
        \\
        &\quad+ \int_t^T\Big(
        \frac{1}{N}\sum_{\ell=1}^N c(x_\ell)w(x_h,x_\ell)(p^{k\ell,N}_s-\widetilde p ^{k\ell,N}_s) + a(x_h) (p^{kh,N}_s-\widetilde p^{kh,N}_s) \Big)ds 
        \\
        &\quad 
        - \sum_{\ell=1}^N\int_t^T (q^{kh\ell,N}_s - \widetilde q^{kh\ell,N}_s)dB^{i_\ell}_s,\qquad t\in [0,T].
\end{align*}
Standard BSDE estimates (relying on the integrability of $X^{k,N}, Z^{k,N}$, and $p^{kk,N}$) yield
\begin{equation*}
    \mathbb{E}\Big[\sup_{t\in[0,T]}|p^{k h, N}_t - \widetilde p^{k h, N}_t|^2 + \int_0^T|q^{k h \ell, N}_s - \widetilde q^{k h \ell, N}_s|^2ds\Big] \leq \frac{C}{N},
\end{equation*}
with $C$ independent of $k,h,$ and $N$. The unique solution to \eqref{eq:aux-bsde} is $\widetilde p^{k h, N}_t = \widetilde q^{k h \ell}_t = 0$ for $t\in [0,T]$, $\ell = 1,\dots, N$, $1\leq k\neq h\leq N$. 
\end{proof}

Before attending to the first claim of the proposition, we establish a useful estimate. It follows from the Burkholder-Davis-Gundy inequality, Gronwall's lemma, and the uniform integrability of the initial conditions that
\begin{equation}
\label{lemma:uniform-bound-over-I-of-X}
    \sup_{x\in I} \mathbb{E}[\|X^x\|_E^2] \leq C(T,w),
\end{equation}
for some finite $C(T,w) > 0$ that depends only on $T$ and $w$. Adding and subtracting $\frac{1}{N}\sum_{k=1}^N w(x,x_k)\mathbb{E}[X^{x_k}_t]$ to $\zeta^{x^\infty}_N(t,x)$ results in
\begin{equation}
\label{eq-ineq}
    \begin{aligned}
        \mathbb{E}\left[|\zeta^{x^\infty}_N(t,x)|^2\right]
        &\leq
       C
    \mathbb{E}\Big[\Big|\frac{1}{N}\sum_{k=1}^N w(x,x_k)\left(X^{x_k}_t - \mathbb{E}[X^{x_k}_t]\right)\Big|^2\Big]
    \\
    &\qquad 
    +
    C
    \Big|\frac{1}{N}\sum_{k=1}^N w(x,x_k)\mathbb{E}[X^{x_k}_t] - 
    \int_I w(x,y)\mathbb{E}[X^y_t]\lambda(dy)\Big|^2.
    \end{aligned}
\end{equation}
By Lemma~\ref{lemma:extended-W},
$(w(x, x_k)(X^{x_k}_t - \mathbb{E}[X^{x_k}_t]))_{k=1}^\infty$ are mutually independent $\bar\lambda^\infty$-a.s. Thus $\bar\lambda^\infty\text{-a.s.}$
\begin{equation}
\label{eq:proof-prop-3-1}
    \mathbb{E}\Big[\Big|\frac{1}{N}\sum_{k=1}^N w(x,x_k)(X^{x_k}_t - \mathbb{E}[X^{x_k}_t])\Big|^2\Big]
    = \frac{1}{N^2}\sum_{k=1}^N
    \mathbb{E}\left[\left|w(x,x_k)(X^{x_k}_t - \mathbb{E}[X^{x_k}_t])\right|^2\right].
\end{equation}
The summands on the right hand side of \eqref{eq:proof-prop-3-1} can be bounded by a constant independent of $(t,x)$, using the estimate derived above. We get
\begin{equation*}
    \sup_{(t,x)\in[0,T]\times I}
    \mathbb{E}\Big[\Big|\frac{1}{N}\sum_{k=1}^N w(x,x_k)(X^{x_k}_t - \mathbb{E}[X^{x_k}_t])\Big|^2\Big]
    \leq \frac{C(T,w)}{N},\quad \bar\lambda^\infty\text{-a.s.}
\end{equation*}
for some constant $C(T,w)>0$ depending only on $T$ and $w$. We move on to the second term of the right hand side of \eqref{eq-ineq}. By the strong law of large numbers, it tends to zero almost surely as $N\rightarrow\infty$, proving the first claim of the proposition. To prove the second claim, we prove tightness of the term and then apply the Law of Iterated Logarithms.

Let $m(t,x) := \int_I w(x,y)\mathbb{E}[X^y_t]\lambda(dy)$
be the mean of $w(x,x_k)\mathbb{E}[X^{x_k}_t]$ when $x_k$ is $\lambda$-distributed. Let furthermore $\theta_k(x^\infty) : (t,x) \mapsto w(x,x_k)\mathbb{E}[X^{x_k}_t]-m(t,x)$. $\theta_k$ is a random variable on $(I^\infty,\mathcal{I}^\infty, \lambda^\infty)$ into $C([0,T]\times I)$.

\begin{lemma}
The collection $(\Theta_N)_{N\in \mathbb{N}}$, where $\Theta_N := \frac{1}{\sqrt{N}}\sum_{k=1}^N\theta_k$, is tight.
\end{lemma}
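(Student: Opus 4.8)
The plan is to prove tightness of $(\Theta_N)_N$ in $C([0,T]\times I)$ by verifying a multiparameter Kolmogorov--Chentsov tightness criterion on the two-dimensional compact set $[0,T]\times I$: it suffices to produce (a) a uniform-in-$N$ bound on the moments of $\Theta_N$ at a single fixed point, and (b) a uniform-in-$N$ bound of the form $\mathbb{E}[|\Theta_N(u)-\Theta_N(v)|^{\gamma}]\le C\,\rho(u,v)^{2+\beta}$ for some $\gamma,\beta>0$, where $\rho$ is the Euclidean metric. The structural fact that makes this tractable is that, under $\lambda^\infty$, the coordinates $x_k$ are i.i.d.\ $\lambda$-distributed, so the summands $\theta_k=g(x_k)$, with $g(y):(t,x)\mapsto w(x,y)\mathbb{E}[X^y_t]-m(t,x)$, are i.i.d.\ $C([0,T]\times I)$-valued random variables; moreover they are centered in the pointwise sense, $\mathbb{E}[\theta_k(t,x)]=\int_I w(x,y)\mathbb{E}[X^y_t]\lambda(dy)-m(t,x)=0$ for every $(t,x)$, which is all we need since the moment inequalities below are applied to the real-valued random variables $\Theta_N(u)-\Theta_N(v)$ at fixed points.

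First I would record a \emph{deterministic} modulus of continuity for a single summand. Using the uniform estimate $\sup_{x\in I}\mathbb{E}[\|X^x\|_E^2]\le C(T,w)$ from \eqref{lemma:uniform-bound-over-I-of-X} (together with its analogue for $p^x$), the equilibrium mean $t\mapsto\mathbb{E}[X^x_t]$ satisfies a linear ODE with bounded coefficients and bounded inhomogeneity, obtained by taking expectations in the forward equation of \eqref{eq:fbsde_ex_game} (the $dB^x$ term drops out); hence it is Lipschitz in $t$, uniformly in $x\in I$. Combining this with Assumption~\ref{cond:poc-and-cont}, which renders $x\mapsto w(x,y)$ uniformly $1/2$-Hölder, and with the uniform boundedness of $\mathbb{E}[X^{x}_t]$, both $w(\cdot,x_1)\mathbb{E}[X^{x_1}_\cdot]$ and $m$ are Lipschitz in $t$ and $1/2$-Hölder in $x$, so that uniformly over $x^\infty$,
$$
|\theta_1(t,x)-\theta_1(t',x')|\le C\big(|t-t'|+|x-x'|^{1/2}\big)=:C\,\delta\big((t,x),(t',x')\big).
$$

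Next I would pass from the single summand to the normalized sum using independence and centering. For fixed $u,v\in[0,T]\times I$ the increments $\theta_k(u)-\theta_k(v)$ are i.i.d., centered, and deterministically bounded by $C\delta(u,v)$. A Rosenthal (Marcinkiewicz--Zygmund) inequality then gives, for any even integer $2p\ge 2$ and uniformly in $N$,
$$
\mathbb{E}\big[|\Theta_N(u)-\Theta_N(v)|^{2p}\big]
\le C_p\Big(\big(\mathbb{E}[|\theta_1(u)-\theta_1(v)|^2]\big)^{p}+\mathbb{E}[|\theta_1(u)-\theta_1(v)|^{2p}]\Big)
\le C_p\,\delta(u,v)^{2p},
$$
the decisive point being that the $N$-dependent prefactor $N^{1-p}$ in front of the $2p$-th moment term is bounded by $1$. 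Applying the same inequality at a single point yields $\sup_N\mathbb{E}[|\Theta_N(u)|^{2p}]\le C_p$, which gives (a).

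The last step, and the main obstacle, is that the modulus $\delta$ is anisotropic---Lipschitz in time but only $1/2$-Hölder in space---so the moment exponent must be taken large enough that $\delta^{2p}$ dominates the Euclidean metric to a power strictly above the dimension $2$ in \emph{both} directions. Since $\delta(u,v)^{2p}\le C\big(|t-t'|^{2p}+|x-x'|^{p}\big)$, controlling this by $C\rho(u,v)^{2+\beta}$ requires $2p\ge 2+\beta$ (time) and $p\ge 2+\beta$ (space), so the space direction forces $p\ge 3$. Taking $p=3$ (i.e.\ $\gamma=6$, $\beta=1$) gives $\mathbb{E}[|\Theta_N(u)-\Theta_N(v)|^{6}]\le C\,\rho(u,v)^{3}$ uniformly in $N$, and the Kolmogorov--Chentsov criterion then delivers uniform-in-$N$ equicontinuity in probability; together with the fixed-point bound and Prokhorov's theorem this yields tightness of $(\Theta_N)_N$ in $C([0,T]\times I)$. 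Beyond the bookkeeping of the two Hölder exponents, the only genuinely model-specific input is the uniform-in-$x$ time-regularity of the equilibrium mean, which is a routine consequence of the bounds established earlier.
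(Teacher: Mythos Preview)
Your proof is correct and shares the paper's overall strategy: verify the multiparameter Kolmogorov--Chentsov criterion on $[0,T]\times I$ via a uniform-in-$N$ moment bound on the increments of $\Theta_N$, together with tightness at a single point. The paper expands the fourth moment of $\Theta_N(u)-\Theta_N(v)$ by hand, using independence and centering of the $\theta_k$ to discard all but the diagonal and two-pair contributions, and then bounds each $(\theta_k(u)-\theta_k(v))^2$ through the regularity of $w$ and of $t\mapsto\mathbb{E}[X^{x_k}_t]$. Your route instead first records a \emph{deterministic}, $x_k$-uniform H\"older bound $|\theta_k(u)-\theta_k(v)|\le C(|t-t'|+|x-x'|^{1/2})$ and then invokes Rosenthal's inequality, which delivers the increment bound for any even exponent $2p$. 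This flexibility is not cosmetic: taking $p=3$ gives $\mathbb{E}|\Theta_N(u)-\Theta_N(v)|^{6}\le C(|t-t'|^{6}+|x-x'|^{3})$, safely above the dimension $d=2$, whereas under the $1/2$-H\"older hypothesis one only has $|w(x,x_k)-w(x',x_k)|^{2}\le C|x-x'|$, so a fourth-moment computation produces at best $|x-x'|^{2}$ in the space variable---the borderline exponent for the criterion. Your treatment of the anisotropic regularity is therefore the more careful one. The single model-specific input you use beyond Assumption~\ref{cond:poc-and-cont}, the uniform-in-$x$ Lipschitz continuity of $t\mapsto\mathbb{E}[X^{x}_t]$, does follow from taking expectations in the forward equation of \eqref{eq:fbsde_ex_game} and the uniform bounds on the state and costate recorded around \eqref{lemma:uniform-bound-over-I-of-X}.
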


\begin{proof}
We will apply the tightness criterion provided by~\cite[Corollary 16.9]{MR1876169}, which stems from the Kolmogorov-Chentsov criterion. We first note that
$$
  \Theta_N(0,0) 
  = \frac{1}{\sqrt{N}}\sum_{k=1}^N\theta_k
  = \frac{1}{\sqrt{N}}\sum_{k=1}^N\left[ w(0,x_k)\mathbb{E}[X^{x_k}_0] - \int_I w(0,y)\mathbb{E}[X^y_0]\lambda(dy) \right],
$$
where the random variables are i.i.d. (and with mean $0$). So the sequence $(\Theta_N(0,0))_N$ is tight. 
Moreover, we prove that there exists a constant $C>0$ and there exists a positive integer $N_0\in\mathbb{N}$ such that for every $(t,x), (t',x') \in [0,T]\times I$, $N \ge N_0$,
\begin{equation*}    
\int_{I^\infty}\Big|
    \frac{1}{\sqrt{N}}\sum_{k=1}^N\theta_k(x^\infty)(t,x) - \theta_k(x^\infty)(t',x')\Big|^4
    \lambda^\infty(dx^\infty)
    \leq 
    C\left(|t-t'|^4 + |x-x'|^4\right).
\end{equation*}
As a consequence of~\cite[Corollary 16.9]{MR1876169}, we will obtain that the sequence of $(\Theta_N)_N$ is tight and the limiting processes are $\lambda^\infty$-a.s. locally H\"older continuous with exponent $1/2$. 

We now prove the claim. Let $(t,x), (t',x') \in [0,T]\times I$, $N\in\mathbb{N}$. Letting $\mathbf{T}_k := \theta_k(i^\infty)(t,x) - \theta_k(x^\infty)(t',x')$, we note that
\begin{align*}
    &\int_{I^N}\Big|
    \frac{1}{\sqrt{N}}\sum_{k=1}^N\theta_k(x^\infty)(t,x) - \theta_k(x^\infty)(t',x')\Big|^4
    \otimes_{k=1}^N\lambda(dx_k)
    \\
    &= \frac{1}{N^2} \int_{I^N} \left[\sum_{k_1,k_2,k_3,k_4=1}^N  \mathbf{T}_{k_1} \mathbf{T}_{k_2} \mathbf{T}_{k_3} \mathbf{T}_{k_4} \right] \otimes_{k=1}^N\lambda(dx_k)
    \\
    &= \underbrace{\frac{1}{N^2} \int_{I^N} \sum_{k=1}^N  \mathbf{T}_{k}^4 \otimes_{k=1}^N\lambda(dx_k)}_{\hbox{$\to 0 $ as  $N \to +\infty$ }}
    + \frac{1}{N^2} \sum_{\substack{k,k'=1 \\ k\neq k'}}^N  \int_{I^N} \left[\mathbf{T}_{k}^2 \mathbf{T}_{k'}^2 \right] \otimes_{k=1}^N\lambda(dx_k)
    \\
    &\qquad + \frac{1}{N^2}  \sum_{k_1=1}^N \underbrace{\int_{I^N}  \mathbf{T}_{k_1} \otimes_{k=1}^N\lambda(dx_k)}_{\hbox{ $= 0$ }} \sum_{\substack{k_2,k_3,k_4=1 \\ k_2,k_3,k_4 \neq k_1}}^N \int_{I^N} \left[\mathbf{T}_{k_2} \mathbf{T}_{k_3} \mathbf{T}_{k_4} \right] \otimes_{k=1}^N\lambda(dx_k).
\end{align*}
The first term can be made arbitrarily small by taking $N$ large enough. The third term is zero. To bound the second term from above, we observe that:
\begin{align*}
    \mathbf{T}_{k}^2 
    &=
    \big(\theta_k(x^\infty)(t,x) - \theta_k(x^\infty)(t',x')\big)^2
    \le C \Big( 
    |w(x,x_k) - w(x',x_k)|^2 \mathbb{E}[X^{x_k}_t]^2 
    \\
    & \quad + w(x',x_k)^2 \left| \mathbb{E}[X^{x_k}_t] - \mathbb{E}[X^{x_k}_{t'}] \right|^2
    + |m(t,x) - m(t',x')|^2 
    \Big)
    \\
    &\le C \Big(
    |x - x'|^2 
    +  \left| t - t' \right|^2
    + |m(t,x) - m(t',x')|^2 
    \Big).
\end{align*}
Furthermore, for the last term, we have by definition of $m$,
\begin{align*}
    &|m(t,x) - m(t',x')|^2 
    \\
    &\le \int_I\left( \left| w(x,y) - w(x',y) \right|^2 \mathbb{E}[X^y_t]^2 +  w(x',y)^2 \left|\mathbb{E}[X^y_t] - \mathbb{E}[X^y_{t'}]\right|^2 \right) \lambda(dy)
    \\
    &\le C \left(\left| x - x' \right|^2  +  \left|t - t'\right|^2 \right).
\end{align*}
Hence:
$$
    \frac{1}{N^2} \sum_{\substack{k,k'=1 \\ k\neq k'}}^N  \int_{I^N} \left[\mathbf{T}_{k}^2 \mathbf{T}_{k'}^2 \right] \otimes_{k=1}^N\lambda(di_k)
    \leq
    C \left(\left| x - x' \right|^4  +  \left|t - t'\right|^4 \right).
$$
\end{proof}

By Prokhorov's theorem (see \textit{e.g.}~\cite[Theorem 16.3]{MR1876169}), this yields relative compactness in distribution of $(\Theta_N)_N$. Moreover, the finite-dimensional distributions converge. Indeed, for any $n \in \mathbb{N}$ and any $r_1,\dots,r_n \in [0,T] \times I$, we have that the sequence $(\Theta_N(r_1), \dots, \Theta_N(r_n))_{N=1,2,\dots}$ converges in distribution by the standard central limit theorem. Hence, by~\cite[Lemma 16.2]{MR1876169}, $(\Theta_N)_N$ converges in distribution. By definition, this means that $\theta_1(x^\infty):(t,x) \mapsto w(x,x_1)\mathbb{E}[X^{x_1}_t]-m(t,x)$ satisfies CLT (see~\cite[Section 10.1]{ledoux2013probability}). Note that $\theta_k,k=2,3,\dots,$ are independent copies of $\theta_1$. Thus, $(\Theta_N)_N$ satisfies a Law of the Iterated Logarithm (see \textit{e.g.}~\citep[Theorem 10.12]{ledoux2013probability}). More precisely, we obtain the following.  

\begin{lemma}
\label{lemma:clt-gives-lil}
There exists a constant $C$ such that
\begin{equation*}
    \lambda^\infty\left(\underset{N\rightarrow\infty}{\lim\sup}\ \frac{\|\theta_1 + \dots + \theta_N\|_\infty}{\sqrt{2N\log\log N}} = C\right) = 1.
\end{equation*}
\end{lemma}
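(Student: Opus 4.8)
The plan is to recognize the claim as an instance of the Law of the Iterated Logarithm for random variables valued in the separable Banach space $B := C([0,T]\times I)$ equipped with the supremum norm $\|\cdot\|_\infty$, and to invoke the characterization of the (compact) LIL of Ledoux and Talagrand \citep[Thm. 10.12]{ledoux2013probability}. The starting observation is that the $\theta_k$ form an i.i.d.\ sequence of centered $B$-valued random variables on $(I^\infty,\mathcal{I}^\infty,\lambda^\infty)$: centering holds because $m(t,x)$ is by definition the $\lambda$-mean of $(t,x)\mapsto w(x,x_k)\mathbb{E}[X^{x_k}_t]$, and the $\theta_k$, $k\geq 2$, are independent copies of $\theta_1$. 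I would first record that $B$ is separable, since $[0,T]\times I$ is a compact metric space, and that $\theta_1$ is a genuine $B$-valued (Borel, equivalently strongly measurable) random variable: its sample paths $(t,x)\mapsto w(x,x_1)\mathbb{E}[X^{x_1}_t]-m(t,x)$ are continuous by the joint-continuity estimates already established in the tightness lemma above (continuity of $x\mapsto w(x,x_1)$ under Assumption~\ref{cond:poc-and-cont}, continuity of $t\mapsto\mathbb{E}[X^{x_1}_t]$, and the Lipschitz bound on $m$).

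Next I would verify the two hypotheses of the cited theorem. The central limit property of $\theta_1$ in $B$ has already been obtained in the discussion preceding the statement, via tightness (Kolmogorov--Chentsov) together with convergence of the finite-dimensional distributions. The remaining hypothesis is the integrability condition $\mathbb{E}\big[\|\theta_1\|_\infty^2/\log\log(\|\theta_1\|_\infty\vee e^e)\big]<\infty$, and this is immediate: since the graphon is bounded by $1$ and, by the uniform bound \eqref{lemma:uniform-bound-over-I-of-X}, $\sup_{x\in I}|\mathbb{E}[X^x_t]|\leq\sqrt{C(T,w)}$ uniformly in $t$, both $w(x,x_1)\mathbb{E}[X^{x_1}_t]$ and $m(t,x)$ are bounded by a deterministic constant, so $\|\theta_1\|_\infty$ is uniformly bounded $\lambda^\infty$-a.s. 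A bounded random variable trivially satisfies the moment condition.

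With both hypotheses in hand, \citep[Thm. 10.12]{ledoux2013probability} yields that $\theta_1$ satisfies the compact LIL in $B$: the sequence $\big(\frac{1}{\sqrt{2N\log\log N}}\sum_{k=1}^N\theta_k\big)_N$ is $\lambda^\infty$-a.s.\ relatively compact in $B$ with cluster set $\lambda^\infty$-a.s.\ equal to a fixed compact set $K$, the unit ball of the reproducing-kernel Hilbert space associated with the covariance of $\theta_1$. Passing to norms then gives $\limsup_N\|\theta_1+\dots+\theta_N\|_\infty/\sqrt{2N\log\log N}=\sup_{h\in K}\|h\|_\infty=:C$, a deterministic constant (equal to the weak variance $\big(\sup_{\|f\|_{B^*}\leq 1}\mathbb{E}[\langle f,\theta_1\rangle^2]\big)^{1/2}$), which is exactly the assertion. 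If one prefers a self-contained justification of constancy, one can add that this $\limsup$ is a tail functional of the i.i.d.\ sequence $(\theta_k)$, hence $\lambda^\infty$-a.s.\ constant by the Hewitt--Savage zero--one law, with the cited theorem identifying its value.

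I do not anticipate a genuine obstacle, since the substantive work --- the CLT via tightness --- is already done. The only points requiring care are (i) ensuring that the \emph{exact} value of the $\limsup$, and not merely its finiteness, is delivered, which requires invoking the compact form of the LIL rather than the weaker bounded LIL, and (ii) checking the precise integrability hypothesis of the cited theorem, which collapses to triviality once the uniform boundedness of $\theta_1$ is noted.
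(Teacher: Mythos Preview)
Your proposal is correct and follows essentially the same approach as the paper: verify the integrability hypothesis of \citep[Theorem 10.12]{ledoux2013probability} via the uniform boundedness of $\|\theta_1\|_\infty$ coming from \eqref{lemma:uniform-bound-over-I-of-X}, combine this with the CLT property already established just before the lemma, and conclude the LIL in the Banach space $C([0,T]\times I)$. Your write-up is in fact more careful than the paper's --- you explicitly address separability, strong measurability of $\theta_1$, and the identification of the constant via the compact LIL --- but the substance is the same.
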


\begin{proof}
Using~\eqref{lemma:uniform-bound-over-I-of-X}, we have that
\begin{align*}
\|\theta_N\|^2_\infty 
&=
\sup_{(t,x)\in[0,T]\times I}|w(x,x_N)\mathbb{E}[X^{x_N}_t]-m(t,x)|^2
\\
&\leq
C(w)\left(\mathbb{E}[\|X^{x_N}\|_\infty^2]+\mathbb{E}^\boxtimes[\|X\|_\infty^2]\right)
\leq
C(w,T),
\end{align*}
where $C(w,T) > 0$ is a finite constant depending only on the graphon $w$ and $T$. Let $Lt := \max (1,\log t)$ for $t\geq 0$. Using the uniform bound derived above, we see that
\begin{equation*}
    \int_{I^\infty}\left(\frac{\|\theta_N\|^2_\infty}{LL\|\theta_N\|_\infty}\right)d\lambda^\infty(x^\infty) \leq 
    \int_{I^\infty}\|\theta_N\|^2_\infty d\lambda^\infty(x^\infty) < \infty.
\end{equation*}
The claim now follows from the Law of Iterated Logarithms in Banach spaces, see, \textit{e.g.}, \citep[Theorem 10.12]{ledoux2013probability}.
\end{proof}

In other words, Lemma~\hyperref[lemma:clt-gives-lil]{\ref{lemma:clt-gives-lil}} says that, for $\lambda^\infty$-a.e. $x^\infty\in I^\infty$: 
\begin{equation*}
    \forall \varepsilon > 0,\ \exists N_\varepsilon(x^\infty),\  \forall N\geq N_\varepsilon (x^\infty),\ \ \| \theta_1 + \dots + \theta_N\|_\infty \leq (C+\varepsilon)\sqrt{2N\log \log N},
\end{equation*}
and we have for $\lambda^\infty$-a.e. $x^\infty\in I^\infty$, for all $N \geq N_\varepsilon(x^\infty)$
\begin{equation*}
    \sup_{(t,x)\in[0,T]\times I} \Big|\frac{1}{N}\sum_{k=1}^N \left(w(x,x_k)\mathbb{E}[X^{x_k}_t] - m(t,x)\right)\Big|^2 
    \leq
    \frac{(C+\varepsilon)^2\log\log N}{N}.
\end{equation*}
The last statement also holds a.s. in $(I^\infty, \bar{\mathcal{I}}^\infty,\bar\lambda^\infty)$, see \citep[Section 6]{hammond2020monte}.

\subsection{Proof of Proposition~\ref{prop:eps-eq}}

We first prove the convergence claim. Recall that the two equilibria are linear functions of state, costate, and aggregate, so the propagation of chaos from Proposition~\ref{prop:poc-conv-rate} immediately yields
\begin{equation*}
    \max_{1\leq k\leq N}\mathbb{E}\Big[\int_0^T|\hat\alpha^{k,N}_t - \hat\alpha^{x_k}_t|^2dt\Big] \leq \varepsilon_N^2,\quad N\geq \underline{N},\ \bar\lambda^\infty\text{-a.s.}
\end{equation*}
Moving on to the approximation claim, let $\hat{\alpha}^{-k,N} := (\hat{\alpha}^{1,N},\dots, \hat{\alpha}^{k-1,N},\hat{\alpha}^{k+1,N},\dots, \hat{\alpha}^{N,N})$.
Since $(\hat{\alpha}^{1,N},\dots, \hat\alpha^{N,N})$ is a Nash equilibrium for the $N$-player game
\begin{align*}
    &J^{k,N}(\hat{\alpha}^{x_k}; \bar{\alpha}^{-k,N}) - J^{k,N}(\beta; \bar{\alpha}^{-k,N}) 
    \\
    &\leq |J^{k,N}(\hat\alpha^{x_k}; \bar\alpha^{-k,N}) - J^{k,N}(\hat\alpha^{k,N}; \hat\alpha^{-k,N})| + |J^{k,N}(\beta; \bar\alpha^{-k,N}) - J^{k,N}(\beta; \hat\alpha^{-k,N})|.
\end{align*}
Let $\beta$ be an admissible control for player $1$ in the $N$-player game. Let $(\bar X^{k,N,\beta})_{k=1}^N$ be the player states when player $1$ is using $\beta$ and the others are using the graphon game equilibrium control
    \begin{align*}
        &d\bar X^{1,N,\beta}_t = \Big(a(x_k)\bar X^{1,N,\beta}_t + b(x_k)\beta_t + c(x_k)\frac{1}{N}\sum_{h=1}^Nw(x_k,x_h)\bar X^{h,N,\beta}_t\Big)dt + dB^{x_1}_t,
        \\
        &d\bar X^{k,N,\beta}_t = \Big(a(x_k)\bar X^{k,N,\beta}_t + b(x_k)\hat \alpha^{x_k}_t + c(x_k)\frac{1}{N}\sum_{h=1}^Nw(x_k,x_h)\bar X^{h,N,\beta}_t\Big)dt + dB^{x_k}_t,
        \\ 
        &\hat X^{1,N,\beta}_0 = \xi^{x_1},\quad \bar X^{k,N,\beta}_0 = \xi^{x_k},\quad k\geq 2.
    \end{align*}
Let $(\hat X^{k,N,\beta})_{k=1}^N$ be the player states when player $1$ is using $\beta$ and the others are using the $N$-player game equilibrium control
\begin{align*}
        &d\hat X^{1,N,\beta}_t = \Big(a(x_k)\hat X^{1,N,\beta}_t + b(x_k)\beta_t + c(x_k)\frac{1}{N}\sum_{h=1}^Nw(x_k,x_h)\hat X^{h,N,\beta}_t\Big)dt + dB^{x_1}_t,
        \\
        &d\hat X^{k,N,\beta}_t = \Big(a(x_k)\hat X^{k,N,\beta}_t + b(x_k)\hat\alpha^{k,N}_t + c(x_k)\frac{1}{N}\sum_{h=1}^Nw(x_k,x_h)\hat X^{h,N,\beta}_t\Big)dt + dB^{x_k}_t, 
        \\  
        &\hat X^{1,N,\beta}_0 = \xi^{x_1},\quad \hat X^{k,N,\beta}_0 = \xi^{x_k},\quad k\geq 2.
\end{align*}
We have for any admissible $\beta$ and $\bar\lambda^\infty$-a.e. $x^\infty\in I^\infty$ that
\begin{align*}
    &|J^{1,N}(\beta; \bar\alpha^{-1,N}) - J^{1,N}(\beta; \hat\alpha^{-1,N})|
    \\
    &\leq
    \mathbb{E}\Big[\int_0^T\big|f^{x_1}(\bar X^{1,N,\beta}_t,\beta_t, \bar Z^{1,N,\beta}_t) - f^{x_1}(\hat X^{1,N,\beta}_t, \beta_t, \hat Z^{1,N,\beta}_t)\big|dt\Big] 
    \\
    &\qquad + \mathbb{E}\Big[\big|h^{x_1}(\bar X^{1,N,\beta}_T, \bar Z^{1,N,\beta}_T) - h^{x_1}(\hat X^{1,N,\beta}_T, \hat Z^{1,N,\beta}_T)\big|\Big]
    \\
    &\leq C \max_{1\leq k\leq N}\mathbb{E}\Big[\int_0^T\big|\hat\alpha^{x_k}_t - \hat\alpha^{k,N}_t\big|^2dt\Big]^{1/2} \leq \varepsilon_N,\quad N\geq \underline{N}(x^\infty).
\end{align*}
In the same way we can perturb the other players' actions and that concludes the proof.

\subsection{Conditions for well-posedness of the time-varying Riccati equation \eqref{eq:riccati_for_pi}}

Let 
\begin{equation*}
    \mathbb{A}_t := \Gamma_{Z,2} - \Gamma_{Z,1}\eta_t,
    \ 
    \mathbb{B}^k_t := -\Gamma_{12}\eta_t -\frac{1}{2}(\Gamma_{11} - \Gamma_{22} + \Gamma_{Z,1}\lambda_k), 
    \ 
    \mathbb{C}^k :=  -\Gamma_{12}\lambda_k, 
    \  
    t\in [0,T].
\end{equation*}
Set $\mathbb{D}^k := (\mathbb{B}^k_t)^2 + \mathbb{C}^k \mathbb{A}_t - \dot{\mathbb{B}}^k_t$. Note that $\mathbb{D}^k$ does not depend on $t$. Indeed, using the ODE satisfied by $\eta$ (\textit{i.e.}, ~\eqref{eq:the_first_eta-zeta} but with constant coefficients) and the identity $\Gamma_{11} = -\Gamma_{22}$, we have
\begin{align*}
    \mathbb{D}^k
    &= 
    \frac{1}{4}(2\Gamma_{11} + \Gamma_{Z,1}\lambda_k)^2 -\Gamma_{12}\lambda_k\Gamma_{Z,2} + \Gamma_{12} \Gamma_{21}.
\end{align*} 
Last, we let:
$$
    \mathbb{F}^k := \frac{\mathbb{B}^k_T + \mathbb{C}^k \Gamma_{T,2} + \sqrt{\mathbb{D}^k}}{\mathbb{B}^k_T + \mathbb{C}^k \Gamma_{T,2} - \sqrt{\mathbb{D}^k}}.
$$

\begin{proposition}
    Assume $\mathbb{D}^k \ge 0$ and,  for all $t \in [0,T]$,  $\mathbb{F}^k e^{\sqrt{\mathbb{D}^k}t} - e^{-\sqrt{\mathbb{D}^k}t} \neq 0$. 
    Then the Riccati equation~\eqref{eq:riccati_for_pi}  has a unique solution.
\end{proposition}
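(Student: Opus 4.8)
The plan is to linearize the scalar Riccati equation \eqref{eq:riccati_for_pi} by the classical substitution that turns a Riccati ODE into a second-order linear ODE, and then to exploit the fact --- recorded just above the statement --- that the discriminant-type quantity $\mathbb{D}^k$ is \emph{independent of} $t$. This is precisely what renders the linearized equation solvable in closed form and is the structural reason the explicit quantity $\mathbb{F}^k$ controls well-posedness. Writing $\pi=\pi^k$, $\mathbb{A}_t$, $\mathbb{B}_t=\mathbb{B}^k_t$, $\mathbb{C}=\mathbb{C}^k$, $\mathbb{D}=\mathbb{D}^k$, equation \eqref{eq:riccati_for_pi} reads $\dot\pi_t=\mathbb{C}\pi_t^2+2\mathbb{B}_t\pi_t-\mathbb{A}_t$ with terminal datum $\pi_T=\Gamma_{T,2}$. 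I may assume $\mathbb{C}\neq 0$; indeed, when $\mathbb{C}=\lambda_k=0$ the equation is linear in $\pi$ with bounded continuous coefficients and is globally well-posed by elementary ODE theory.

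First I would set $\pi_t=-\dot u_t/(\mathbb{C}\,u_t)$. Since $\mathbb{C}$ is constant, a direct differentiation shows $\pi$ solves the Riccati equation exactly where $u$ does not vanish and $u$ solves the linear equation $\ddot u_t-2\mathbb{B}_t\dot u_t-\mathbb{C}\mathbb{A}_t u_t=0$. Next I would remove the first-order term via the integrating factor $u_t=\exp\!\big(\int_0^t\mathbb{B}_s\,ds\big)v_t$; a short computation collapses the drift and curvature contributions and leaves $\ddot v_t=(\mathbb{B}_t^2+\mathbb{C}\mathbb{A}_t-\dot{\mathbb{B}}_t)\,v_t=\mathbb{D}\,v_t$, which is exactly the constant $\mathbb{D}$ from the definition above. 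Because $\mathbb{D}\ge 0$ by hypothesis, this constant-coefficient equation has general solution $v_t=c_1e^{\sqrt{\mathbb{D}}\,t}+c_2e^{-\sqrt{\mathbb{D}}\,t}$ (and $v_t=c_1+c_2t$ in the borderline case $\mathbb{D}=0$); moreover $\exp(\int_0^t\mathbb{B})>0$ so $u$ and $v$ share the same zero set.

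I would then impose the terminal condition. Rewriting $\pi_t=-\mathbb{C}^{-1}\big(\dot v_t/v_t+\mathbb{B}_t\big)$ and demanding $\pi_T=\Gamma_{T,2}$ forces $\dot v_T/v_T=-(\mathbb{B}_T+\mathbb{C}\Gamma_{T,2})$, which fixes the ratio $c_2/c_1$ (the overall scale of $v$ being irrelevant, as it cancels in $\pi$). A short bookkeeping with this ratio and the definition of $\mathbb{F}^k$ shows that $v$ (equivalently $u$) is non-vanishing on $[0,T]$ \emph{if and only if} $\mathbb{F}^k e^{\sqrt{\mathbb{D}}\,t}-e^{-\sqrt{\mathbb{D}}\,t}\neq 0$ for all $t\in[0,T]$, the stated hypothesis. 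Under this assumption $\pi_t=-\dot u_t/(\mathbb{C}u_t)$ is a well-defined $C^1$ function on the whole interval, and by construction it solves \eqref{eq:riccati_for_pi} with $\pi_T=\Gamma_{T,2}$; this gives existence of a non-blow-up solution.

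For uniqueness I would argue directly on the Riccati equation: its right-hand side is a degree-two polynomial in $\pi$ with coefficients continuous in $t$, hence locally Lipschitz in $\pi$, so the terminal-value problem has a unique maximal solution by Picard--Lindel\"of. The only obstruction to this solution being defined on all of $[0,T]$ is a finite-time explosion, and the linearization identifies an explosion of $\pi$ with a zero of $u$; the non-vanishing hypothesis rules this out, so the maximal solution is global and coincides with the one constructed above. The step I expect to demand the most care is the terminal-condition bookkeeping that matches the ratio $c_2/c_1$ to the quotient defining $\mathbb{F}^k$ --- including the sign conventions and the degenerate case $\mathbb{D}=0$ --- since the zeros of $v$ and of $\mathbb{F}^k e^{\sqrt{\mathbb{D}}t}-e^{-\sqrt{\mathbb{D}}t}$ sit at different locations yet their non-vanishing on $[0,T]$ is equivalent. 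The genuinely substantive input, namely that $\mathbb{D}^k$ is constant in $t$ (resting on the $\eta$-ODE in \eqref{eq:the_first_eta-zeta} and the identity $\Gamma_{11}=-\Gamma_{22}$), is already established in the lines preceding the statement and can be invoked directly.
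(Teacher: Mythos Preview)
Your proof is correct and follows essentially the same route as the paper: both linearize \eqref{eq:riccati_for_pi} via the substitution $\pi^k_t=-\mathbb{C}^{-1}(\dot\theta_t/\theta_t+\mathbb{B}^k_t)$ to arrive at the constant-coefficient equation $\ddot\theta_t=\mathbb{D}^k\theta_t$, match the terminal datum to the ratio encoded by $\mathbb{F}^k$, and read off non-blow-up from the non-vanishing hypothesis. The only cosmetic difference is that the paper writes the explicit solution first and verifies, while you derive it, and for uniqueness the paper reverts the linearization for two putative solutions whereas you invoke Picard--Lindel\"of together with the absence of blow-up; both arguments are valid and equally short.
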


\begin{remark}
Note that we can rewrite $\mathbb{D}^k$ as:
\begin{align*}
    \mathbb{D}^k
    &= 
    \bigg(\Gamma_{11} -\frac{\Gamma_{12} \Gamma_{Z,2}}{\Gamma_{Z,1}} + \frac{1}{2}\Gamma_{Z,1} \lambda_k \bigg)^2 - \bigg(\Gamma_{11} -\frac{\Gamma_{12} \Gamma_{Z,2}}{\Gamma_{Z,1}}\bigg)^2 + (\Gamma_{11})^2 + \Gamma_{12}\Gamma_{21}.
\end{align*}
So, to ensure $\mathbb{D}^k \ge 0$ independently of the value of $\lambda_k$, a sufficient condition is:
\begin{equation*}
     (\Gamma_{11})^2 
     \ge  \bigg(\Gamma_{11} -\frac{\Gamma_{12}\Gamma_{Z,2}}{\Gamma_{Z,1}}\bigg)^2 - \Gamma_{12} \Gamma_{21}.
\end{equation*}
\end{remark}

\begin{proof}
\textbf{Existence: } We first consider the ODE with the mixed initial condition 
\begin{equation}
\label{eq:riccati_for_nu}
    \ddot{\nu}^k_t 
    - \mathbb{D}^k \nu^k_t = 0,\quad t\in [0,T],\qquad \nu^k_0 (\mathbb{B}^k_T + \mathbb{C}^k \Gamma_{T,2}) - \dot{\nu}^k_0 = 0.
\end{equation}
A solution is given by $\nu^k_t = \mathbb{F}^k e^{\sqrt{\mathbb{D}^k}t} -  e^{-\sqrt{\mathbb{D}^k}t}$. Let $\theta^k_t = \nu^k_{T-t}$, for $t \in [0,T]$. It solves the ODE  with the mixed terminal condition:
\begin{align*}
    \ddot{\theta}^k_t 
    = 
    \mathbb{D}^k \theta^k_t,\quad t\in [0,T],\qquad \theta^k_T (\mathbb{B}^k_T + \mathbb{C}^k \Gamma_{T,2}) + \dot{\theta}^k_T = 0.
\end{align*}
To conclude, let
\begin{equation*}
    \pi^k_t = -\frac{1}{\mathbb{C}^k}\bigg( \frac{\dot{\theta}^k_t}{\theta^k_t} + \mathbb{B}^k_t \bigg),
    \qquad
    \dot{\pi}^k_t = -\frac{1}{\mathbb{C}^k}\bigg( \frac{\ddot{\theta}^k_t}{\theta^k_t} - \bigg(\frac{\dot{\theta}^k_t}{\theta^k_t}\bigg)^2 + \dot{\mathbb{B}}^k_t \bigg), \qquad t \in [0,T].
\end{equation*}
Then it can be checked that $\pi^k_t$ solves
\begin{equation*}
    \dot{\pi}^k_t 
    = 
     - \mathbb{A}_t + 2 \mathbb{B}^k_t \pi^k_t + \mathbb{C}^k (\pi^k_t)^2,
     \quad
    \pi^k_T 
    =
    [C_h]_{12},
\end{equation*}
which is equivalent to~\eqref{eq:riccati_for_pi}.

\textbf{Uniqueness: } Let us consider $\pi^k$ and $\tilde\pi^k$ solving~\eqref{eq:riccati_for_pi}. Reverting the above change of variables yields solutions $\nu^k$ and $\tilde\nu^k$ to~\eqref{eq:riccati_for_nu} such that $\tilde\nu^k_t \neq 0$ and $\nu^k_t \neq 0$ for all $t \in [0,T]$. For any such solutions, there exist constants $C_1, C_2, \tilde C_1, \tilde C_2$ such that
\begin{equation*}
    \nu^k_t = C_1 e^{\sqrt{\mathbb{D}^k}t} + C_2 e^{-\sqrt{\mathbb{D}^k}t},
    \qquad
    \tilde\nu^k_t = \tilde C_1 e^{\sqrt{\mathbb{D}^k}t} + \tilde C_2 e^{-\sqrt{\mathbb{D}^k}t}.
\end{equation*}    
Due to~\eqref{eq:riccati_for_nu}, we have:
\begin{align*}
    &C_1 (\mathbb{B}^k_T + \mathbb{C}^k \Gamma_{T,2} - \sqrt{\mathbb{D}^k}) + C_2 (\mathbb{B}^k_T + \mathbb{C}^k \Gamma_{T,2} + \sqrt{\mathbb{D}^k}) = 0,
    \\
    &\tilde C_1 (\mathbb{B}^k_T + \mathbb{C}^k \Gamma_{T,2} - \sqrt{\mathbb{D}^k}) + \tilde C_2 (\mathbb{B}^k_T + \mathbb{C}^k \Gamma_{T,2} + \sqrt{\mathbb{D}^k}) = 0.
\end{align*}
As a consequence, we necessarily have $C_1/C_2 = \tilde C_1 / \tilde C_2 = - \mathbb{F}^k$.  We deduce that $\pi^k_t = \tilde\pi^k_t$ for all $t \in [0,T]$.

\end{proof}

\end{appendix}

\bibliographystyle{chicago}

\end{document}